\documentclass[12pt]{amsart}
\usepackage{amsmath, amssymb, mathrsfs, textcomp, colonequals,
  comment,soul}
\usepackage{epic}
\usepackage[shortlabels]{enumitem}
\usepackage{xypic}
\usepackage[backref]{hyperref}
\usepackage[alphabetic,backrefs,lite]{amsrefs}
\usepackage{amsthm}
\usepackage{thmtools}
\usepackage[usenames,dvipsnames]{color}
\usepackage{fullpage}
\usepackage{cleveref}
\usepackage{tikz}

\hypersetup{
  colorlinks   = true, 
  urlcolor     = blue, 
  linkcolor    = blue, 
  citecolor   = blue 
}

\numberwithin{equation}{section}
\allowdisplaybreaks[1]

\newtheorem{theorem}[equation]{Theorem}

\newtheorem{corollary}[equation]{Corollary}
\newtheorem{prop}[equation]{Proposition}
\newtheorem{lemma}[equation]{Lemma}

\theoremstyle{definition}
\newtheorem{assumption}[equation]{Assumption}
\newtheorem{properties}[equation]{Properties}
\newtheorem{remark}[equation]{Remark}
\newtheorem{notation}[equation]{Notation}
\newtheorem{defn}[equation]{Definition}

\newtheorem{example}[equation]{Example}

\setenumerate{itemsep=2pt}

\newcommand{\padma}[1]{{\color{Magenta} \sf $\clubsuit\clubsuit\clubsuit$ Padma: [#1]}}
\newcommand{\andrew}[1]{{\color{Green} \sf $\diamondsuit\diamondsuit\diamondsuit$ Andrew: [#1]}}
\newcommand{\connor}[1]{{\color{Orange} \sf $\diamondsuit\diamondsuit\diamondsuit$ Connor: [#1]}}

\newcommand{\ol}[1]{\overline{#1}}
\newcommand{\mc}[1]{\mathcal{#1}}

\newcommand{\Q}{\mathbb Q}
\newcommand{\rats}{\mathbb Q}
\newcommand{\Z}{\mathbb Z}
\newcommand{\ints}{\mathbb Z}
\newcommand{\N}{\mathbb N}

\renewcommand{\O}{\mathcal{O}}

\renewcommand{\P}{\mathbb P}
\newcommand{\proj}{\mathbb P}

\renewcommand{\phi}{\varphi}

\newcommand{\Spec}{\mathrm{Spec} \ }

\DeclareMathOperator{\len}{length}

\renewcommand{\c}[1]{\overline{#1}}
\newcommand{\F}{\mathcal{F}}

\renewcommand{\l}[2]{\len_{\O_K}(\mathcal{O}_{#1}/\mathcal{O}_{#2})}

\newcommand{\X}{\mathcal{X}}
\newcommand{\Y}{\mathcal{Y}}
\newcommand{\tx}[1]{\text{#1}}

\DeclareMathOperator{\Gal}{Gal}

\DeclareMathOperator{\GL}{GL}
\DeclareMathOperator{\divi}{div}
\DeclareMathOperator{\disc}{disc}
\DeclareMathOperator{\sep}{sep}

\DeclareMathOperator{\cha}{char}
\DeclareMathOperator{\Art}{Art}
\DeclareMathOperator{\Aut}{Aut}

\DeclareMathOperator{\lcm}{lcm}
\DeclareMathOperator{\db}{db}
\DeclareMathOperator{\Proj}{Proj}
\DeclareMathOperator{\rad}{rad}
\DeclareMathOperator{\red}{red}
\DeclareMathOperator{\Frac}{Frac}

\DeclareMathOperator{\cdc}{cdc}
\DeclareMathOperator{\cedc}{cedc}
\DeclareMathOperator{\HJL}{HJL}
\DeclareMathOperator{\chara}{char}
\DeclareMathOperator{\ord}{ord}
\DeclareMathOperator{\mult}{\mu}

\DeclareMathOperator{\Supp}{Supp}

\title[Conductor-discriminant inequality]{Conductor-discriminant
  inequality for tamely ramified cyclic covers I}

\author{Andrew Obus}
\address{Baruch College}
\curraddr{1 Bernard Baruch Way, New York, NY 10010, USA}
\email{andrewobus@gmail.com}
\thanks{The first and third authors were supported by NSF CAREER grant DMS-2047638.}

\author{Padmavathi Srinivasan}
\address{Boston University}
\curraddr{665 Commonwealth Avenue, Boston, MA 02215, USA}
\email{padmask@bu.edu}
\thanks{The second author was supported by National Science Foundation grant DMS-2401547.}

\author{Connor Stewart}
\address{Graduate Center, City University of New York}
\curraddr{365 5th Avenue, New York, NY 10016, USA}
\email{connor.stewart95@login.cuny.edu}

\subjclass[2010]{Primary: 11G20, 14H25, 14J17; Secondary: 13F30, 14B05}
\keywords{Artin conductor, minimal discriminant, superelliptic curve}

\date{\today}

\begin{document}

\maketitle

\begin{abstract}
We prove conductor-discriminant inequalities for all $\ints/n$-covers of $\proj^1$ defined over discretely valued
fields $K$ with excellent valuation ring $\mc{O}_K$ and perfect residue field of characteristic not dividing
$n$, modulo some calculations appearing in work \cite{Part2} of
the third author. Specifically,
when such a curve $X$ is given by $y^n = f(x)$ with $f(x) \in \mc{O}_K[x]$
and $n\mid\deg(f)$, and if $\mc{X}$ is its minimal regular model over
$\mc{O}_K$, then the negative of the Artin conductor of $\mc{X}$ is bounded
above by $(n-1)v_K(\disc(\rad f))$.  This is a direct generalization of
previous work of the first two authors on hyperelliptic curves,
which in turn generalized work of Ogg, Saito, Liu, and the second author. When $f$ is monic, this strengthens a result of Kohls stating that the conductor exponent of the Jacobian of such a curve is bounded above by $(n-1)v_K(\disc(\rad f))$.
\end{abstract}


\section{Introduction}
We prove conductor-discriminant inequalities for all $\ints/n$-covers
of $\proj^1$ defined over Henselian discretely valued
fields $K$ with excellent\footnote{The assumption that $K$ has excellent valuation ring is mild and holds in many common settings, e.g.,\,whenever $K$ is complete. We use this assumption for Lemmas \ref{lemma: local resolution is finite} and \ref{lem:resolveinorder} and for Proposition \ref{prop: non-negativity of cdc and existence of contractions}.} valuation ring and perfect residue field of characteristic not dividing
$n$. This generalizes a previous result of the first two authors
(\cite{OShyper_published}) covering the case $n = 2$, which in turn
generalized earlier work of Ogg, Saito, and Liu on genus $1$ and $2$
curves (\cite{saito2}, \cite{Li:cd}).

\subsection{Main theorem}\label{Smain}
Let $K$ be a Henselian discretely valued field with perfect residue field
$k$. Let $\mc{O}_K$ be the ring of integers of $K$. Assume that
$\mc{O}_K$ is excellent. Let $\nu_K \colon K \rightarrow \Z \cup \{ \infty
\}$ be the corresponding discrete valuation.  Fix a uniformizer
$\pi_K$ of $K$.  Let $X$ be a nice (i.e. smooth,
projective, geometrically integral) curve of genus $g \geq 1$ defined
over $K$.  Let $\mc{X}$ be a proper,
flat, regular $\mc{O}_K$-scheme with generic fiber $X$. The \emph{Artin conductor} associated to the model $\mc{X}$ is defined by
\[ \Art (\mc{X}/\mc{O}_K) =  \chi(\mc{X}_{\overline{K}}) - \chi(\mc{X}_{\overline{k}}) - \delta(X/K) ,\]
where $\chi$ is the Euler characteristic for the $\ell$-adic cohomology and $\delta(X/K)$ is the Swan conductor associated to the $\ell$-adic representation $\Gal (\overline{K}/K) \rightarrow \Aut_{\Q_\ell}
(H^1_{\mathrm{et}}(\mc{X}_{\overline{K}}, \Q_\ell))$ ($\ell \neq \cha k$). The Artin conductor is a measure of degeneracy of the model $\mc{X}$; it is a non-positive integer that is zero precisely when
$\mc{X}/\mc{O}_K$ is smooth or when $g=1$ and
$(\mc{X}_k)^{\mathrm{red}}$ is smooth. For instance, if $\mc{X}/\mc{O}_K$ is a regular, semistable model, then $-\Art(\mc{X}/\mc{O}_K)$ equals the number of singular points of the special fiber $\mc{X}_k$. 

In \cite[Proposition~1.1]{Bloch}, Bloch showed that the Artin conductor appears in the functional equation for the $\zeta$-function attached to a global proper flat regular $\mc{O}_K$-model $\mc{X}$ for a curve $X$ defined over a number field $K$, analogous to how the conductor exponent of the Jacobian of $X$ appears in the usual $\zeta$-function attached to $X$. Databases of curves such as the LMFDB \cite{LMFDB} order curves by the conductor exponent of their Jacobian, and this requires algorithms to compute or at least bound this invariant, which is a difficult problem in general.  In this paper, we describe an approach involving explicit regular models for $\ints/n$-covers of $\proj^1_K$, generalizing previous work of the first two authors when $n=2$. In some cases, this bound improves on known upper bounds for the conductor exponent computed using explicit semistable models (see \cite{DDMM} and \cite{BW_Glasgow} for explicit constructions of semistable models for hyperelliptic/superelliptic curves, and computation of related invariants, and \cite{Kohls} for bounds on the conductor exponent for superelliptic curves). 

The upper bound is in terms of a more easily computable measure of degeneracy $\Delta_{X/K}$ available to a $\ints/n$-cover $X$ of $\proj^1$ (see \Cref{Ddiscriminant} below). The invariant $\Delta_{X/K}$ detects degeneracy of the branch locus on
reduction from the defining equation of the cover. To define $\Delta_{X/K}$, we begin by fixing an integral Weierstrass equation for $X$, namely an affine equation for $X$ of the form $y^n=f(x)$ with $f(x) \in \mc{O}_K[x]$ that is $n$th power-free.

\begin{defn}\label{Dpolydisc}
Assume $f(x) \in \mc{O}_K[x]$ is $n$th power-free and non-constant. Let $\widetilde{f}_n(x_0, x_1) \colonequals x_0^df(x_1/x_0)$, where
$d = n\lceil \deg(f)/n \rceil$; this is the homogenization of
$f$, with extra $x_0$ factors if $n \nmid \deg(f)$.  Let $g \colonequals \rad(\widetilde{f}_n) \in \mc{O}_K[x]/\mc{O}_K^{\times}$, where the radical of a polynomial is the product of its distinct irreducible factors. The \emph{$n$-adapted discriminant} $\disc_n(f)$ of $f$ is defined to be $\disc(g) \in \mc{O}_K/\mc{O}_K^\times$,
where $\disc(g)$ is the homogeneous discriminant defined in \cite[Ch.\
12, (1.22),(1.29)]{GKZ}.
\end{defn}

Observe that $x_0$ divides $g$ as above precisely when $x=\infty$ is a ramification point of the cover $X \to \proj^1_K$. See \Cref{Lcollisions} for why $\disc_n(f)$ detects collisions between the components of the branch divisor of the $\ints/n$-cover upon reduction to $k$.

\begin{remark}\label{Rhyperelliptic}
If $n = 2$, then $\widetilde{f}_2 = \rad(\widetilde{f}_2) \mod \mc{O}_K^\times$ for $f$ as above, and Definition~\ref{Ddiscriminant} of
$\disc_2(f)$ reduces to the definition $\disc'(f)$ given in \cite{OShyper_published}. 
\end{remark}

\begin{defn}\label{Ddiscriminant} A \emph{minimal Weierstrass equation} is an equation for which the integer $\nu_K(\disc_n(f))$ is as small as possible amongst all integral Weierstrass equations for the curve $X$, and
the \emph{minimal discriminant} $\Delta_{X/K}$ of $X$ is
$\nu_K(\disc_n(f))$ for such an equation. \end{defn}

For a curve $X$ as above, let $\mc{X}$ be the normalization of the standard model
$\proj^1_{\mc{O}_K} \colonequals \Proj \mc{O}_K[x_0, x_1]$ of $\proj^1_K$ corresponding to the coordinate
$x \colonequals x_1/x_0$ in the function field $K(X)$. Then $\mc{X}$ is a smooth model for $X$ if and only if $\Delta_{X/K} = 0$ (see \Cref{Lcollisions}~\ref{Lnormalsmooth}).
Conversely, if $\mc{X}$ is a smooth proper model for a $\ints/n$-cover of $\proj^1$, then $\mc{X}$ arises as the normalization of $\proj^1_{\mc{O}_K}$ in $K(X)$, and $\nu_K(\disc_n(f)) = 0$ for a polynomial $f$ giving rise to an equation for the cover as $y^n=f(x)$ (see \Cref{Pquotsmoothmodel}).



The main theorem of the paper is the following:
\begin{theorem}[Conductor-discriminant inequality]\label{Tcdi}
Let $K$ be the fraction field of an excellent, Henselian\footnote{In fact, the Henselian hypothesis can be removed since the discriminant and Artin conductor are invariant under unramified base change and regular models satisfy \'{e}tale descent.} discrete
valuation ring $\mc{O}_K$
with perfect residue field of characteristic $p$ ($p=0$
is allowed), and let
$f \in \mc{O}_K[x]$ be a $n$th power-free polynomial.
Let $X$ be the cyclic cover of $\proj^1_K$  with affine 
equation $y^n=f(x)$.  Assume $p \nmid n$ and that the genus of $X$ is $\geq 1$. Then there exists a proper flat regular $\mc{O}_K$-model $\mc{X}_f$ of $X$ such that 
\begin{equation}\label{Econddiscineq} -\Art(\mc{X}_f/\mc{O}_K) \leq (n-1)\nu_K(\disc_n(f)). \end{equation}
\end{theorem}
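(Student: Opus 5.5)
We follow the strategy of \cite{OShyper_published} for $n=2$: build an explicit regular model $\mc{X}_f$ as the normalization of a suitable regular model $\mc{Y}$ of $\proj^1_K$ in $K(X)$, followed by a tame resolution of the remaining quotient singularities. We then compute $-\Art(\mc{X}_f/\mc{O}_K)$ component by component and compare the result with a parallel decomposition of $(n-1)\nu_K(\disc_n(f))$.

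\textbf{Reductions and model.} Since $p \nmid n$ and the residue field is perfect, the Swan conductor $\delta(X/K)$ vanishes. The quantities involved are also invariant under unramified base change and étale descent. Hence we may assume $K$ is strictly Henselian, and then complete.

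Write $B \subset \proj^1_K$ for the branch divisor, namely the roots of $g=\rad(\widetilde f_n)$. Take $\mc{Y}$ to be a minimal regular model of $\proj^1_K$ on which the closure of $B$ becomes horizontally regular with pairwise disjoint components. This can be obtained by a sequence of blowups governed by the Mac Lane valuations/clusters of roots of $g$, and it terminates by excellence (the finiteness lemmas cited earlier). Let $\mc{X}'$ be the normalization of $\mc{Y}$ in $K(X)$. It is tamely ramified over $\mc{Y}$ along the branch divisor plus those vertical components $\Gamma$ whose multiplicity data make the order of $f$ along $\Gamma$ nondivisible by $n$. Its singularities are toric (cyclic quotient) singularities at intersection points. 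Resolve them by Hirzebruch--Jung continued fractions to obtain $\mc{X}_f$; this is the $\HJL$ machinery.

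\textbf{Conductor computation.} With $\delta=0$, we have
$$-\Art = \chi(\mc{X}_{f,\bar k}) - \chi(X_{\bar K}) = \sum_{\text{components}} (\cdots),$$
which we express as $2g-2+\chi(\text{special fiber})$. By Riemann--Hurwitz on each component of $\mc{X}'_k$ over its image $\Gamma\subset\mc{Y}_k$, and by adding the chains from the Hirzebruch--Jung resolutions, we write $-\Art(\mc{X}_f)$ as a sum of local contributions. Each contribution is indexed by a blown-up component $\Gamma$ of $\mc{Y}_k$ (equivalently, a vertex in the cluster tree) and is determined by the number of branch points specializing to $\Gamma$ and to the points of $\Gamma$, together with $\gcd(n,\cdot)$ data of the multiplicities. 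On the other side, $\nu_K(\disc(g))$ decomposes as a sum over pairs of roots of the valuations of their differences. Grouping these by the cluster vertex where each pair separates gives local terms $\binom{m_\Gamma}{2}$-type quantities weighted by depth increments.

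\textbf{Main step and obstacle.} The theorem then reduces to a local inequality at each vertex: the conductor contribution of $\Gamma$ and of its chains is at most $(n-1)$ times its discriminant share. We would prove this by induction on the cluster tree, as in \cite{OShyper_published}. The base case is where all roots are equidistant, reduced by a coordinate change to $y^n = x^m+\pi^a(\dots)$, and it is computed explicitly.

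The hardest part is controlling the Hirzebruch--Jung chain lengths. These depend on continued fraction expansions of ratios involving $n$ and valuations, and can be long when $n$ is large with mixed gcd's. The plan is to bound the chain length by the sum of partial quotients, then show that this is dominated by $(n-1)$ times the valuation increment. If this bound fails in edge cases, such as branch points near $\infty$ or components where $n\mid$ the order of $f$, we would instead contract $(-1)$-curves, using the non-negativity and contraction proposition stated earlier. Some of these calculations are exactly the ones deferred to \cite{Part2}.
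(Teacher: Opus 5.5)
\textbf{The gap: the Swan conductor does not vanish.} Your first reduction claims $\delta(X/K)=0$ because $p\nmid n$, and this is false. Tameness of the cover $X\to\proj^1_K$ does not make the Galois action on $H^1_{\mathrm{et}}(X_{\ol K},\Q_\ell)$ tame. The branch points themselves, the roots $\alpha_i$ of the $f_i$, can generate wildly ramified extensions $K_i=K(\alpha_i)$ when $p\mid\deg f_i$. The paper's \Cref{Pswanconductor} gives $\delta(X/K)=\sum_i (n-\gcd(n,d_i))(\Delta_{K_i/K}-\deg f_i+1)$, which is positive as soon as one $K_i/K$ is wild.

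For example, take $n=2$, $p=3$ and $y^2=x^3-3$ over $\Q_3$.
\begin{itemize}
\item The polynomial is Eisenstein, so $\Delta_{K_1/K}=\nu_3(\disc(x^3-3))=5$ and $\delta=3$.
\item Also $\nu_K(\disc_2(f))=5$.
\item The minimal model has Kodaira type II, so $-\Art=\phi=5$.
\end{itemize}
The theorem is therefore an \emph{equality} here, while $\chi(\mc{X}_{\bar k})-\chi(X_{\bar K})=2$. What your accounting actually bounds is $-\Art-\delta$, which is strictly weaker than the theorem. In such examples there is no slack left to absorb the missing $\delta$, and your pairs-of-roots decomposition of $\nu_K(\disc g)$ has no term set aside to pay for it.

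\textbf{What is needed instead.} You need an explicit formula for $\delta$ together with a matching split of the discriminant.
\begin{itemize}
\item In the paper, the Swan term combines with the generic-fiber Euler characteristic to give the terms $(n-n/e)\Delta_{K_i/K}$ in \Cref{PartinNormalcalculation}.
\item Writing $\nu_K(\disc_n f)=\db_K(f)+\sum_i\Delta_{K_i/K}$ (\Cref{Ddiscbonus}) lets $(n-1)\sum_i\Delta_{K_i/K}$ cover these terms. The leftover $\sum_i(\gcd(n,d_i)-1)\Delta_{K_i/K}$ is nonnegative.
\item Only the discriminant bonus is then played against the special-fiber components. By \Cref{Edbexpression} it equals $2\len(\mc{O}_{\tilde D_0}/\mc{O}_{D_0})$ up to the $b$-correction, and it is spent through the drops $\mu_k(\mu_k-1)/2$ and $\mu_k(\mu_k-3)/2$ at each blowup.
\end{itemize}

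\textbf{A second problem: your model.} Making the horizontal part of $B$ regular with disjoint components does not make the full branch divisor simple normal crossings, because that divisor also contains the ramified vertical components. In \Cref{example: local n-resolution}, one blowup already makes the horizontal component regular. However, it is tangent to the ramified exceptional curve, so the normalization there is not a cyclic quotient singularity. You need the whole branch divisor to be simple normal crossings, as in \Cref{prop: structure of points upstairs}, before Hirzebruch--Jung resolution applies.

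Finally, your per-vertex inequality is genuinely false for the explicit model at some multiplicity-$2$ points (those giving rational double points, \Cref{example: cdc}). So the contraction to the minimal model in \Cref{prop: non-negativity of cdc and existence of contractions} is an essential step, not just a fallback.
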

We call (\ref{Econddiscineq}) the \textit{conductor-discriminant inequality for $f$}. 

  If $g(X) \geq 1$, let $\Art (X/K)$ denote the Artin conductor associated to the
\emph{minimal} proper regular model of $X$ over $\mc{O}_K$.  

\begin{corollary}\label{Tfinalthm}
Let $X \to \proj^1_K$ be a $\ints/n$-cover with $g(X) \geq 1$.
 Let $\Delta_{X/K}$ be the minimal discriminant of $X$
 and let $\Art(X/K)$ denote the Artin conductor of the minimal regular
 model of $X$. Then
 $-\Art(X/K) \leq (n-1) \Delta_{X/K}$.
\end{corollary}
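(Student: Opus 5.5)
The plan is to deduce \Cref{Tfinalthm} from \Cref{Tcdi} in two steps. First choose a good equation. Then compare the Artin conductor of an arbitrary proper regular model with that of the minimal one.

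\textbf{Choosing the equation.} Fix a minimal Weierstrass equation $y^n = f(x)$ for $X$. Here $f \in \mc{O}_K[x]$ is $n$th power-free and $\nu_K(\disc_n(f)) = \Delta_{X/K}$, as in \Cref{Ddiscriminant}. Such an equation exists because the values $\nu_K(\disc_n(f))$ are non-negative integers, so a minimum is attained. Since $g(X) \geq 1$ and $p \nmid n$, \Cref{Tcdi} applies and gives a proper flat regular $\mc{O}_K$-model $\mc{X}_f$ of $X$ with
\[ -\Art(\mc{X}_f/\mc{O}_K) \leq (n-1)\Delta_{X/K}. \]

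\textbf{Comparing models.} Let $\mc{X}_{\min}$ be the minimal proper regular model, which exists because $g(X) \geq 1$. By minimality, $\mc{X}_f$ dominates $\mc{X}_{\min}$: there is a birational morphism $\mc{X}_f \to \mc{X}_{\min}$ that is a finite composition of contractions of $(-1)$-curves in the special fiber. So it suffices to show that contracting one exceptional curve $E \cong \proj^1_{k'}$ does not decrease $-\Art$.

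The generic fibers agree, so $\chi(\mc{X}_{\overline{K}})$ and the Swan conductor $\delta(X/K)$ are unchanged. Only $\chi(\mc{X}_{\overline{k}})$ varies. Over $\overline{k}$, the curve $E$ becomes $[k':k]$ disjoint copies of $\proj^1$, and each copy is replaced by a point. On the geometric special fiber this is a blow-down of a smooth rational curve at a point. Since $E \cdot \mc{X}_k$ behaves as for an exceptional curve, each copy meets the rest of the special fiber in a single point, counted set-theoretically on the geometric side. Consequently $\chi(\mc{X}_{\overline{k}})$ drops by $\chi(\proj^1) - \chi(\mathrm{pt}) = 1$ per copy. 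Hence $\Art$ increases by $[k':k]$, that is, $-\Art$ decreases.

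To be careful, I would verify the Euler characteristic change with the excision sequence
\[ \chi(\mc{X}_{\overline{k}}) = \chi(\mc{X}_{\overline{k}} \setminus E_{\overline{k}}) + \chi(E_{\overline{k}}). \]
Combining the two steps gives $-\Art(X/K) \leq -\Art(\mc{X}_f/\mc{O}_K) \leq (n-1)\Delta_{X/K}$.

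\textbf{Main obstacle.} The delicate point is the monotonicity of $\Art$ under contraction. The proof needs $E_{\overline{k}}$ to be a disjoint union of smooth $\proj^1$'s, each meeting the rest of the fiber in exactly one geometric point. This follows from $E^2 = -[k':k]$ and $K_{\mc{X}} \cdot E < 0$ together with adjunction.

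Alternatively, one can cite the standard fact that $\Art(\mc{X}/\mc{O}_K) = -(\text{number of blow-ups}) + \Art(\mc{X}_{\min}/\mc{O}_K)$ in the form of Saito's formula $-\Art = -K \cdot K + \ldots$. I would rely on the direct Euler characteristic computation above rather than on that formula.
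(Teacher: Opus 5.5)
Your proof is correct and has the same shape as the paper's deduction. You take a minimal Weierstrass equation, apply the main theorem, and use that $-\Art$ can only decrease on passing to $\mc{X}_{\min}$. (The paper's \Cref{cor: conductor-discriminant inequality} already bounds $-\Art$ of $\mc{X}_{\min}$ for every $f$, so the corollary is that bound applied to a minimal equation.)

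The only real difference is the comparison step. The paper gets it from Liu's formula \eqref{Eliu}: since $\phi(X/K)$ does not depend on the model, $-\Art$ drops by exactly the number of geometric components contracted. This is how \Cref{prop: difference in Artin conductors} is proved. You instead compute directly from the definition of $\Art$ by excision. That route is more elementary, since it uses only the properties in \Cref{Peulerchar}, and it gives the same drop of $[k':k]$ per contraction. This is also the correct form of your ``standard fact'': each blow-up must be weighted by $[k':k]$, and the natural citation is \Cref{Partinconductor}.

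You should, however, delete the claim that each geometric copy of $E$ meets the rest of the special fiber in exactly one point, and your assertion that this is the delicate point and follows from adjunction. The claim is false in general. Suppose two reduced components cross transversally at a point, locally $\pi_K = uv$ with $u,v$ a regular system of parameters. Then the exceptional curve of the blowup there has multiplicity $2$ and self-intersection $-1$, yet meets the rest of the fiber in two points. Adjunction only gives $E \cong \P^1_{k'}$; it says nothing about how $E$ meets the other components.

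Fortunately the claim plays no role. If $x$ is the image of $E$, then $E$ is the exceptional divisor of the blowup at $x$, so $k(x) = k'$. Since the contraction is an isomorphism away from $E$, excision gives
\[ \chi(\mc{X}_{\overline{k}}) - \chi(\mc{X}'_{\overline{k}}) = \chi(E_{\overline{k}}) - \chi(x_{\overline{k}}) = 2[k':k] - [k':k] = [k':k], \]
regardless of the intersection pattern. With that claim removed, your argument is complete.
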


\begin{remark} Let $X$ be the curve with affine equation $y^n=x^n-\pi_K$. Then the normalization $\X$ of $\P^1_{\O_K}$ in $K(X)$ is the minimal regular model of $X$, and $-\Art(X/K) = (n-1)^2$ (see \Cref{example:optimal} for a proof). A direct computation shows that $\Delta_{X/K} = n-1 $. Hence $-\Art(X/K) = (n-1) \Delta_{X/K}$, showing that the factor $n-1$ multiplying $\Delta_{X/K}$ in Corollary \ref{Tfinalthm} is optimal for any $n$ prime to $p$.  
    

\end{remark}

In fact, a similar inequality to \ref{Tfinalthm} was obtained by Kohls in his thesis (\cite[Theorem 5.1.11]{Kohls}) with
$-\Art(X/K)$ replaced by the \emph{conductor exponent} $\phi(X/K)$. The invariant $\phi(X/K)$
is the conductor of the 
$\Gal(\overline{K}/K)$-representation $H^1_{\mathrm{et}}(X_{\overline{K}}, \Q_\ell)$  for any $\ell \neq \cha k$. It depends only on $X$ and not on any particular integral model for $X$.
The following result of Liu relates $\phi(X/K)$ to $\Art(\mc{X}/\mc{O}_K)$ for a proper flat regular $\mc{O}_K$-model
$\mc{X}$ of $X$:

\begin{prop}[{\cite[Proposition~1]{Li:cd}}]\label{Partinconductor}
  If $\mc{X}/\mc{O}_K$ is a proper flat regular model of $X$, then
  $$-\Art(\mc{X}/\mc{O}_K) = N - 1 + \phi(X/K),$$
where $N$ is the number of irreducible components of the (geometric)
special fiber $\mc{X}_{s}$ of $\mc{X}$.
\end{prop}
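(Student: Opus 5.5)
The plan is to unwind both sides using their cohomological definitions. The identity then reduces to a single comparison between the cohomology of the special fiber and the inertia invariants of the generic fiber cohomology. We may pass to the strict Henselization of $\mc{O}_K$, since all quantities involved (the Artin conductor, $N$ counted geometrically, and $\phi$) are unchanged by unramified base change. So assume $k$ is separably closed, and write $\mc{X}_s = \mc{X}_{\overline{k}}$.

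\textbf{Step 1: the generic fiber.} Since $X$ is a nice curve of genus $g$, we have $\chi(\mc{X}_{\overline{K}}) = 2 - 2g$.

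\textbf{Step 2: the special fiber.} The special fiber $\mc{X}_s$ is a connected proper curve; it is connected by Zariski's connectedness, since $X$ is geometrically integral. Its $\ell$-adic cohomology therefore satisfies $h^0 = 1$ and $h^2 = N$, the number of irreducible components. This gives $\chi(\mc{X}_s) = 1 - h^1(\mc{X}_s) + N$.

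\textbf{Step 3: the conductor.} Let $V = H^1_{\mathrm{et}}(X_{\overline{K}},\Q_\ell)$, with inertia $I$. The conductor exponent is $\phi(X/K) = (\dim V - \dim V^I) + \delta(X/K) = 2g - \dim V^I + \delta(X/K)$.

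\textbf{Step 4: combine.} Substituting into the definition of $\Art$ gives
\[ -\Art(\mc{X}/\mc{O}_K) = -(2-2g) + 1 - h^1(\mc{X}_s) + N + \delta(X/K) = N - 1 + \bigl(2g - h^1(\mc{X}_s) + \delta(X/K)\bigr). \]
Comparing with Step 3, the proposition is therefore equivalent to
\[ h^1(\mc{X}_s) = \dim V^I. \]

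\textbf{The main obstacle} is this equality, which is a local invariant cycle statement for the regular model. By proper base change, $H^1(\mc{X}_s) \cong H^1(\mc{X})$, and this group maps to $H^1(X_{\overline{K}})^I$ by specialization; I would show that this map is an isomorphism.

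For injectivity and surjectivity I would work with $\mu_{\ell^m}$-coefficients and the Kummer sequence on the regular scheme $\mc{X}$. This identifies $H^1(\mc{X},\mu_{\ell^m})$ with $\ell^m$-torsion in $\mathrm{Pic}(\mc{X})$ (since $H^0(\mc{X},\mathbb{G}_m)$ is $\mc{O}_K^\times$, which is $\ell$-divisible over a strictly Henselian base). Likewise $H^1(X_{K^{\mathrm{sh}}},\mu_{\ell^m})$ relates to $\mathrm{Pic}(X)[\ell^m]$.

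Regularity now enters in two ways. First, $\mathrm{Pic}(\mc{X}) \to \mathrm{Pic}(X)$ is surjective, because Weil divisors are Cartier on a regular scheme. Second, its kernel is generated by the components of $\mc{X}_s$. Passing to the limit, this yields $H^1(\mc{X}_s)\otimes\Q_\ell \cong (T_\ell \mathrm{Pic}^0)$-type invariants, matching $V^I$ by Raynaud's description of the Néron model of the Jacobian as $\mathrm{Pic}^0_{\mc{X}/\mc{O}_K}$ modulo the component contributions. Equivalently, one can cite Grothendieck's (SGA 7) theorem that $V^I$ is the $\ell$-adic Tate module of the Néron model special fiber, and identify the latter with $H^1(\mc{X}_s)$ via Raynaud.

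I expect the bookkeeping of the finite component-group and torsion contributions to be the delicate point, though they vanish after tensoring with $\Q_\ell$. Once the equality is established, Step 4 gives the formula directly.
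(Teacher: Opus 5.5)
Your proof is correct and follows the standard argument behind the result the paper quotes from Liu without proof. The Euler characteristic bookkeeping ($h^0=1$, $h^2=N$, $\phi(X/K)=2g-\dim V^I+\delta(X/K)$) reduces everything to $h^1(\mathcal{X}_{\bar k},\mathbb{Q}_\ell)=\dim V^I$, which Grothendieck's N\'eron-model description of $V^I$ and Raynaud's comparison with $\mathrm{Pic}^0$ of the special fiber then supply.

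The one step to phrase carefully is the Raynaud step. The isomorphism $\mathrm{Pic}^0_{\mathcal{X}/\mathcal{O}_K}\cong\mathcal{J}^0$ needs the gcd of the multiplicities of the components of $\mathcal{X}_s$ to be $1$, which fails for some regular models. In that case it is cleaner to finish your Kummer argument directly. Zariski's lemma (the intersection form on vertical divisors has rational kernel spanned by $\mathcal{X}_s$) shows that $T_\ell\mathrm{Pic}(\mathcal{X})\to T_\ell\mathrm{Pic}(X_{K^{\mathrm{sh}}})$ is injective with finite cokernel. Hochschild--Serre with $\mathrm{cd}_\ell(I)=1$ then identifies $T_\ell\mathrm{Pic}(X_{K^{\mathrm{sh}}})$ with $H^1(X_{\overline{K}},\mathbb{Z}_\ell(1))^I$.
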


Applying Proposition~\ref{Partinconductor} to the model $\mc{X}_f$ in Theorem~\ref{Tcdi} 
immediately yields the so-called ``conductor exponent-discriminant
inequality'' below, which is Kohl's result when $f$ is monic. 

\begin{corollary}[Conductor exponent-discriminant inequality]\label{Ccedi}
In the situation of Theorem~\ref{Tcdi}, we have $\phi(X/K) \leq (n-1)\nu_K(\disc_n(f))$.
\end{corollary}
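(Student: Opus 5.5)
The inequality follows directly from Proposition~\ref{Partinconductor} applied to the model supplied by Theorem~\ref{Tcdi}. Let $\mc{X}_f$ be the proper flat regular $\mc{O}_K$-model of $X$ from Theorem~\ref{Tcdi}, so that
\[
-\Art(\mc{X}_f/\mc{O}_K) \leq (n-1)\nu_K(\disc_n(f)).
\]
By Proposition~\ref{Partinconductor},
\[
-\Art(\mc{X}_f/\mc{O}_K) = N - 1 + \phi(X/K),
\]
where $N$ is the number of irreducible components of the geometric special fiber of $\mc{X}_f$. Combining the two gives
\[
\phi(X/K) = -\Art(\mc{X}_f/\mc{O}_K) - (N-1) \leq (n-1)\nu_K(\disc_n(f)) - (N-1).
\]

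It remains to see that $N - 1 \geq 0$. The special fiber of a proper flat model of the geometrically integral curve $X$ is nonempty, since flatness and properness over $\mc{O}_K$ force the image to be closed and to contain the generic point. It is also one-dimensional, so it has at least one irreducible component. Base change to $\overline{k}$ preserves this, so $N \geq 1$.

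Therefore $\phi(X/K) \leq (n-1)\nu_K(\disc_n(f))$. The only substantive input is Theorem~\ref{Tcdi}; everything else is routine bookkeeping. This corollary also needs no minimality, since $\phi(X/K)$ is independent of the chosen regular model.
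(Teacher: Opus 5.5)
Your argument is correct and is exactly the derivation the paper gives right after Proposition~\ref{Partinconductor}: apply Liu's formula to the model $\mc{X}_f$ of Theorem~\ref{Tcdi} and drop the nonnegative term $N-1$. The paper also gives a second proof (Corollary~\ref{cor: exponent-discriminant ineq}) that avoids Theorem~\ref{Tcdi}: it combines Proposition~\ref{prop: reduce to mult 2} with the easier bound $\cedc(y)\geq 0$ of Proposition~\ref{prop: non-negativity for cedc}, so it does not need the harder contraction result, Proposition~\ref{prop: non-negativity of cdc and existence of contractions}.
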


In Corollary~\ref{cor: exponent-discriminant ineq} we give an easier
proof of Corollary~\ref{Ccedi} without using Theorem~\ref{Tcdi}.

\subsection{Method of proof}\label{Smethod} Theorem~\ref{Tcdi} is fundamentally about bounding the number of
irreducible components of the geometric special fiber of a regular model of $X$.  The idea, similar
to that of \cite{OShyper_published} in the hyperelliptic case, is to
start by considering the normalization $\nu_0 \colon \mc{X}_0 \to
\proj^1_{\mc{O}_K}  \equalscolon \mc{Y}_0$ in $K(X)$.  The
singularities on $\mc{X}_0$ arise from singularities of the branch locus
of $\nu_0$, and the invariant $\disc_n(f)$ measures the ``overall badness'' of these singularities (see \Cref{Lcollisions} and \Cref{Pquotsmoothmodel}).  In the hyperelliptic case, one successively
blows up reduced closed points on $\proj^1_{\mc{O}_K}$, until the
cover $\nu_n \colon \mc{X}_n \to
 \mc{Y}_n$ obtained by normalizing the blown up model $\mc{Y}_n$ in $K(X)$ has a regular branch divisor. This guarantees that the resulting model $\mc{X}_n$ of $X$ is regular, and
keeping careful track of how much the branch locus singularities are
improved by blowing up at every step suffices to yield the conductor-discriminant inequality.  Essentially, there is a competition between how quickly one resolves the branch locus singularities and how many irreducible components are added to the special fiber of the model of $X$, and the bookkeeping can be arranged such that, at every step, the ``resolving the branch locus'' side wins.

In our situation, where the cover $X \to \proj^1_K$ is cyclic of
degree $n$, applying this method runs into several obstacles.  Most importantly, for $n >
3$, it is in general \emph{not possible} to find a regular model of
$\proj^1_K$ whose normalization in $K(X)$ is regular (see
\cite[Proposition 6.6]{LL} with $g = 0$ in that proposition).  In
particular, it is not always possible to start with $\proj^1_{\mc{O}_K}$ and blow up successive reduced closed
points to obtain a model whose normalization in $K(X)$ has regular
branch locus.  We attempt to solve this problem by blowing up until we
obtain a regular model $\mc{Y}$ of $\proj^1_K$
such that the branch locus of its normalization $\mc{X}$ in $K(X)$ is a \emph{simple normal crossings} divisor, that is,
consists of regular irreducible components that meet transversely.  It
is indeed possible to do this, and in
this case, every singularity of $\mc{X}$ is a \emph{tame cyclic quotient singularity}, which has an explicit resolution in terms of
Hirzebruch--Jung theory.

We play a similar game to that of \cite{OShyper_published}, comparing
the improvement in the the branch locus singularities to the
number of components added to the special fiber of the model of $X$ after every blow up.  In the case
of general $n$, quantifying this improvement is significantly more complicated,
especially when dealing with a branch locus singularity of
multiplicity $2$. These calculations are carried out in detail in 
\cite{Part2} by the third author, and we quote the main results as
Propositions~\ref{prop: non-negativity for cedc} and \ref{prop:
  non-negativity of cdc and existence of contractions}.  In the end, the regular model of $X$ resulting from
resolving all the tame cyclic quotient singularities in $\mc{X}$ usually satisfies Theorem~\ref{Tcdi}, but unfortunately the special fiber sometimes has too many irreducible components.

To resolve this final issue, we do not blow up
$\proj^1_{\mc{O}_K}$ all the way to the point where the branch locus
has normal crossings. Instead we stop in certain cases where the branch locus has a
multiplicity 2 point whose local neighborhood, when normalized in
$K(X)$, yields a \emph{rational double point} (or Du Val
singularity).  It turns out that applying the well-known resolutions of these
singularities from \cite{Lip69} is more efficient than the
process of resolving the branch locus to normal crossings and then
resolving the resulting tame cyclic quotient singularities, and this is good
enough to prove Theorem~\ref{Tcdi}.

\subsection{Other results}\label{Sindependent}

Along the way to our proof of Theorem~\ref{Tcdi}, we prove two other
results that may be of independent interest.

\begin{itemize}
  \item If $X \to \proj^1_K$ is an $\ints/n$-cover, we give a formula (see \Cref{Pswanconductor}) for the Swan
    conductor of $X$ in terms of its
    defining equation when $n$ is coprime to the residue
    characteristic.  This formula is similar to one of Spencer
    (\cite[Corollary~1.7, Remark~4.3]{Spencer}) for the Swan conductor
     of a curve $X$ equipped with an \emph{arbitrary} degree $n$ map
     to $\proj^1_K$, although Spencer has the slightly
    stronger assumption that $\chara k > n$. Our
    technique is different from that of \cite{Spencer}, and yields a
    significantly shorter proof in the cyclic case.
  \item In Proposition~\ref{prop: resolution of tame cyclic quotient
      singularities}, we explicitly resolve tame cyclic quotient singularities on
    arithmetic surfaces in a
    more general context than that given in, say, \cite{HN}, where the
    singularity is assumed to arise as a tame cyclic quotient of a local
    arithmetic surface that is regular
    \emph{with normal crossings}.  We prove that the standard
    Hirzebruch--Jung-inspired algorithm works without the assumption
    of normal crossings, and that it in fact gives the
    \emph{minimal} resolution. 
\end{itemize}

\subsection{Outline of paper}\label{Soutline}
In \S\ref{Sdiscgeometry}, we explain why the $n$-adapted discriminant detects collisions between the branch points of the $\ints/n$-cover on reduction. In \S\ref{Salgclosedreduction}, we explain why we may assume that the residue field $k$ is algebraically closed throughout without any loss of generality. In \S\ref{Sswan}, we
derive a formula for the Swan conductor. In \S\ref{Sdb}, we introduce the
\emph{discriminant bonus}, a quantity that is useful when we rephrase
our inequalities. In \S\ref{Sfurtherredn}, we further reduce to the case where the degree of the defining polynomial $f$ in the defining equation $y^n=f(x)$ for the cover is divisible by $n$. In \S\ref{Stame}, we adapt known results about explicit
resolution of tame cyclic quotient singularities to our situation. In \S\ref{SEulerchar} we derive an inclusion--exclusion formula for the Artin conductor using properties of the compactly supported \'{e}tale Euler characteristic function. In \S\ref{Srephrasing}, we
rewrite the conductor-discriminant (resp.\ conductor exponent-discriminant)
inequality for $X$ in terms of the positivity of a quantity called the
``conductor discriminant contribution'' (resp.\ ``conductor
exponent-discriminant contribution'') associated to multiplicity 2 isolated
singularities on particular models of $X$.  This quantity encodes the
``resolution versus components'' game alluded to in \S\ref{Smethod}.
Lastly, in
\S\ref{Scontribution}, we use results from \cite{Part2} to prove the
rephrased inequalities from \S\ref{Srephrasing}.


\section*{Notation and conventions}
Throughout, $K$ is a Henselian field with respect to a
discrete valuation $\nu_K$, and $\mc{O}_K$ is its valuation ring,
assumed to be excellent, and $k$ is its residue field, assumed to be perfect.  
We denote separable and algebraic
closures of $K$ by $K^{\sep} \subseteq \ol{K}$.
We fix a uniformizer $\pi_K$ of $\nu_K$
and normalize $\nu_K$ so that $\nu_K(\pi_K) = 1$.

For a finite separable field extension $L/K$, we let $\disc(L/K)$
denote the discriminant of the field extension $L/K$ and let
$\Delta_{L/K} \colonequals \nu_K(\disc(L/K))$. 


For an integral $K$-scheme or $\mc{O}_K$-scheme $S$, we denote the corresponding function
field by $K(S)$. If $\mc{Y} \rightarrow \mc{O}_K$ is an arithmetic
surface, an irreducible codimension 1 subscheme of $\mc{Y}$ is called
\emph{vertical} if it lies in a fiber of $\mc{Y} \to \mc{O}_K$, and
\emph{horizontal} otherwise.  For two divisors $D,D'$ on a normal arithmetic surface $\mc{Y}$, we say $D \leq D'$ or $D' \geq D$ if $D'-D$ is an effective Weil divisor on $\mc{Y}$. Let $f \in K(\mathcal{Y})$. We denote the
divisor of zeroes of $f$ by $\divi_0(f)$. 
If $E$ is a regular codimension $1$ point of $\mc{Y}$, we will denote
the corresponding discrete valuation on the function field of $\mc{Y}$
by $\nu_E$. If $P$ is a closed point on $\mc{Y}$, we denote the
corresponding local ring by $\mc{O}_{\mc{Y},P}$ and maximal ideal by
$\mathfrak{m}_{\mc{Y},P}$. By a singular point of an arithmetic
surface $\mc{Y}$, we mean a closed point $P$ such that $\len
\mathfrak{m}_{\mc{Y},P}/\mathfrak{m}_{\mc{Y},P}^2 \neq 2$. For a
Cartier divisor $\Delta$ on an arithmetic surface $\mc{Y}$ and a
closed point $y$ on $\Delta$, we let $\mult_y(\Delta)$ denote the multiplicity of the point $y$ on $\Delta$, i.e., if $f$ is a defining equation for $\Delta$ in $\mc{O}_{\mc{Y},P}$, then $\mult_y(\Delta) = \max \{r \ | \  f \in \mathfrak{m}_{\mc{Y},P}^r \}$. The branch divisor of a finite morphism of arithmetic surfaces is always considered as a reduced scheme.

Throughout this paper, we will let $X \to \proj^1_K$ be a
$\ints/n$-cover of $\proj^1$ over $K$ of genus $g \geq 1$.  Write $\P^1_{K} =
\Proj K[x_0,x_1]$ and let $x \colonequals x_1/x_0$. Let $\P^1_{\O_K}
\colonequals \Proj \mc{O}_K[x_0,x_1]$. We assume that $X$ is given by an
(affine) equation $y^n = f$, with $f \in \mc{O}_K[x]$ that is $n$-th power free.  

On a regular arithmetic surface, the intersection number of two
vertical divisors $D_1$ and $D_2$ will be written $(D_1, D_2)$.

\section*{Acknowledgments}
In an earlier version of the paper, we had originally thought that resolutions of tame cyclic quotient singularities in the context of Proposition \ref{prop: resolution of tame cyclic quotient singularities} were not necessarily minimal. In fact, they are always minimal, and we thank Kioshi Morosin for help identifying an error in a purportedly non-minimal example. 

\section*{AI Statement}
This work was conducted without the use of AI. In addition, the main results and their proofs were known to us by January 2025 and discussed by the authors in public talks in 2025 and early 2026, prior to the recent surge of AI-assisted proofs.

\section{The geometry of the $n$-adapted discriminant}\label{Sdiscgeometry}

In this section, we prove that the $n$-adapted discriminant $\disc_n(f)$ from Definition~\ref{Dpolydisc} detects collisions between the
components of the branch divisor of the cover $y^n=f(x)$ (\Cref{Lcollisions}), and thus serves as a measure of degeneracy for a $\ints/n$-cover of $\proj^1_K$ (\Cref{Pquotsmoothmodel}). Recall $\proj^1_{\mc{O}_K} \colonequals \Proj \mc{O}_K[x_0, x_1]$ is the standard model $\proj^1_K$, that   
$x \colonequals x_1/x_0$, and that $X$ is the curve with affine equation $y^n=f(x)$. Let $\mc{X}$ be the normalization of  $\proj^1_{\mc{O}_K}$
 in $K(X)$.  Let $B \subseteq \proj^1_{\mc{O}_K}$ be the branch locus of
$\mc{X} \to \proj^1_{\mc{O}_K}$ with its reduced induced subscheme structure. Write $B = H + V$ for the decomposition of $B$ into
its horizontal and vertical parts.  If $\phi$ is the structure
morphism $H \to \mc{O}_K$, let $\disc(H \to \mc{O}_K)$ be the closed
subscheme $\Delta(\phi)$ of $\Spec
\mc{O}_K$ defined in \cite[Definition V-21]{EH}. 

\begin{lemma}\label{Lcollisions}
\hfill
 \begin{enumerate}[\upshape (i)]
 \item $$\nu_K(\disc_n(f)) = \begin{cases} \len (\disc(H \to
 \mc{O}_K)) \textup{ \quad \quad \quad \quad \quad \quad \quad if $V=0$, and,} \\
 \len (\disc(H \to
 \mc{O}_K)) + 2H \cdot V - 2 \textup{ \quad if $V \neq 0$.} \end{cases}$$ 
  In particular $\nu_K(\disc_n(f)) = 0$ if and only if the map $B \rightarrow \mc{O}_K$ is smooth.
 \item\label{Lnormalsmooth}  The model $\mc{X}$ is a
   smooth model for $X$ if and only if $\nu_K({\disc_n(f)}) = 0$. 
\end{enumerate}
\end{lemma}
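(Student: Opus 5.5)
The plan is to identify the branch locus $B$ explicitly from the equation and then compare discriminants; part (ii) is then deduced from part (i).

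\emph{Step 1: describe $B$ concretely.} Write $\widetilde{f}_n = \pi_K^{c}\, u \prod_i g_i^{a_i}$ in $\mc{O}_K[x_0,x_1]$. Here $u$ is a unit, the $g_i$ are distinct irreducible primitive homogeneous polynomials, and $c$ is the valuation of the content. I read ``$n$th power-free'' to mean that $0 \le c < n$ and $n \nmid a_i$.

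Since $p \nmid n$, the cover is tame Kummer. Locally it is given by $y^n = (\text{unit})\cdot h^{a}$, so it is branched exactly along the prime divisors where $\widetilde{f}_n$ has valuation not divisible by $n$. This identifies the two parts of $B$:
\begin{itemize}
\item the horizontal part is $H = V(g_0)$, where $g_0 = \prod_i g_i$ is the primitive part of $g = \rad(\widetilde{f}_n)$;
\item the vertical part $V$ is the special fiber exactly when $n \nmid c$, which by the power-free assumption happens exactly when $c \geq 1$, i.e. exactly when $\pi_K \mid g$.
\end{itemize}
So $g = g_0$ when $V = 0$, and $g = \pi_K g_0$ when $V \neq 0$.

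\emph{Step 2: compare discriminants and prove (i).}
\begin{itemize}
\item The homogeneous discriminant of a binary form of degree $m$ is homogeneous of degree $2m-2$ in the coefficients. Hence $\nu_K(\disc(\pi_K g_0)) = 2m - 2 + \nu_K(\disc(g_0))$ with $m = \deg g_0$.
\item We have $H \cdot V = m$, since $g_0$ is primitive of degree $m$.
\item It remains to show $\nu_K(\disc g_0) = \len(\disc(H\to \mc{O}_K))$. Both sides are invariant under $\GL_2(\mc{O}_K)$-changes of coordinates up to units. Both are also invariant under unramified base change (\S\ref{Salgclosedreduction}), so we may assume $k$ is infinite.
\item After moving $\infty$ off the reduction of $H$, the form $g_0$ dehomogenizes to a polynomial with unit leading coefficient. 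Then $H = \Spec \mc{O}_K[x]/(g_0(x))$ is finite flat and monogenic, and the discriminant of its trace form equals the polynomial discriminant.
\item For the ``in particular'': if $V \neq 0$ then $B$ contains the special fiber, so $B \to \mc{O}_K$ is not smooth. Correspondingly $2m-2 > 0$, because a cover of genus $\geq 1$ has at least $2$ geometric branch points. If $V = 0$, then $H$ is finite flat, and it is étale, i.e. smooth, iff its discriminant is trivial.
\end{itemize}

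\emph{Step 3: prove (ii), the implication $\Leftarrow$.} Suppose $\nu_K(\disc_n(f)) = 0$. By (i), $V = 0$ and the components of $H$ are disjoint and étale over $\mc{O}_K$. Near a point of $H$ the cover is $y^n = w\, h^{a}$, where $h$ is part of a regular system of parameters and $(h, \pi_K)$ is a regular system. Set $d = \gcd(a,n)$. By Abhyankar's lemma, étale locally the normalization is a disjoint union of copies of $z^{n/d} = w' h$, which is smooth over $\mc{O}_K$. Away from $H$ the cover is étale. Hence $\mc{X}$ is smooth.

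\emph{Step 4: prove (ii), the implication $\Rightarrow$; this is the main obstacle.} Suppose $\mc{X}$ is smooth.
\begin{itemize}
\item First, $V = 0$. Otherwise the tame ramification index $e > 1$ along the special fiber would make $\pi_K$ equal to a unit times $t^{e}$, giving a nonreduced special fiber.
\item Next, $H$ is étale over $\mc{O}_K$, and this is the delicate part. I would use Riemann--Hurwitz on both fibers.
\begin{itemize}
\item Smoothness gives $g(\mc{X}_{\ol{k}}) = g(X)$.
\item The special fiber is a tame $\ints/n$-cover $\mc{X}_{\ol{k}} \to \proj^1_{\ol{k}}$. It is connected, since it is the special fiber of a proper smooth model of a geometrically connected curve. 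It is locally given by the reduction of the equation.
\item The ramification term at a geometric branch point is $n - \#(\text{fiber})$. When several geometric points of $H_{\ol{K}}$ specialize to one point, this term is strictly smaller than the sum of the generic terms. This is clearest after computing, via the Kummer description, the inertia of the collided point from the total exponent.
\item If $H$ were not étale, a collision would occur, so $g(\mc{X}_{\ol{k}}) < g(X)$, a contradiction.
\end{itemize}
\end{itemize}
The subtle point is verifying this strict inequality in every case, including when the exponents add up to a multiple of $n$ so that the collided point becomes unbranched. I expect to handle it by a direct count of the stabilizer orders in terms of $\gcd(\sum a_i, n)$ versus $\gcd(a_i, n)$.
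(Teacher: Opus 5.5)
Your proposal is correct. Part (i) and the ``if'' direction of (ii) follow the paper. Your homogeneity argument is the paper's use of \cite[Ch.~12, (1.23)]{GKZ}, and your Abhyankar step is its use of \cite[Expos\'e~XIII, Proposition~5.4]{SGA1}. The one small difference in (i) is that you re-derive $\nu_K(\disc g_0)=\len(\disc(H\to\mc{O}_K))$ by moving $\infty$ off the reduction of $H$ and reducing to a monogenic algebra, where the paper simply cites \cite[Corollary~V-23]{EH}.

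The genuine difference is in the ``only if'' direction of (ii). The paper argues locally. At a point $z$ over $t\in H$, both completed local rings are power series rings over $\mc{O}_K$, and conservation of the different \cite[I, Claim~3.2]{GM} gives $|I_z|-1=d_k=d_K\geq(\#\text{branch points specializing to }t)\cdot|I_z|/2$.

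You argue globally instead: smoothness forces $g(\mc{X}_{\ol{k}})=g(X)$, and you compare Riemann--Hurwitz on the two fibers. For this you need the special fiber's function field to be $k(x)(\ol{y})$ with $\ol{y}^n=\ol{f}$, which holds but deserves a sentence:
\begin{itemize}
\item $V=0$ gives $\ol{f}\neq 0$, so $\ol{y}\neq 0$ in the function field of the integral curve $\mc{X}_k$.
\item $\ol{y}$ is a $\sigma$-eigenvector with primitive eigenvalue.
\item $\mc{X}_k/(\ints/n)=\proj^1_k$ by tameness.
\end{itemize}
You should also work with $\widetilde{f}_n$ so that $\infty$ is treated uniformly.

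The count you left open then closes in one line. Each generic term satisfies $n-\gcd(n,a_P)\geq n/2$ because $n\nmid a_P$, so two or more colliding points contribute at least $n$. The special term is $n-\gcd(n,\sum a_P)\leq n-1$, and it is $0$ when $n\mid\sum a_P$. This is exactly the paper's comparison of $|I_z|(1-1/e)\geq|I_z|/2$ with $|I_z|-1$, summed over the whole curve.

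Your route uses only Riemann--Hurwitz for smooth curves and constancy of genus, at the cost of identifying the reduced Kummer equation. The paper's route needs no description of the special fiber, but relies on the cited local conservation-of-the-different result.
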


\begin{proof}
  \hfill
 \begin{enumerate}[\upshape (i)] 
  \item Recall from \Cref{Dpolydisc} that we defined $\widetilde{f}_n(x_0, x_1) \colonequals x_0^df(x_1/x_0)$, where
$d = n\lceil \deg(f)/n \rceil$ and $g = \rad(f_n) \in \mc{O}_K[x_0,x_1]/\mc{O}_K^\times$. By construction $\divi(g)=B$, and further if we write $g = ch$ with $c
\in \mc{O}_K$ and $h \in \mc{O}_K[x_0,x_1]$ such that $\pi_K$ does not divide $h$, then $V(c) =
V$ and $V(h) = H$. By \Cref{Dpolydisc} and \cite[Chapter~12, Equation~(1.23)]{GKZ}, 
\[ \disc_n(f) = \disc(g) = c^{2(\deg(h) - 1)}\disc(h),\]
which implies that
\[ \nu_K(\disc_n(f)) = \nu_K(\disc(h)) + (2\deg(h)-2) \nu_K(c).\]
Since $\nu_K(c) = 1$ if $V \neq 0$ and $\nu_K(c) = 0$ if $V = 0$, and since $\deg(h) = \deg(h \mod \pi_K)$ by our assumption that $\pi_K$ does not divide $h$, and since $\deg(h \mod \pi_K) = H \cdot V$ by definition, it follows that $(2\deg(h)-2) \nu_K(c)$ equals $2H \cdot V-2$ if $V \neq 0$ and is $0$ if $V=0$.
Since $\disc(h)$ cuts out the subscheme $\disc(H \to \mc{O}_K)$ of $\Spec
\mc{O}_K$ by \cite[Corollary V-23]{EH}, it follows that
$\nu_K(\disc(h)) = \text{length}(\disc(H \to \mc{O}_K))$. Combining the last two sentences we get the desired equality.

Since $X$ has genus $\geq 1$, the cover $X \to \proj^1_K$ is branched at at least three points, so when $V \neq 0$ we have $H \cdot V \geq 3$. Combined with the previous paragraph, it follows that $\nu_K({\disc_n(f)}) = 0$
precisely when the components of the branch divisor $B$ are pairwise
disjoint and $H \rightarrow \mc{O}_K$ is \'{e}tale. These
conditions are together equivalent to $B \rightarrow \mc{O}_K$ being
\'{e}tale.

\item In light of part (i), we prove that $\mc{X} \to \mc{O}_K$ is
  smooth if and only if $B \to \mc{O}_K$ is smooth.  If $V \neq 0$, then the special
  fiber of $\mc{X}$ is not reduced and thus not smooth.  So we may
  assume $V = 0$ (meaning that $\rad(f)$ is monic), and show
  that the map $\mc{X} \to \mc{O}_K$ is smooth if and only if the map $H \to
  \mc{O}_K$ is smooth.

  For the ``if'' direction, since $k$ is algebraically closed by assumption, it follows that the map $H \to \mc{O}_K$ is
  smooth precisely when $H$ is a disjoint union of
  sections of $\proj^1_{\mc{O}_K} \to \mc{O}_K$, or equivalently that
  $\rad(f)$ splits into linear factors over $K$ and remains separable when
  reduced to $k$. In this case $\mc{X} \to \mc{O}_K$
  is smooth by \cite[Expos\'{e}~XIII, Proposition~5.4]{SGA1}
applied to $S = \Spec \mc{O}_K$ and $X$ being the localization of $\proj^1_{\mc{O}_K}$ at each closed
point of $H$.

  For the ``only if'' direction, assume that
  $\mc{X} \to \mc{O}_K$ is smooth, let $t \in
  \proj^1_{\mc{O}_K}$ be a closed point of $H$, and
  let $z \in \mc{X}$ be a point above $t$.  Then $\hat{\mc{O}}_{\proj^1_{\mc{O}_K},
    t} \cong (\hat{\mc{O}}_{\mc{X}, z})^{I_z}$, where $I_z \leq \ints/n$ is the inertia
  group at $z$, and both rings are power series rings in one variable
  over $\mc{O}_K$. Let $d_k$ be the degree of the ramification divisor
  of the special fiber of $\mc{X} \to \proj^1_{\mc{O}_K}$ at $z$, and
  let $d_K$
  be the sum of degrees of the ramification divisor of the generic fiber $X \to \proj^1_{K}$ at all points
  specializing to $z$. Then $d_k = d_K$ by \cite[I, Claim 3.2]{GM}. A
  geometric point of $\proj^1_K$ with ramification index $e$ for the
  cover contributes $|I_z|(1 - 1/e) \geq |I_z|/2$ to the sum $d_K$.
  We also have that $d_k = |I_z| - 1$. Combining the last three sentences, we conclude that there
  is only one such geometric point, which means that $H \to \mc{O}_K$
  is locally of degree 1 at $t$, or equivalently a local isomorphism, and thus $H \to \mc{O}_K$ is smooth at $t$. \qedhere   
 \end{enumerate}
  \end{proof}

\begin{remark}\label{R:geodisc} Since $V = \mathbb{P}^1_k$ when $V \neq 0$, using the convention that $p_a(\emptyset) = 1$\footnote{This convention is reasonable if we think of $\dim(\emptyset) = -1$ and
   $\chi(\emptyset, \mc{O}_{\emptyset}) = 0$.}, \Cref{Lcollisions} above can be restated more uniformly as
   \begin{equation}\label{Ediscgen} \nu_K(\disc_n(f)) = \len (\disc(H \to
 \mc{O}_K)) + 2H \cdot V + 2p_a(V) - 2. \end{equation}

Thus, we can break $\nu_K(\disc_n(f))$ into three parts: The first part $\disc(H
\to \mc{O}_K)$, accounts for collisions between the
 horizontal components of $B$, the second part
 $2H \cdot V$, accounts for collisions (if any) between
 the unique vertical component of $B$ with the
 horizontal components of $B$, and the third part accounts for collisions between the vertical components of $B$.
In forthcoming work \cite{OSRelative}, the first two authors use \Cref{Ediscgen} to define the discriminant of the branch divisor on an arbitrary regular arithmetic surface $\mc{Y}$ in place of $\proj^1_{\mc{O}_K}$ to study conductor--discriminant inequalities in a more general setting. In the case of general $\mc{Y}$, the vertical branch divisor $V$ may have more than one irreducible component  and the appropriate generalization of $\nu_K(\disc_n(f))$ would need to account for collisions between different vertical branch components.
\end{remark}

\begin{prop}\label{Pquotsmoothmodel}
Let $X$ be a nice curve with a smooth proper $\mc{O}_K$-model $\mc{X}$. Assume that $X$ admits a $\ints/n$-action such that the quotient is isomorphic to $\proj^1_K$. Assume that the genus $g$ of $X$ is at least $1$. Then the $\ints/n$-action extends to $\mc{X}$, and there is an isomorphism $\mc{X}/(\ints/n) \cong \proj^1_{\mc{O}_K}$. Moreover, there is an affine equation for $X$ of the form $y^n=f(x)$ with $f(x) \in \mc{O}_K[x]$ that is $n$th power free and satisfies $\nu_K(\disc_n(f)) = 0$. 
\end{prop}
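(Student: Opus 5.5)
Since $g \geq 1$, the action of $G = \ints/n$ on $X$ extends to $\mc{X}$. A smooth proper model of a curve of positive genus is the minimal regular model. Automorphisms of $X$ extend to the minimal regular model by uniqueness of minimal regular models; for $g=1$ with a smooth model one uses that this model is still the minimal regular model, so the same holds. Then form the quotient $\mc{Y} \colonequals \mc{X}/G$, which exists since $\mc{X}$ is projective. It is normal, proper and flat over $\mc{O}_K$, with generic fiber $X/G \cong \proj^1_K$.

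Next I show $\mc{Y}$ is smooth over $\mc{O}_K$. Since $p \nmid n$, taking $G$-invariants commutes with base change to $k$, so $\mc{Y}_k = \mc{X}_k/G$. Here $\mc{X}_k$ is a smooth connected curve and $G$ acts on it tamely, so the quotient is a smooth projective curve. Moreover $\mc{Y}_k$ is reduced, and hence $\mc{Y}$ is smooth over $\mc{O}_K$. Its fibers have genus $0$, since arithmetic genus is constant in flat families and the generic fiber is $\proj^1_K$. Then $\mc{Y}$ is a smooth $\proj^1$-bundle over $\mc{O}_K$. Since $\mc{O}_K$ is Henselian and $k$ is perfect, $\mc{Y}_k \cong \proj^1_k$ has a rational point, which lifts to a section by smoothness. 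Hence $\mc{Y} \cong \proj^1_{\mc{O}_K}$. (Alternatively, a conic over the local ring with good reduction is split.)

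Finally, choose coordinates with $\mc{Y} = \proj^1_{\mc{O}_K}$ and $x = x_1/x_0$. Since $\mc{X}$ is normal and $\mc{X} \to \mc{Y}$ is finite, $\mc{X}$ is the normalization of $\proj^1_{\mc{O}_K}$ in $K(X)$. By Kummer theory, $K(X) = K(x)(y)$ with $y^n = f(x)$. Scaling $y$ by powers of $\pi_K$ and by polynomials, I may take $f \in \mc{O}_K[x]$ integral and $n$th power free; this uses that $K$ contains $\mu_n$, after the reduction to algebraically closed residue field of \S\ref{Salgclosedreduction}. The normalization of $\proj^1_{\mc{O}_K}$ in $K(X)$ is then the model considered in \Cref{Lcollisions}. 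Since it equals the smooth $\mc{X}$, \Cref{Lcollisions}~\ref{Lnormalsmooth} gives $\nu_K(\disc_n(f)) = 0$.

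The main obstacle is the smoothness of the quotient $\mc{Y}$, specifically that the special fiber of the quotient equals the quotient of the special fiber. This is where tameness ($p \nmid n$) is used, via exactness of invariants for a group of order invertible on the base. A secondary point is ensuring that $f$ can be chosen so that no vertical branch component appears. This is automatic, because \Cref{Lcollisions} already shows that a vertical branch component would make the special fiber nonreduced, contradicting the smoothness of $\mc{X}$.
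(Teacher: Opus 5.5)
Your proof is correct and follows the paper's outline: extend the action using uniqueness of the minimal regular model, show $\mc{X}/(\ints/n)$ is smooth and hence $\cong \proj^1_{\mc{O}_K}$, then apply Kummer theory and \Cref{Lcollisions}~\ref{Lnormalsmooth}. The one difference is that you prove smoothness of the quotient yourself (invariants commute with base change because $n \in \mc{O}_K^\times$, so $\mc{Y}_k = \mc{X}_k/G$ is a normal, hence smooth, curve), whereas the paper cites \cite[Proposition~10.3.48(a)]{Liubook}; there is also a small slip, since perfectness of $k$ does not by itself give $\mc{Y}_k$ a rational point, but a $K$-point of $\mc{Y}_K \cong \proj^1_K$ extends to a section of $\mc{Y}$ by properness, which is all you need (and $k$ is algebraically closed after \S\ref{Salgclosedreduction} anyway).
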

\begin{proof}
Since $g \geq 1$ by assumption, the minimal proper regular model of
$X$ is unique up to unique isomorphism (cf. \cite[Definition~3.31,
Proposition~3.36~(2)]{Liubook}) and since the smooth model $\mc{X}$ is
the minimal model, it follows that the $\ints/n$-action extends to
$\mc{X}$.  If $\mc{Y} \colonequals \mc{X}/(\ints/n)$, then \cite[Proposition~10.3.48(a)]{Liubook} implies that
$\mc{Y}$ is
smooth, and thus isomorphic to $\proj^1_{\mc{O}_K}$. 

Since $n \nmid \cha(k)$ and since $K$ is Henselian, it follows that $K$ has all $n$th roots of unity. The final claim follows from \Cref{Lcollisions}~\ref{Lnormalsmooth} by choosing explicit coordinates on $\proj^1_{\mc{O}_K}$ and applying Kummer theory. 
\end{proof}

\section{Reduction to the case of algebraically closed residue field}\label{Salgclosedreduction}
In this section, we explain why for Theorem~\ref{Tcdi} and Corollary~\ref{Ccedi}, we may assume that the residue field $k$ is \emph{algebraically
    closed} rather than just perfect for the rest of the paper without any loss of generality. Firstly,
 replacing $K$ with an unramified extension does not affect $\nu_K(\disc_n(f))$.  Secondly, base changes to unramified extensions of $K$ do not affect the conductor exponent $\phi(X/K)$.  Thirdly, (normalized) base changes of any regular model $\mc{X}/\mc{O}_K$ to 
  unramified extensions of $\mc{O}_K$ preserve regularity and
  commute with reduction to the special fiber.  In particular, to
  construct a regular model $\mc{X}_f$ as in Theorem~\ref{Tcdi}, it
  suffices to construct such a model $\mc{X}_f^{ur}$ over the ring of
  integers $\mc{O}_K^{ur}$ of the
  maximal unramified extension $K^{ur}$ of $K$, then let $\mc{X}_f \colonequals \mc{X}_f^{ur}/\Gal(K^{ur}/K)$; observe that the quantities on
  the right hand side of the equation in
  Proposition~\ref{Partinconductor} are the same before and after
  taking the quotient.   Thus $\Art(\mc{X}_f/\mc{O}_K) = \Art(\mc{X}_f^{ur}/\mc{O}_K^{ur})$.  So
  the inequality in Theorem~\ref{Tcdi} holds for $\mc{X}_f$ so long as
  it holds for $\mc{X}_f^{ur}$.  So from now on we assume that $k$ is algebraically closed.

Since $K$ is Henselian and $k$ is algebraically closed with $\cha k \nmid n$, all units in $\mc{O}_K$ are $n$th powers, and the group $K^{\times}/(K^{\times})^n$ has
order $n$ and is
generated by the image of $\pi_K$.  Since multiplying $f$ by $n$th powers does not change the isomorphism class of $X$, without loss of generality we may assume that the maximum power of $\pi_K$ dividing $f$ in $\O_K[x]$ is $\pi_K^b$ for some $b
\in \{0, \ldots, n-1\}$.  

\begin{assumption}\label{Affactorization} Assume $\chara k \nmid n$ and $k$ is algebraically closed. We
write the irreducible factorization of $f$ as 
\begin{equation}\label{Effactorization}
  \pi_K^bf_1^{d_1} \ldots f_r^{d_r},
\end{equation}
where $b \in \{0,
\ldots, n-1\}$, the $d_i \in \{1, \ldots, n-1\}$, and the $f_i \in
\mc{O}_K[x]$ are distinct irreducible separable polynomials. For each $1 \leq i \leq
r$, we pick a root $\alpha_i$ of $f_i$ in $\ol{K}$ and set $K_i = K(\alpha_i)$.
\end{assumption}

\section{A computable formula for the Swan conductor}\label{Sswan}



\begin{prop}\label{Pswanconductor}

Let $f,K_i$ be as in \Cref{Affactorization}, and let $X/K$ be the smooth projective curve with affine equation $y^n =
f(x)$. 
The Swan conductor $\delta(X/K)$ for the curve $X$ equals
\[ \sum_{i=1}^r (n - \gcd(n, d_i)) (\Delta_{K_i/K} - \deg(f_i) + 1) .\]
\end{prop}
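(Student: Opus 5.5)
Our plan is to decompose $H^1_{\mathrm{et}}(X_{\ol{K}}, \Q_\ell)$ under the $\ints/n$-action into isotypic pieces indexed by characters $\chi$ of $\ints/n$, and then compute the Swan conductor of each piece via a Grothendieck--Ogg--Shafarevich type formula on the base $\proj^1_K$. Since $\cha k \nmid n$ and $K$ contains $\mu_n$ (\S\ref{Salgclosedreduction}), the automorphism group $\ints/n$ acts $K$-rationally on $X$. After extending scalars to $\ol{\Q}_\ell$ we get
\[ H^1(X_{\ol K},\ol{\Q}_\ell)=\bigoplus_{\chi\neq 1} H^1(\proj^1_{\ol K}, j_!\mc{L}_\chi), \]
where $\mc{L}_\chi$ is the rank one Kummer sheaf on $U=\proj^1\setminus\{\text{branch points}\}$ attached to $f$ and $\chi$. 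At a root $\alpha$ of $f_i$, the local monodromy of $\mc{L}_\chi$ is $\chi^{d_i}$. It is nontrivial exactly when $\chi^{d_i}\neq 1$; for $\chi$ of order $m\mid n$, $m\neq 1$, this means $m\nmid d_i$. The invariant term $H^0$ vanishes and so does $H^2$, so each summand is concentrated in degree $1$. Counting over all characters, the number of nontrivial $\chi$ with $\chi^{d_i}\neq 1$ equals $n-\gcd(n,d_i)$.

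Next we compute $\delta$ for each $\chi$-part. Since the Swan conductor is additive, we may take the Euler characteristic $\chi(\proj^1_{\ol K}, j_!\mc{L}_\chi)$ with its Galois action. There are two routes. The first is to apply a formula for the Swan conductor of $R\Gamma$ of a tamely ramified (on the geometric fiber) sheaf on $\proj^1_K$ with branch divisor étale over $K$. Equivalently, the second is to express $R\Gamma_c(\proj^1_{\ol K}, j_!\mc{L}_\chi)$ as $R\Gamma(\proj^1_{\ol K},\ol{\Q}_\ell)$ minus the contribution of the stalks $\mc{L}_\chi$ over the ramified points. Let $Z_i=V(f_i)$ be the set of roots of $f_i$, together with $\infty$ if $n\nmid\deg f$. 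We have
\[ R\Gamma_c(U_{\ol K},\mc{L}_\chi)=R\Gamma(\proj^1_{\ol K},j_*\mc{L}_\chi)-\bigoplus_i(\text{stalk terms}). \]
Rather than tracking the twist by $\mc{L}_\chi$ directly, we compute in the Grothendieck group of Galois representations, where the $\chi$-part differs from the trivial part only by a tame character. Tame twisting does not change Swan conductors of permutation-type modules built from the roots. The key identity we use is
\[ \chi_c(U_{\ol K}) = \chi(\proj^1)-\sum_i \mathrm{Ind}_{K_i}^K\mathbf 1. \]
The Swan conductor of $\mathrm{Ind}_{K_i}^K\mathbf 1=\Q_\ell[\mathrm{Hom}(K_i,\ol K)]$ is the wild part of the discriminant, namely $\Delta_{K_i/K}-(\deg f_i - f_{K_i/K})$, with $f_{K_i/K}=1$ since $k$ is algebraically closed. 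This gives $\Delta_{K_i/K}-\deg f_i+1$. Summing over $\chi$ with $\chi^{d_i}\neq1$ produces the factor $n-\gcd(n,d_i)$. The point $\infty$, if branched, is $K$-rational and contributes $0$.

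The main obstacle is justifying that the Swan conductor of the $\chi$-isotypic piece of the stalk term at $Z_i$ equals that of $\mathrm{Ind}_{K_i}^K\mathbf 1$, twisted by a character. Concretely, the Galois module is $\mathrm{Ind}_{K_i}^K(\psi)$, where $\psi$ is the character of $G_{K_i}$ describing the Galois action on the fiber of $\mc{L}_\chi$ at $\alpha_i$, i.e.\ on the $\chi$-eigenline of the punctured disc at $\alpha_i$. Here $\psi$ is a Kummer character of $K_i$ of order dividing $n$, hence tame. We must show $\mathrm{sw}(\mathrm{Ind}_{K_i}^K\psi)=\mathrm{sw}(\mathrm{Ind}_{K_i}^K\mathbf 1)$. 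The first step is the induction formula $\mathrm{sw}(\mathrm{Ind}\,\psi)=\mathrm{sw}(\psi)+\text{(wild discriminant term)}\cdot\dim\psi$, applied with $\mathrm{sw}(\psi)=0$. The same bookkeeping applies to the local terms. We also need that $j_*\mc{L}_\chi=j_!\mc{L}_\chi$ at the points where $\chi^{d_i}\neq1$ and that nothing else contributes; the latter uses that $\proj^1$ has good reduction, so the global term $R\Gamma(\proj^1)$ is unramified.
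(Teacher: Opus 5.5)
You set up several pieces correctly: the $\chi$-isotypic decomposition, the count $n-\gcd(n,d_i)$ of characters with $\chi^{d_i}\neq 1$, and the local identity $\mathrm{sw}(\mathrm{Ind}_{K_i}^K\psi)=\mathrm{sw}(\mathrm{Ind}_{K_i}^K\mathbf 1)=\Delta_{K_i/K}-\deg(f_i)+1$ for tame $\psi$. There is one slip. If $\chi^{d_i}=1$, the roots of $f_i$ are not ramification points of $\mc L_\chi$, so you must use $j_*\mc L_\chi$, or $j_!$ from an open set $U_\chi$ that depends on $\chi$. With $U$ the full complement of the branch locus, $H^1(\proj^1,j_!\mc L_\chi)$ is too big.

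The genuine gap is the global step. What you need is
\[
\mathrm{sw}\,R\Gamma_c(U_{\chi,\ol K},\mc L_\chi)=\mathrm{sw}\,R\Gamma_c(U_{\chi,\ol K},\Q_\ell),
\]
and nothing you wrote proves it. The $\chi$-part is not a twist of the trivial part by a character. With $s$ punctures, $H^1_c(U_{\ol K},\mc L_\chi)$ has dimension $s-2$, while $H^1_c(U_{\ol K},\Q_\ell)\cong\Q_\ell[Z]/\Q_\ell$ has dimension $s-1$. Nor is there an exact triangle writing $R\Gamma_c(U,\mc L_\chi)$ as $R\Gamma(\proj^1,\Q_\ell)$ minus local terms $\mathrm{Ind}_{K_i}^K\psi_i$. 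The only triangle available compares $j_!\mc L_\chi$ with $j_*\mc L_\chi$, and $R\Gamma(\proj^1_{\ol K},j_*\mc L_\chi)$ is exactly the unknown. Good reduction of $\proj^1_K$ controls $R\Gamma(\proj^1_{\ol K},\Q_\ell)$. It says nothing about cohomology with coefficients in $\mc L_\chi$, whose ramification depends on the global configuration of the branch points modulo $\pi_K$. So your ``main obstacle'' paragraph settles a local question, but the reduction of the global Swan conductor to those local terms is asserted, not proved. Your ``first route'' is precisely this missing theorem, left unnamed.

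To close the gap you need real global input. One option is a theorem saying that, for a lisse sheaf whose monodromy has order prime to $p$, the Swan conductor of compactly supported cohomology is the rank times that of the constant sheaf. I believe a result of this kind is due to Vidal, as an arithmetic analogue of Deligne's theorem that Euler characteristics of tamely ramified sheaves depend only on rank; check the exact hypotheses before relying on it. With such a theorem your bookkeeping goes through, and the $\mathrm{Ind}\,\psi$ computation is not even needed.

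The paper instead makes wild inertia act geometrically. Over a Galois extension $L/K$ splitting $f$, the groups $G_j$ ($j\geq1$) act by $k$-automorphisms, commuting with $\ints/n$, on the special fiber $\ol X^{qs}$ of the Bouw--Wewers quasi-stable model. The paper then uses the formula $\delta(X/K)=\sum_{j\ge1}\frac{|G_j|}{|G_0|}\bigl(2p_a(\ol X^{qs})-2p_a(\ol X^{qs}/G_j)\bigr)$. Riemann--Hurwitz for the degree-$n$ maps $\ol X^{qs}\to\ol Y^{qs}$ and $\ol X^{qs}/G_j\to\ol Y^{qs}/G_j$ gives the genus drop $\sum_i(n-\gcd(n,d_i))(|B_i|-|B_i/G_j|)$. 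The weighted sum over $j$ of these terms is exactly $\mathrm{sw}(\mathrm{Ind}_{K_i}^K\mathbf 1)$ for each $i$.
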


\begin{proof}
Let $L/K$ be a finite Galois extension over which $f$ splits, and let
$G_j$ be the $j$th higher ramification group of $G \colonequals \Gal(L/K)$ for the lower
numbering.  Let $B_i$ be the set of roots of $f_i$, and let $B = \bigsqcup_i B_i$.  Since the $f_i$ are irreducible, $G$ acts
on each $B_i$.  Denote the special fiber of
the quasi-stable model $\mathcal{X}^{qs}$ of $X$ constructed in
\cite[\S4]{BW_Glasgow} by $\overline{X}^{qs}$.  The model
$\mathcal{X}^{qs}$ is a semi-stable
model for which all ramification points of $X \to \proj^1$ specialize
to distinct smooth points. 
  By construction of $\mathcal{X}^{qs}$, it follows that $\Gamma \colonequals \Gal(X/\proj^1)$ acts on $\ol{X}^{qs}$ and we write
$\ol{Y}^{qs} \colonequals \ol{X}^{qs}/\Gamma$.  The group $G$ acts on
$\overline{X}^{qs}$ by $k$-automorphisms and commutes with the action
of $\Gamma$, so its action descends to $\ol{Y}^{qs}$.
Since arithmetic genus
is constant in flat families, we have $p_a(\ol{X}^{qs}) = g(X)$ and
$p_a(\ol{Y}^{qs}) = 0$.

By \cite[Theorem 2.13]{BW_Glasgow}, we have the formula
\begin{equation}\label{Ebasicswan}
\delta(X/K) = \sum_{j=1}^{\infty} \frac{|G_j|}{|G_0|} \cdot
(2p_a(\overline{X}^{qs})-2p_a(\overline{X}^{qs}/G_j)). 
\end{equation}

For $j \geq 1$, we claim that
\begin{equation}\label{EGjpart}
2p_a(\overline{X}^{qs}) -
2p_a(\overline{X}^{qs}/G_j) = \sum_{i =1}^r (n- \gcd(n, d_i))(|B_i| -
|B_i/G_j|).
\end{equation}

To prove (\ref{EGjpart}), note that since the actions of $\Gamma$ and $G$
commute, the action of $\Gamma$ descends to $\ol{Y}/G_j$
for any $j$.  Since the group $|G_j|$ is a $p$-group for all $j \geq 1$,
it cannot permute a $\Gamma$-orbit of
$\ol{X}^{qs}$ non-trivially.  Consequently, $\ol{X}^{qs}/G_j \to
\ol{Y}^{qs}/G_j$ is a map of semistable curves
that is generically separable of degree $n$ above each irreducible component of the target. 
By construction, the branch points of $X \to \proj^1$ specialize to
distinct smooth points on $\ol{Y}^{qs}$.  
Identifying $B$ with the branch locus of $X \to \proj^1$ away from $\infty$, we can also
identify the smooth
points of $\ol{Y}^{qs}$ in the branch locus of $\ol{X}^{qs} \to
\ol{Y}^{qs}$ away from the specialization of $\infty$ with $B$ via specialization.  Carrying on, the smooth points of $\ol{Y}^{qs}/G_j$
in the branch locus of $\ol{X}^{qs}/G_j \to \ol{Y}^{qs}/G_j$ away from
the specialization of $\infty$
correspond to the elements of 
$\bigsqcup_i B_i/G_j$.  Furthermore, the ramification index above each
point of $B_i/G_j$ is $n/\gcd(n, d_i)$.  Since $p_a(\ol{Y}^{qs}/G_j) = 0$, the
Riemann-Hurwitz formula for semistable curves (see, e.g.,
\cite[Proposition~2.15]{ObusThesis}) shows
that
\begin{equation}\label{EfullX}
2p_a(\ol{X}^{qs}) - 2 = -2n + 
\sum_{i=1}^r (n - \gcd(n, d_i)) |B_i| + |R_{\infty}|
\end{equation}
and
\begin{equation}\label{EXmodGj}
2p_a(\ol{X}^{qs}/G_j) - 2 = -2n + 
\sum_{i=1}^r (n - \gcd(n, d_i)) |B_i/G_j| + |R_{\infty}|, 
\end{equation}
where $R_{\infty}$ is the ramification divisor above $\infty$.
Combining (\ref{EfullX}) and (\ref{EXmodGj}) proves the claim.

From (\ref{Ebasicswan}) and (\ref{EGjpart}), we obtain
\begin{equation}\label{Efullswan}
\begin{aligned}
    \delta(X/K) &= \sum_{j=1}^{\infty} \frac{|G_j|}{|G_0|} \sum_{i =1}^r (n- \gcd(n, d_i))(|B_i| -
    |B_i/G_j|) \\
    &= \sum_{i=1}^r (n - \gcd(n, d_i)) \sum_{j=1}^{\infty}\frac{|G_j|}{|G_0|}  (|B_i| - |B_i/G_j|).
\end{aligned}
\end{equation}
If $V_i$ is a vector space with $G_j$-stable basis $B_i$, then $\dim(V_i^{G_j}) =
|B_i/G_j|$.  So for each $i$, the sum over $j$ on the right-hand side of
(\ref{Efullswan}) is, by definition, the Swan conductor of the permutation
representation $\rho$ of $G$ on the roots of $f_i$.  Now, $\rho$ is induced from the
trivial representation of $H_i \colonequals \Gal(L/K_i)$.  By a variant of the
conductor-discriminant formula (see \cite[p.\ 101, Corollary to
Proposition 4]{SerreLF}), the Artin conductor of $\rho$
is $\Delta_{K_i/K}$. Since $K_i/K$ is totally ramified, we have $G_0 =
G$ and $V_i^{G_0}$ is generated by $(1, 1, \ldots, 1)$.
$G_0$ acts on $B_i$ with one fixed point.  Thus $\dim(V_i) - \dim(V_i^{G_0}) =
\deg(f_i) - 1$, so the Swan
conductor of $\rho$ is $\Delta_{K_i/K} - \deg(f_i) + 1$.  Plugging
this into (\ref{Efullswan}) proves the proposition.
\end{proof}

\section{The discriminant  bonus}\label{Sdb}

Let $\disc_n(f)$ be the $n$-adapted discriminant of $f$ as in \Cref{Dpolydisc}, and let $K_i$ be as in \Cref{Affactorization}.
\begin{defn}\label{Ddiscbonus}
The \emph{discriminant bonus of $f$ over $K$}, written $\db_K(f)$, is the
quantity $\nu_K(\disc_n(f)) - \sum_{i=1}^r \Delta_{K_i/K}$.
\end{defn}

\begin{remark}
 This generalizes the definition of the discriminant bonus given in \cite[Definition~2.4]{OShyper_published} from $n=2$ to arbitrary $n$.
\end{remark}

\begin{remark}\label{Rcolength}
If $f \in \mc{O}_K[x]$ is irreducible and monic, $\alpha$ is a
root of $f$, and $L = K(\alpha)$, then $\db_K(f)$ is
twice the length of $\mc{O}_L / \mc{O}_K[\alpha]$ as an
$\mc{O}_K$-module.  We do not use this, so we omit the proof.
\end{remark}

\begin{remark}\label{Rdiscprop}
 When $n \mid \deg(f)$, the product formula for the discriminant (cf. \cite[Chapter~12, Equation~(1.23)]{GKZ}) implies that
 \[\nu_K( \disc_n(f)) = \nu_K(\disc(\rad(f)) = \nu_K(\disc(\rad(f/\pi_K^b)))+\begin{cases}0&b=0\\2\deg(\rad(f))-2&b\geq1 \end{cases},\]
 and hence by Definition~\ref{Dpolydisc}, it follows that
 \[ \db_K(f) = \db_K(f/\pi_K^b)+\begin{cases}0&b=0\\2\deg(\rad(f))-2&b\geq1 \end{cases}.\] 
\end{remark}

\section{Reduction to the case where $\deg f$ is divisible by $n$}\label{Sfurtherredn}
Changing coordinates on
$\P^1_K$ using an element of $\GL_2(\mc{O}_K)$ does not change
$\nu_K(\disc_n(f))$ (see, e.g., \cite[Ch.\ 12, (1.26)]{GKZ}).
Since $k$ is
algebraically closed, we may assume by
such a change of coordinates that no component of the branch divisor of $X \rightarrow \proj^1_K$ specializes to $\infty$ (see, e.g., \cite[Section~1.3]{PadmaRational}). In other words, we may assume that all roots of $f$ are integral over $\mc{O}_K$,
and that $n \mid \deg(f)$.   
Thus, for the remainder of the paper, we make the following assumption:

\begin{assumption}\label{Afform} Assume $f,f_i,n$ are as in \Cref{Affactorization}. Additionally assume that the $f_i$ are monic, and that $n \mid \deg(f)$.
\end{assumption}

The argument above proves the following proposition.
\begin{prop}\label{Preduce2}
If the conductor-discriminant inequality holds for all $f$ satisfying Assumption~\ref{Afform}, then it holds for all $f \in \mc{O}_K[x]$.
\end{prop}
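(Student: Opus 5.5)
The plan is to start from an arbitrary $n$th power-free $f \in \mc{O}_K[x]$ and replace it by an equation satisfying Assumption~\ref{Afform} that defines the same curve $X$ and has the same $n$-adapted discriminant. Both sides of (\ref{Econddiscineq}) depend only on $X$, on the model $\mc{X}_f$, and on $\nu_K(\disc_n(f))$. So it suffices to produce such an $f'$ with $\nu_K(\disc_n(f')) = \nu_K(\disc_n(f))$ and $X_{f'} \cong X_f$. Then the regular model $\mc{X}_{f'}$, which exists by hypothesis, can serve as $\mc{X}_f$. By \S\ref{Salgclosedreduction} we may already assume $k$ is algebraically closed, $K$ is Henselian, and $f$ has the shape (\ref{Effactorization}).

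First, apply a coordinate change $\gamma \in \GL_2(\mc{O}_K)$ on $\proj^1_{\mc{O}_K}$. The homogeneous form $\widetilde{f}_n(x_0,x_1)$ transforms to $\widetilde{f}_n\circ\gamma$. The radical commutes with this transformation, and by \cite[Ch.\ 12, (1.26)]{GKZ} the homogeneous discriminant changes only by a unit power of $\det\gamma$. Hence $\nu_K(\disc_n(f))$ is unchanged. The branch divisor $B$ on $\proj^1_{\mc{O}_K}$ has only finitely many points in its specialization, so its reduction meets only finitely many points of $\proj^1_k$. Since $k$ is infinite, we can choose a $k$-rational point not in this set and lift it by Henselianity to an $\mc{O}_K$-point. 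We then choose $\gamma$ moving this point to $\infty$. After the change, no horizontal branch component meets $\infty$ in either fiber.

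Next, dehomogenize: set $f'(x) = \widetilde{f}_n\circ\gamma\,(1,x)$. Because $\infty$ is not a branch point, the $x_0$-adic valuation of the transformed form is divisible by $n$, so $n \mid \deg f'$. Because no root specializes to $\infty$, every irreducible factor has a unit leading coefficient, and we can rescale it to be monic. Rescaling by units and by $n$th powers does not change $X$ or the radical up to units. The function field is unchanged, since $y$ is only rescaled by a power of the linear form $x_0$.

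The only delicate step is checking that $\disc_n$, which is defined with the padding degree $d=n\lceil\deg f/n\rceil$, agrees before and after. This holds because $\widetilde{f}_n$ itself, rather than the naive homogenization, is the object being transformed. Once that is checked, the result follows.
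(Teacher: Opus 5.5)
Your proposal is correct and follows the paper's own argument. You apply a $\GL_2(\mc{O}_K)$ change of coordinates, which preserves $\nu_K(\disc_n(f))$, sending an $\mc{O}_K$-point whose reduction avoids the specialization of the horizontal branch locus to $\infty$, so that all roots become integral and $n \mid \deg f$.

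One small point settles the step you flag as delicate. The $x_0$-adic valuation of $\widetilde{f}_n\circ\gamma$ is in fact $0$, not merely divisible by $n$: all exponents in $\widetilde{f}_n$ are $<n$, so any factor vanishing at $\infty$ would make $\infty$ a branch point. Hence $\deg f' = d$ and $\widetilde{f'}_n = \widetilde{f}_n\circ\gamma$ exactly.
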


\section{Resolution of tame cyclic quotient singularities}\label{Stame}

In this section, we generalize several results about the resolution of tame cyclic quotient singularities from \cite{HN} and \cite{Steck}, dropping the additional ``stable-setting hypothesis'' (\Cref{dfn: tame cyclic quotient admitting stable setting}) made in \cite{HN} and \cite{Steck}.
\begin{defn}\label{dfn: tame cyclic quotient admitting stable setting}
    Let $\nu,r\in\Z$ such that $1\leq r<\nu$, $\chara(k)\nmid \nu$,
    and $\gcd(\nu,r)=1$. Fix a primitive $\nu$-th root of unity $\zeta\in
    \O_K$ and a generator $\sigma\in\Z/\nu\Z$. Consider the
    $\Z/\nu\Z$-action on $\O_K[[t_1,t_2]]$ which is trivial on $\O_K$
    and satisfies $\sigma(t_1)=\zeta t_1$ and
    $\sigma(t_2)=\zeta^rt_2$. Let $\phi\in \O_K[[t_1,t_2]]$ be a power
    series fixed under this $\Z/\nu\Z$-action, and consider the
    induced $\Z/\nu\Z$-action on
    $A\colonequals\O_K[[t_1,t_2]]/(\pi_K-\phi)$.

    Assume $A$ is flat over $\mc{O}_K$, that is, $\pi_K \nmid \phi$.
    An $\O_K$-scheme $\mc{X}$ isomorphic to $\Spec A/({\Z/\nu})$ is called a \textit{tame cyclic quotient singularity of type $\frac{1}{\nu}(1,r)$}. If moreover we can take $A=\O_K[[t_1,t_2]]/(\pi_K-t_1^{a_1}t_2^{a_2})$ for some nonnegative integers $a_1,a_2$ with $a_1\geq1$, then $\mc{X}$ is said to \textit{admit a stable setting} (see \cite[Definition 4.1.5, Theorem 4.1.6, and Theorem 4.3.4]{Steck}).
\end{defn}

\begin{defn}\label{dfn: Hirzebruch--Jung length of pair}
    Let $\nu,r\in\Z$ be as in Definition \ref{dfn: tame cyclic quotient admitting stable setting}. Let $r_0=\nu,r_1=r,$ and while $r_i\neq 0$, let $r_{i+1}$ be the least residue of $-r_{i-1}$ mod $r_i$. The largest index $i$ for which $r_i$ is defined and nonzero is called the \textit{Hirzebruch--Jung length of the pair $(\nu,r)$} and is denoted $L(\nu,r)$.
\end{defn}

\begin{remark}\label{rmk: bound on HJ length of pair} Observe that $L(\nu,r)\leq \nu-1$ in Definition~\ref{dfn: Hirzebruch--Jung length of pair} since the $r_i$ form a decreasing sequence with $r_1=r<\nu$.
\end{remark}

Setting $b_i=\lceil\frac{r_{i-1}}{r_i}\rceil$, we recover the Hirzebruch--Jung (negative) continued fraction expansion for the fraction $\nu/r$ as:
    \[b_1-\frac{1}{b_2-\frac{1}{...-\frac{1}{b_{L(\nu,r)}}}}.\]  
We write $\nu/r = [b_1, b_2, \ldots, b_L]_{HJ}$ with $L = L(\nu, r)$, see
\cite[Definition~4.4.2.4]{HN}.

\begin{prop}\label{prop: resolution of tame cyclic quotient singularities}
    Let $\mc{X}$ be a tame cyclic quotient singularity of type
    $\frac{1}{\nu}(1,r)$ with closed point $x$, let $L=L(\nu,r)$, and
    suppose $\nu/r = [b_1, \ldots, b_L]_{HJ}$.

    \begin{enumerate}[\upshape (i)]
      \item
   The reduced exceptional divisor of the minimal resolution
$\pi:\mc{X}'\to\mc{X}$ is a chain $E_1-E_2-...-E_L$ of $L$ components
isomorphic to $\P^1_k$, i.e., for each $2 \leq i\leq L-1$, the
component $E_i$ intersects $E_{i-1}$ and $E_{i+1}$ each transversely
in a unique point, and
\item For $1 \leq i \leq L$, the self-intersection number of $E_i$ is $-b_i \leq -2$.
  \end{enumerate}
   
\end{prop}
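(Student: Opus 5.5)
The plan is to reduce to the stable-setting case treated in \cite{HN} and \cite{Steck}, where the Hirzebruch--Jung resolution is classical. The key observation is that the exceptional locus and intersection data of the minimal resolution only depend on the complete local ring of $\mc{X}$ at $x$. Moreover, that ring is the invariant ring $A^{\Z/\nu}$ with $A=\O_K[[t_1,t_2]]/(\pi_K-\phi)$.

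\textbf{Step 1: explicit invariant ring.} First I would compute $\O_K[[t_1,t_2]]^{\Z/\nu}$ explicitly. It is topologically generated over $\O_K$ by the monomials $t_1^a t_2^c$ with $a+rc\equiv 0 \pmod \nu$. Since the group order is prime to $p$, taking invariants is exact (via the Reynolds operator). Hence $\mc{O}_{\mc{X},x}^\wedge=\O_K[[t_1,t_2]]^{\Z/\nu}/(\pi_K-\phi)$, with $\phi$ an invariant element. Thus $\mc{X}$ is a Cartier divisor $\{\pi_K=\phi\}$ inside the regular-over-$\O_K$ toric singularity $W=\Spec \O_K[[t_1,t_2]]^{\Z/\nu}$, which is the relative two-dimensional cyclic quotient singularity of type $\frac{1}{\nu}(1,r)$ over $\O_K$.

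\textbf{Step 2: resolve the ambient space torically.} Next I would take the standard toric Hirzebruch--Jung resolution $\widetilde W\to W$ over $\O_K$. Its fibre over the closed point is a chain of $L$ rational curves, given by the subdivision of the cone by the vectors coming from the sequence $r_i$ of Definition~\ref{dfn: Hirzebruch--Jung length of pair}. Locally, $\widetilde W$ is covered by charts $\Spec \O_K[[u_i,v_i]]$ (or $\O_K[u_i,v_i]$ completed along the exceptional locus). In these coordinates $t_1^at_2^c$ becomes a monomial $u_i^{\alpha}v_i^{\beta}$.

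\textbf{Step 3: take the strict transform.} Then I would set $\mc{X}'$ to be the strict transform of $\{\pi_K=\phi\}$ in $\widetilde W$, that is, $\{\pi_K=\widetilde\phi\}$ with $\widetilde\phi$ the pullback of $\phi$. This is the total transform, since $\pi_K$ does not vanish on the exceptional divisor of $W\otimes K$. The scheme $\mc{X}'$ is regular because it is cut out by $\pi_K-\widetilde\phi$ in a regular two-dimensional-over-$\O_K$ scheme: either $\pi_K-\widetilde\phi\notin\mathfrak m^2$ at each point, or one argues directly. Its special fibre over $x$ contains the exceptional curves $E_i\cong\P^1_k$.

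\textbf{Main obstacle: regularity and the intersection numbers.} The hardest point is exactly here. One must check that $\pi_K-\widetilde\phi$ is not in $\mathfrak m^2$ at points of the exceptional chain, which holds because $\pi_K$ is a regular parameter of $\widetilde W$ along the chain, as $W\to\Spec\O_K$ is smooth-toric. With that in hand, the intersection graph of $\mc{X}'$ over $x$ is identified with the chain in the fibre of $\widetilde W$, with the same self-intersections $-b_i$. These are computed from the toric relations $v_{i-1}+v_{i+1}=b_iv_i$, by intersecting $E_i$ with the principal divisor of a suitable monomial on $\mc{X}'$. Transversality of $E_i\cap E_{i+1}$ is read off from the chart $\{u_i=0\}\cap\{v_i=0\}$.

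\textbf{Step 4: minimality.} Finally, each $b_i\ge2$ because $r_{i-1}>r_i$, so no $E_i$ is a $(-1)$-curve. By Castelnuovo's criterion the resolution is therefore minimal, proving both (i) and (ii).
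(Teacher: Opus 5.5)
Your argument is correct, and the resolution you build is the paper's. The charts $B_{\sigma_i}=\O_K[[S]][u_i,v_i]/(\pi_K-\phi_i)$ that the paper takes from \cite{HN} are exactly your strict transform of $\{\pi_K=\phi\}$ in the toric resolution $\widetilde W$. A few things in your write-up need correcting or making explicit:
\begin{enumerate}[(a)]
\item The charts of $\widetilde W$ are $\O_K[[S]][u_i,v_i]$, not $\O_K[[u_i,v_i]]$.
\item It is $\widetilde W$, not $W$, that has the smooth-toric structure you invoke.
\item Nothing in your argument actually reduces to the stable setting, despite your opening sentence.
\item Regularity does not follow from $\pi_K$ being a regular parameter of $\widetilde W$ alone. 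You also need that $\widetilde\phi$ lies in the ideal of the exceptional locus; this is the paper's remark that $\phi_i$ is divisible by $u_i$ and by $v_i$. That ideal is locally generated by $u_i$, $v_i$ or $u_iv_i$, so $\pi_K-\widetilde\phi$ has nonzero $\pi_K$-component in $\mathfrak m/\mathfrak m^2$.
\end{enumerate}

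The real divergence from the paper is in (ii).

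\textbf{The paper's route.} It uses $(E_i,\mc{X}'_s)=0$. It reads off the multiplicity $d$ of $E_i$ in $\mc{X}'_s$ from the extremal monomial $wu_i^cv_i^d$ of $\ol{\phi}_i$. It then computes the two local contributions $c$ and $d'=db_i-c$ in adjacent charts, which requires analyzing the non-monomial $\phi$.

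\textbf{Your route.} You intersect $E_i$ with $\divi(\chi^m)$ for invariant Laurent monomials $\chi^m$, whose divisors do not depend on $\phi$. To make this rigorous, state explicitly why
$\divi_{\mc{X}'}(\chi^m)=\sum_j\langle m,\rho_j\rangle E_j+(\text{boundary})$, where $\rho_j$ are your rays (your $v_j$). The reason is as follows.
\begin{itemize}
\item Since $\widetilde\phi$ vanishes on each exceptional divisor $E^W_j$ of $\widetilde W$, one has $E^W_j\times_{\widetilde W}\mc{X}'=(E^W_j)_s\cong\P^1_k$, which is reduced.
\item Hence $E^W_j$ restricts to $E_j$ with multiplicity one, even though $E_j$ has multiplicity $d_j$ in $\mc{X}'_s$.
\item You also need $E_j\cdot E_{j+1}=1$, and that the strict transforms of $\{t_1=0\}$ and $\{t_2=0\}$ each meet the end of the chain once, transversally.
\end{itemize}
With these facts, the relation $\rho_{i-1}+\rho_{i+1}=b_i\rho_i$ gives $(E_i,E_i)=-b_i$.

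\textbf{Comparison.} Your route gives a purely toric, $\phi$-independent computation. It even shows that $(E_i,E_i)$ on $\mc{X}'$ equals its value on the toric surface $\widetilde W_s$. The paper's route stays inside $\mc{X}'$ and yields the special-fibre multiplicities as a by-product. Your Step 4 on minimality is the same as in the paper.
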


\begin{proof}
   If $\mc{X}$ admits a stable setting, then this is
\cite[Proposition~4.4.2.5]{HN}\footnote{In \cite{HN}, it is assumed
  that $\cha{k}$ does not divide $a_1$ or $a_2$ (called $m_1$ and
  $m_2$ there), but this is only used to define the number $r$, and is
  not used anywhere else in the proof.  Since we define $r$ explicitly
  in our proposition, this assumption is unnecessary.}.
We now prove the result in the
non-stable setting case.
The proof is still
parallel to that of \cite[Proposition~4.4.2.5]{HN} and we adopt its
notation (our $\nu$ plays the role of $n$ in \cite{HN}).  Let $\phi$ be as in
 Definition~\ref{dfn: tame cyclic quotient admitting stable setting}.  The proof of
\cite[Lemma~4.4.2.2]{HN} goes through exactly, replacing
$wt_1^{m_1}t_2^{m_2}$ in \cite{HN} with our $\phi$. As in the proof of \cite[Proposition~4.4.2.5]{HN}, set $u_0 = t_1^{\nu}$, $v_0 =
t_1^{-r}t_2$, and define inductively $u_i = v_{i-1}^{-1}$ and $v_i =
u_{i-1}v_{i-1}^{b_i}$ for $i \geq 1$. 
Letting $S$ be the set of monomials in $t_1$ and $t_2$ invariant under
the $\ints/\nu \ints$-action (\cite[Lemma~4.4.2.2]{HN}), one defines 
$$B_{\sigma_i} = \mc{O}_K[[S]][u_i, v_i]/(\pi_K - \phi_i)
\subseteq \mc{O}_K[[u_i, v_i]]/(\pi_K - \phi_i)$$ as
in the proof of \cite[Proposition~4.4.2.5]{HN}, 
where $\phi_i$ is just $\phi$ written in terms of the variables
$u_i$ and $v_i$.  Note that each monomial of the form
$wt_1^{m_1}t_2^{m_2}$ in $\phi$ (with $w \in \mc{O}_K$) can be written as
$wu_i^{\mu_{i+1}}v_i^{\mu_i}$ as part of $\phi_i$, where the $\mu_i$ are integers defined as in the
proof of \cite[Proposition~4.4.2.5]{HN} applied to the case $\phi =
wt_1^{m_1}t_2^{m_2}$.  In particular, $\mu_0 \geq 0$ and $\mu_i > 0$ for all $i \geq 1$
as in \cite{HN}, which means that $\phi_i \in \mc{O}_K[[S]][u_i, v_i]$ is
divisible by
$u_i$ for $0 \leq i \leq L-1$ and by $v_i$ for $1 \leq i \leq L$.  


Let $\mc{X}'$ be obtained from gluing together the $\Spec
B_{\sigma_i}$ by identifying $u_i$ with $v_{i-1}^{-1}$ and $v_i$ with
$u_{i-1}v_{i-1}^{b_i}$ for $i \geq 1$.  The proof that $B_{\sigma_i}$ is a regular ring and
that $u_i$, $v_i$ form a regular system of parameters now goes through
verbatim as in the proof of \cite[Proposition~4.4.2.5]{HN}, as does the proof that the exceptional divisors $E_1,
\ldots, E_L$ are all isomorphic to $\proj^1_k$.  The divisor $E_i$ is
fully contained in $\Spec B_{\sigma_i} \cup \Spec B_{\sigma_{i-1}}$,
with $E_i \cap \Spec B_{\sigma_i} = \divi(v_i)$ and
$E_{i} \cap \Spec B_{\sigma_{i-1}} = \divi(u_{i-1})$. The point
$v_{i-1} = 0 \in E_i \cap \Spec B_{\sigma_{i-1}}$ is the unique point of
$E_i$ not contained in $\Spec B_{\sigma_i}$.
The fact that
$E_i$ meets $E_{i+1}$ transversely for $1 \leq i \leq L-1$ also
follows as in loc.\;cit.\;by ignoring the factors of $\phi_i$
other than powers of $u_i$ and $v_i$.  So $\mc{X}' \to \mc{X}$ is a
resolution of singularities whose exceptional divisor is a chain of
transversely meeting $\proj^1_k$'s.  It remains to show it is the
\emph{minimal} resolution,
which we do by showing that $(E_i, E_i) = -b_i \leq -2$.  This will prove (i) and (ii) together.

Let $\mc{X}_s'$ be the special fiber of $\mc{X}'$.  For
  $1 \leq i \leq L$, we begin by calculating the multiplicity of $E_i$ in
  $\mc{X}'_s$.  Let $\ol{\phi}_i \in k[[S]][u_i, v_i] \subseteq
  k[[u_i, v_i]]$ be the
  reduction of $\phi_i$ modulo $\pi_K$. By Definition~\ref{dfn: tame
    cyclic quotient admitting stable setting}, $\ol{\phi}_i$ is nonzero.  Let $wu_i^cv_i^d$ be the monomial
term of $\ol{\phi}_i \in k[[u_i, v_i]]$ for which
$d$ is minimal, and for which $c$ is maximal given this minimal $d$.\footnote{This
  maximum is attained since $u_i$, being dual to $e_i \in \rats_{>0}
  \times \rats_{>0}$ in the proof of
  \cite[Proposition 4.4.2.5]{HN}, has a negative power of either $t_1$
  or $t_2$ (in fact, $t_2$), so
 $c$ cannot be arbitrarily high given a fixed $d$.} Then there exists $\alpha \in
k[u_i]$ with degree less than $c$, as well as $\beta \in
k[[S]][u_i, v_i]$ divisible by $v_i$ 
such that 
$$\ol{\phi}_i = v_i^d(wu_i^c + \alpha + \beta).$$
Since
$\divi(v_i) = E_i \cap \Spec B_{\sigma_i}$, we have that the
multiplicity of $E_i$ in the special fiber is $d$.

Since $(E_i, \mc{X}'_s) = 0$, we have
$$(E_i, E_i) = -\frac{(E_i, \mc{X}_s' - dE_i)}{d}.$$  
We compute $(E_i, \mc{X}_s' - dE_i)$ as a sum of local intersection
numbers at closed points of $E_i$.  First, we observe that the contribution 
coming from closed points in $E_i \cap \Spec B_{\sigma_i}$ is the degree of $wu_i^c + \alpha + \beta$ as a polynomial in
  $u_i$, after setting $v_i = 0$.  This equals $c$.  It remains to
  compute the contribution coming from the unique closed point of
  $E_i$ not in $\Spec B_{\sigma_i}$.  As was mentioned above, this is
  the point $u_{i-1} = v_{i-1} = 0$
  in $\Spec B_{\sigma_{i-1}}$.

Since  $u_i
= v_{i-1}^{-1}$ and $v_i = u_{i-1}v_{i-1}^{b_i}$, we have
$wu_i^cv_i^d = wu_{i-1}^{c'}v_{i-1}^{d'}$, with $c' = d$ 
and $d' = db_i - c$.
Note that $c'$ is minimal among the $u_{i-1}$-exponents of the terms
of $\ol{\phi}_{i-1}$  and $d'$ is minimal among the
$v_{i-1}$-exponents given this minimal $c'$.  So there exist $\gamma \in k[[v_{i-1}]]$ divisible by $v_{i-1}^{d' +1}$ and $\delta \in
k[[S]][u_{i-1}, v_{i-1}]$ divisible by $u_{i-1}$ such that 
$$\ol{\phi}_{i-1} = u_{i-1}^{c'}(wv_{i-1}^{d'} + \gamma +
\delta).$$  So the contribution to $(E_i,
\mc{X}'_s - dE_i)$ coming
from this point is number of times $v_{i-1}$ divides $(wv_{i-1}^{d'} + \gamma +
\delta)$, after setting $u_{i-1} = 0$.  This equals $d'$.

Summing up, we have $(E_i, \mc{X}'_s - dE_i) = c +
d'$, which means that 
$$(E_i, E_i) = -\frac{c+d'}{d} = -\frac{c + db_i - c}{d} = -b_i. $$
\end{proof}

The following lemma is probably well-known, but we include a proof for
lack of a suitable reference. 
Recall that a simple normal crossings divisor $B$ on an arithmetic surface $\Y$ is a divisor all of
whose irreducible components are regular, and such that at most two irreducible
components of $B$ intersect at any point and all such intersections
are transverse. Similarly, a simple normal crossings point of $B$ is a point $y\in \Supp(B)$ that is a regular point of $B$ or is contained in exactly two irreducible components of $B$, which are regular and meet transversely at $y$. 


\begin{lemma}\label{Lnormalcrossings}
Every regular model of $\proj^1_K$ has simple normal crossings, and the dual graph of its special fiber is a tree.   
\end{lemma}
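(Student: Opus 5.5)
The plan is to reduce to the standard smooth model $\proj^1_{\mc{O}_K}$ by contracting $(-1)$-curves. Then I rebuild an arbitrary regular model by successive blow-ups of closed points, checking at each step that the special fiber stays a simple normal crossings divisor whose dual graph is a tree. As in the rest of the paper, $k$ is algebraically closed (\S\ref{Salgclosedreduction}). I will take "simple normal crossings" to refer to the (possibly non-reduced) special fiber $\mc{Y}_s$, viewed as a divisor. The strategy has three steps: factor through a relatively minimal model, show that model is $\proj^1_{\mc{O}_K}$ up to isomorphism, and control single blow-ups.

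\emph{Step 1 (factorization).} Let $\mc{Y}$ be any regular model of $\proj^1_K$. Since $\mc{O}_K$ is excellent, Castelnuovo's criterion lets me successively contract exceptional curves (regular $\proj^1_k$'s of self-intersection $-1$) in the special fiber. This terminates at a relatively minimal regular model $\mc{Y}_{\min}$ (\cite[\S9.3]{Liubook}). Conversely, the resulting morphism $\mc{Y} \to \mc{Y}_{\min}$ is a composition of blow-ups of closed points. It therefore suffices to prove two things: that $\mc{Y}_{\min}$ has special fiber $\proj^1_k$ with multiplicity $1$, and that the snc and tree properties survive a blow-up at a closed point.

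\emph{Step 2 ($\mc{Y}_{\min}$ is smooth).} I use adjunction; this is the step I expect to be the main obstacle, since it is where genus $0$ really enters. Let $F = \mc{Y}_{\min,s}$ and let $K_{\mc{Y}}$ denote a canonical divisor. Then $K_{\mc{Y}}\cdot F = 2g-2 = -2 < 0$, so some component $C$ of $F$ satisfies $K_{\mc{Y}}\cdot C < 0$.

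Suppose first that $F$ is reducible. Then $C^2 < 0$ by Zariski's lemma, and adjunction
\[ 2p_a(C) - 2 = C^2 + K_{\mc{Y}}\cdot C < 0 \]
forces $p_a(C) = 0$ and $C^2 = K_{\mc{Y}}\cdot C = -1$. Since $k$ is algebraically closed, $C$ is then an exceptional curve, contradicting relative minimality. Hence $F = dC$ with $C$ irreducible.

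Next, $K$ has a rational point. By properness and regularity it extends to a section, which meets $F$ with intersection number $1$, so $d = 1$. Finally, $p_a(F) = g(\proj^1_K) = 0$ by flatness. So $F$ is an integral curve of arithmetic genus $0$ over $k = \ol{k}$, hence $F \cong \proj^1_k$. In particular $\mc{Y}_{\min} \to \mc{O}_K$ is smooth, and the claim holds for it trivially (one component, graph a point).

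\emph{Step 3 (blow-ups).} Let $\mc{Y}$ be regular with snc special fiber, every component isomorphic to $\proj^1_k$, and tree dual graph. Let $\mc{Y}' \to \mc{Y}$ be the blow-up at a closed point $p$. The exceptional divisor $E$ is isomorphic to $\proj^1_k$.

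Each component $C$ through $p$ is regular at $p$, so its strict transform $C'$ is isomorphic to $C$ and meets $E$ transversally at the single point corresponding to the tangent direction of $C$. By snc, either one or two components pass through $p$. If two, they have distinct tangent directions, so their strict transforms become disjoint and meet $E$ at distinct points. No point of $\mc{Y}'$ other than those already accounted for lies on three components.

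On the dual graph, this either attaches the new vertex $E$ as a leaf, or subdivides the edge between the two components through $p$ by inserting $E$. Both operations preserve being a tree. Inducting along the factorization from Step 1 proves the lemma.
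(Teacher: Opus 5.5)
Your proof is correct, but it reaches the smooth model by a different route than the paper.

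\textbf{The paper's route.} The paper picks a $K$-rational point and lets $\ol{V}$ be the multiplicity-one component it specializes to. It cites \cite[Lemma~7.1]{ObusWewers} to say that contracting every component other than $\ol{V}$ yields a model with special fiber $\proj^1_k$. It then cites \cite[Theorem~9.2.2]{Liubook} to factor $\mc{Y}\to\mc{Y}^{sm}$ into point blowups, and \cite[Lemma~9.2.31]{Liubook} for preservation of simple normal crossings. It simply asserts that the tree property survives these blowups.

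\textbf{Your route.} You run Castelnuovo contractions down to an arbitrary relatively minimal model. This gives the factorization into point blowups for free, since each contraction is inverse to a point blowup. You then prove that any relatively minimal regular model of $\proj^1_K$ is smooth, using $K_{\mc{Y}}\cdot F=-2$, Zariski's lemma and adjunction. The rational point is used only to force $d=1$ when $F=dC$. Finally, you verify the snc and tree properties under a single point blowup by hand: the new vertex is either attached as a leaf or subdivides an edge. Here you correctly use that a tree has no multiple edges, so two components through $p$ meet only at $p$.

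\textbf{Comparison.} The paper's version is shorter, and it controls which component survives: any component through which a rational point specializes. Yours is self-contained modulo standard surface theory. It also supplies the tree-preservation argument that the paper leaves implicit, and it yields the slightly more intrinsic statement that every relatively minimal regular model of $\proj^1_K$ is smooth.

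One small inaccuracy: your plan announces that $\mc{Y}_{\min}$ is $\proj^1_{\mc{O}_K}$ up to isomorphism, but Step~2 only shows it is smooth with special fiber $\proj^1_k$. That is all the argument needs.
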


\begin{proof}

Let $\mc{Y}$ be any regular model of $\proj^1_K$ and let 
$\mc{Y}_s$ be the special fiber. Pick a $K$-rational point on $\proj^1_K$; it specializes to a
multiplicity $1$ component of $\mc{Y}_s$, say $\ol{V}$.  By
\cite[Lemma~7.1]{ObusWewers}, the
contraction of all components of $\mc{Y}_s$ other than $\ol{V}$ has
$\proj^1_k$ as its special fiber, so is a
smooth model $\mc{Y}^{sm}$ of $\proj^1_K$.  By
\cite[Theorem~9.2.2]{Liubook}, the contraction morphism $\mc{Y} \to
\mc{Y}^{sm}$ is made up of a finite sequence of blowups along closed
points.  Since the dual graph of the special fiber of $\mc{Y}^{sm}$
is a tree, the same holds for the dual graph of the special fiber of
$\mc{Y}$.  Furthermore, \cite[Lemma~9.2.31]{Liubook} shows that
$\mc{Y}$ has simple normal crossings.
\end{proof}

\begin{prop}\label{prop: structure of points upstairs}
    Let $\mc{Y}$ be regular model of $\P^1_K$,
    and let $D$ be the branch divisor for the normalization
    $\mc{X}\to\mc{Y}$ of $\mc{Y}$ in $K(X)$. Assume that the
    branch divisor $D$ is a simple normal crossings divisor.
    \begin{enumerate}[\upshape (i)]
        \item Every non-regular point of $\mc{X}$ lies over the intersection point of two irreducible components of $D$.
        \item Every closed point of $\mc{X}$ where the special fiber is not a simple normal crossings divisor lies over a horizontal irreducible component of $D$.
        \item\label{prop:whatappears} Let $y\in\mc{Y}$ be the intersection point of two
          irreducible components $D_1,D_2$ of $D$. For $i=1,2$, let
          $\nu_i \colonequals \ord_{D_i}(f), e_i \colonequals \frac{n}{\gcd(n,\nu_i)}$ and let $\kappa$ be the least residue mod $\gcd(e_1,e_2)$ of $-\left(\frac{\nu_1}{\gcd(\nu_1,n)}\right)^{-1}\frac{\nu_2}{\gcd(\nu_2,n)}$. There are
          $\frac{n}{\lcm(e_1,e_2)}$ points of $\mc{X}$ lying over $y$, and for such a point $x\in\mc{X}$, the scheme $\Spec \hat{\O}_{\mc{X},x}$ is a tame cyclic quotient singularity of type
        \[\frac{1}{\gcd(e_1,e_2)}(1,\kappa).\]
        \item Keep the notation of (iii) and suppose $D_1,D_2$ are contained in $\mc{Y}_s$. If $x\in\mc{X}$ lies over $y$, then the tame cyclic quotient singularity $\Spec \hat{\O}_{\mc{X},x}$ admits a stable setting.
    \end{enumerate}
  \end{prop}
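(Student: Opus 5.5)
The whole argument is local on $\mc{Y}$. Since $\mc{Y}$ is regular, the normalization $\mc{X}$ of $\mc{Y}$ in $K(X)=K(\mc{Y})[y]/(y^n-f)$ can be computed on the completion $\hat{\O}_{\mc{Y},y}$ at each closed point $y$. Normalization commutes with completion because $\O_K$ is excellent, and $k$ is algebraically closed. Because $D$ is simple normal crossings and $\mc{Y}$ is regular, there are three possibilities for a closed point $y$.
\begin{itemize}
\item[(a)] $y\notin D$: then $\hat{\O}_{\mc{Y},y}\cong \O_K[[s,t]]/(\pi_K-\psi)$ and $f$ is a unit. Since $\cha k\nmid n$ and units in a complete local ring with algebraically closed residue field are $n$th powers, the cover is étale over $y$.
\item[(b)] $y$ lies on exactly one component $D_1$: we may choose a regular system of parameters $s,t$ at $y$ with $D_1=\divi(s)$. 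Then $f=u s^{\nu_1}\cdot(\text{$n$th power})$ with $u$ a unit, and after absorbing $u$ we may write $f= s^{\nu_1}$ up to $n$th powers.
\item[(c)] $y\in D_1\cap D_2$: here $D_1=\divi(s)$, $D_2=\divi(t)$, and $f=s^{\nu_1}t^{\nu_2}$ up to $n$th powers. One of $s,t$ may be a multiple of $\pi_K$ times a unit if $D_i$ is vertical.
\end{itemize}
The normalization of $\hat{\O}_{\mc{Y},y}[z]/(z^n-s^{\nu_1}t^{\nu_2})$ is then a purely combinatorial Kummer computation. It is the product of $\gcd(n,\nu_1,\nu_2)$-type copies of the normalization of a reduced Kummer extension, and each copy is the invariant ring of a diagonal $\mu_m$-action on a regular ring obtained by extracting roots of $s$ and $t$.

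For (b), the normalization is $\hat{\O}_{\mc{Y},y}[s^{1/e_1}]$ up to a product of $n/e_1$ copies. This is regular, with parameters $s^{1/e_1},t$. This proves (i) outside intersection points. For (ii), note that in case (b) the special fiber upstairs is the pullback of $\mc{Y}_s$. If $D_1$ is vertical, the special fiber near $y$ is locally (a unit times) a power of $s$ times components of $\mc{Y}_s$ meeting transversally. Taking $e_1$th roots of $s$ preserves simple normal crossings because $\mc{Y}_s$ is snc by \Cref{Lnormalcrossings}. The same holds in case (c) when both $D_i$ are vertical, using (iv). So failure of snc can only happen over horizontal $D_1$, where $\mc{Y}_s$ may be tangent to or meet $D_1$ non-generically.

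For (iii), set $R=\hat{\O}_{\mc{Y},y}$ and $S=R[s^{1/e_1},t^{1/e_2}]$. The ring $S$ is regular of the same form $\O_K[[s',t']]/(\pi_K-\phi)$, with $\phi$ obtained by substituting $s=s'^{e_1}$ and $t=t'^{e_2}$. It is Galois over $R$ with group $\mu_{e_1}\times\mu_{e_2}$, and our cover's function field corresponds to a subgroup. The function field of the normalization is generated by $y$. Inside $\mu_{e_1}\times\mu_{e_2}$, the stabilizer of $y=s'^{\nu_1'}t'^{\nu_2'}$ (where $\nu_i'=\nu_i/\gcd(\nu_i,n)$ after normalizing) is a subgroup. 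Its intersection with inertia at the closed point of $S$ is cyclic of order $\gcd(e_1,e_2)$, generated by an element acting as $(\zeta,\zeta^{\kappa})$ after choosing coordinates. Solving $\zeta^{a\nu_1'}\zeta^{b\nu_2'}=1$ gives the ratio $b/a\equiv-\nu_1'^{-1}\nu_2'$, which produces $\kappa$. Count orbits: the residue field is algebraically closed, so the number of points over $y$ equals the degree $n$ divided by the local degree $\lcm(e_1,e_2)$. Hence $\hat{\O}_{\mc{X},x}=S^{\Z/\gcd(e_1,e_2)}$, which matches \Cref{dfn: tame cyclic quotient admitting stable setting}. When $\gcd(e_1,e_2)=1$ the point is regular, completing (i).

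For (iv), if both $D_i$ are vertical, then $\pi_K$ is a unit times $s^{a}t^{b}$ in $R$. Since $\mc{Y}_s$ is snc at $y$ and units have roots, we can absorb the unit, giving $\pi_K=s^at^b$. Then $\phi=s'^{ae_1}t'^{be_2}$ is a monomial, which is a stable setting.

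The main obstacle is the bookkeeping in (iii). It requires identifying the correct cyclic subgroup and its weights $(1,\kappa)$, including the normalization of the exponent of the generator so that the first weight is $1$. It also requires handling horizontal components, where $\pi_K$ is not a monomial in $s,t$, and checking that the argument never needs that.
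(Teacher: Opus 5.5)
Your argument is correct in substance, but it takes a more self-contained route than the paper. The paper does not compute (i) and (ii) at all; it quotes \cite[Theorem 1.5]{KWsuper} and \cite[Proposition 2.8(i)]{OSNew}. For (iii) it separates the count from the type:
\begin{itemize}
\item \emph{Count.} The stabilizer of $x$ contains the subgroup $H\le\Z/n$ generated by the inertia groups $\Z/e_1,\Z/e_2$ of $D_1,D_2$, of order $\lcm(e_1,e_2)$. Purity of the branch locus then shows $\mc{X}/H\to\mc{Y}$ is \'etale over $y$.
\item \emph{Type.} After this \'etale reduction to $\lcm(e_1,e_2)=n$, it renormalizes the Kummer generator so that $\nu_1=n/e_1$. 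It then writes down an explicit generator of $\Z/\gcd(e_1,e_2)$ acting on $A=R[t_1,t_2]/(t_1^{e_1}-s_1,t_2^{e_2}-s_2)$.
\end{itemize}

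You instead work once and for all in the $\mu_{e_1}\times\mu_{e_2}$-Galois cover $S$ of $R$. The local algebra splits into $\gcd(n,\nu_1,\nu_2)=n/\lcm(e_1,e_2)$ field factors. Each is the fixed field of the kernel $H$ of $(\alpha,\beta)\mapsto\alpha^{\nu_1'}\beta^{\nu_2'}$, and $S^H$ is local. So the count, the order $\gcd(e_1,e_2)$, and the weights all come out of one Kummer computation, with no purity and no reduction step. The same computation also gives (i) and (ii). The paper's route is shorter and identifies the stabilizer conceptually as generated by the two inertia groups. Yours needs nothing beyond Kummer theory in the complete regular local ring $R$.

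Three points to fix when you write it up:
\begin{itemize}
\item Say why $H$ is cyclic: its projection to $\mu_{e_2}$ is injective because $\nu_1'$ is prime to $e_1$.
\item Your ratio is inverted. If $(\zeta^a,\zeta^b)$ acts on $(s',t')$, invariance of $s'^{\nu_1'}t'^{\nu_2'}$ gives $b/a\equiv-\nu_1'\nu_2'^{-1}$. So $\kappa=-\nu_1'^{-1}\nu_2'$ is the weight on $s'$ when $t'$ has weight $1$, and you must list the coordinates as $(t',s')$. This is the paper's ``switching the roles of $t_1$ and $t_2$''. It is harmless anyway, since $\frac{1}{m}(1,r)$ and $\frac{1}{m}(1,r^{-1})$ give the same singularity.
\item In (iv), absorbing the unit in $\pi_K=us^at^b$ requires an $a$-th or $b$-th root of $u$. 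Hensel's lemma supplies this only if $p\nmid a$ or $p\nmid b$. The paper drops its unit $\beta$ in exactly the same way, so this is a shared imprecision rather than a flaw of your approach.
\end{itemize}
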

\begin{proof}
Part (i) follows from \cite[Theorem 1.5]{KWsuper}, since the only
singular points of $D$ are those lying on two or more components.
Part (ii) follows from \cite[Proposition 2.8(i)]{OSNew}.

To prove the first assertion of part (iii), let $A_1$ and $A_2$ be irreducible components of
$\gamma^{-1}(D_1)$ and $\gamma^{-1}(D_2)$, respectively, passing
through a point $x$ above $y$.  By the definition of $e_1$ and $e_2$, the stabilizer of $x$ in $\ints/n$ contains the
group $H$ generated by $\ints/e_1$ and $\ints/e_2$ in $\ints/n$.
Since $H$ has
order lcm$(e_1, e_2)$, it suffices to show that $y$ is not a branch
point of $p \colon \mc{X}/H =: \mc{Z} \to \mc{Y}$.  Since $\mc{Y}$ is regular and
$\mc{X}$ is normal, this follows by purity of the branch locus (see, 
e.g., \cite[Theorem 5.2.13]{Szamuely}).

We now prove the second assertion of part (iii).  Since $p \colon \mc{Z} \to \mc{Y}$
is \'{e}tale over $y$, we can replace $y$ with any point $z$ of
$p^{-1}(y)$ and the $D_i$ with irreducible components of the
$p^{-1}(D_i)$ passing through $z$ without changing the $e_i$. So
we may assume $\mc{Z} = \mc{Y}$, or in other words, that lcm$(e_1, e_2) = n$.

Since $\hat{\mc{O}}_{\mc{Y}, y}$ is a regular local ring, 
the height one prime ideals corresponding to the restriction of $D_1$
and $D_2$ to $\hat{\mc{O}}_{\mc{Y}, y}$ are principal.  Let $s_1$ and
$s_2$ be their respective generators. By Hensel's lemma, all units in
$\hat{\mc{O}}_{\mc{Y}, y}$ are $n$th powers, so by Kummer theory, the extension
$\Frac(\hat{\mc{O}}_{\mc{Y}, y}) \hookrightarrow \Frac(\hat{\mc{O}}_{\mc{X}, x})$ is
given by an equation of the form
\begin{equation}\label{Es}
  s^n = s_1^{\nu_1}s_2^{\nu_2}.
\end{equation}
In fact, after replacing $s$
with an appropriate prime-to-$n$ power and multiplying it by
appropriate powers of $s_1$ and $s_2$, we may further assume that $\nu_1 =
n/e_1$ and $\nu_2 \in \{0, \ldots, n-1\}$, with $\nu_2$ divisible by
$n/e_2$.  On the other hand, since $\mc{Y}$ is regular with normal
crossings by Lemma~\ref{Lnormalcrossings}, we can write $\hat{\mc{O}}_{\mc{Y},y} \cong
\mc{O}_K[[u_1, u_2]]/(\pi_K - \beta u_1^au_2^b)$ for some $a, b \in
\ints_{\geq 0}$ and $\beta \in \mc{O}_K[[u_1,u_2]]^{\times}$.  Since $s_1$
and $s_2$ generate the maximal ideal of $\hat{\mc{O}}_{\mc{Y},y}$, there exist power series $\alpha_1$, resp.\ $\alpha_2$ 
over $\mc{O}_K$ such that $\alpha_1(s_1, s_2) =  u_1$ and
$\alpha_2(s_1, s_2) = u_2$.
Consider the ring
$$A \colonequals \hat{\mc{O}}_{\mc{Y}, y}[t_1,t_2]/(t_1^{e_1} - s_1, t_2^{e_2} -
s_2) \cong \mc{O}_K[[t_1, t_2]]/(\pi_K - \beta(t_1^{e_1},t_2^{e_2}) \alpha_1(t_1^{e_1},t_2^{e_2})^{a}\alpha_2(t_1^{e_1},t_2^{e_2})^{b}).$$  The maximal ideal of $A$ is
generated by $t_1$ and $t_2$, so $A$ is regular.  Furthermore,
$t_1^nt_2^{\nu_2e_2} = s_1^{\nu_1}s_2^{\nu_2} = s^n$ by \eqref{Es}.   So, up to
multiplication by an $n$th root of unity, we have
\begin{equation}\label{Ealternates}
  s = t_1t_2^{\nu_2e_2/n}.
\end{equation}

Let $\zeta$ be a primitive
$(e_1e_2/n)$-th root of unity and consider the $\ints/(e_1e_2/n)$-action on $A$ fixing $\mc{O}_K$ given by
$\sigma(t_1) = \zeta^{-\nu_2e_2/n}t_1$ and $\sigma(t_2) = \zeta t_2$.
By \eqref{Ealternates}, $\sigma$ fixes $s$, so by the normality of
$A^{\langle \sigma \rangle}$ and $\hat{\mc{O}}_{\mc{X}, x}$ we have
$A^{\langle \sigma \rangle} = \hat{\mc{O}}_{\mc{X}, x}$.  Thus
(switching the roles of $t_1$ and $t_2$), 
$\Spec \hat{\mc{O}}_{\X, x}$ is a tame cyclic quotient of type
$$\frac{1}{e_1e_2/n}(1, - \nu_2e_2/n).$$ Since $\lcm(e_1, e_2) = n$, we
have $e_1e_2/n = \gcd(e_1, e_2)$, so the type is $\frac{1}{\gcd(e_1,e_2)}(1, - \nu_2e_2/n)$.  Since
$\gcd(\nu_2,n) = n/e_2$ and $\nu_1 = n/e_1$, the proof of part (iii)
is complete.

Lastly, in the situation of part (iv), we have without loss of
generality that $u_1 = s_1$ and $u_2 = s_2$, so $A \cong
\mc{O}_K[[t_1,t_2]]/(\pi_K - t_1^{e_1a}t_2^{e_2b})$, implying
that $\hat{\mc{O}}_{\mc{X},x}$ admits a stable setting.
\end{proof}

Let $y,e_1,e_2,\kappa$ be as in Proposition~\ref{prop: structure of points upstairs}~\ref{prop:whatappears}. In light of Proposition~\ref{prop: structure of points upstairs}~\ref{prop:whatappears},
we make the following definition. 
\begin{defn}\label{dfn: Hirzebruch--Jung length of point}
    The \textit{Hirzebruch--Jung length of $y$} is the nonnegative integer
    \[\HJL(y)=L(\gcd(e_1,e_2),\kappa)\] where $L$ is the
    Hirzebruch--Jung length of a pair of integers from Definition
    \ref{dfn: Hirzebruch--Jung length of pair}.  
\end{defn}

\begin{remark}\label{rmk: bound on HJ length of point}
    Keep the notation of Definition \ref{dfn: Hirzebruch--Jung length
      of point}.  By Remark \ref{rmk: bound on HJ length of pair}, we
    have \[\HJL(y)\leq\gcd(e_1,e_2)-1. \]
    Furthermore, if $x
      \in \mc{X}$ lies above $y \in \mc{Y}$, then by
      Proposition~\ref{prop: resolution of tame cyclic quotient
        singularities}, the
    Hirzebruch--Jung length of $y$ is the number of
    irreducible components in the exceptional divisor of the
    minimal resolution of $\Spec \hat{\O}_{\X,x}$. By the proof of \cite[Proposition 3.5]{ObusWewers}, the same is true for the minimal resolution of $\X$ in $x$.
\end{remark}

\section{An inclusion--exclusion formula for the Artin conductor}\label{SEulerchar}


In this section, we first recall the definition of the Artin conductor.  Note that the definition works not only for regular models, but also for normal models. Then after recalling
some properties of the compactly supported Euler characteristic
function in \Cref{Peulerchar}, we use these to derive a formula
for the Artin conductor for certain special normal models
(Proposition~\ref{PartinNormalcalculation} and
Proposition~\ref{Partincalculation}).

Let $X$ be a smooth, projective, geometrically integral curve of genus $g \geq 1$ defined
over $K$.  Let $\mc{X}$ be a proper, flat, \emph{normal} $\mc{O}_K$-scheme with generic fiber $X$. 
\begin{defn}\label{DArtNormal}
The \emph{Artin conductor} associated to the model $\mc{X}$ is defined by
\[ \Art' (\mc{X}/\mc{O}_K) \colonequals  \chi(\mc{X}_{\overline{K}}) - \chi(\mc{X}_{k}) - \delta(X/K) ,\]
where $\chi$ is the Euler characteristic for the $\ell$-adic cohomology and $\delta(X/K)$ is the Swan conductor associated to the $\ell$-adic representation $\Gal (\overline{K}/K) \rightarrow \Aut_{\Q_\ell}
(H^1_{\mathrm{et}}(\mc{X}_{\overline{K}}, \Q_\ell))$ ($\ell \neq \cha k$).  Let \[\Art^+ (\mc{X}/\mc{O}_K) \colonequals -\Art' (\mc{X}/\mc{O}_K).\]
\end{defn}

\subsection*{The compactly supported Euler characteristic} Let $F$ be an algebraically closed field (in what follows, $F$ will be
$\ol{K}$ or $k$). Let $\mathcal{V}_F$ denote the set of isomorphism
classes of separated $F$-schemes of finite type. Let $\ell$ be a prime such that $\ell \neq \cha(F)$. For any
$F$-variety $X$ and integer $i \geq 0$, let $H^i_c(X,\Q_\ell)$ denote
the $i^{\mathrm{th}}$ $\ell$-adic \'{e}tale cohomology group with
compact support. The compactly supported Euler characteristic is the
function 
 \begin{align*} \chi \colon \mathcal{V}_F &\rightarrow \Z \\ X &\mapsto \sum_{i \geq 0} (-1)^i\dim H^i_c(X,\Q_{\ell}). \end{align*}
It has the following properties.
\begin{properties}\label{Peulerchar}\hfill 
\begin{enumerate}[label=(\arabic*)]
\item\label{Etopinv} \textbf{Topological invariance:}(cf. \cite[Chapter~1, Section~2, p.~22]{Milne}) For any $X \in \mathcal{V}_F$, we have $\chi(X) = \chi(X_{\mathrm{red}})$, where $X_{\mathrm{red}}$ is the underlying reduced subscheme of $X$.
\item\label{Eadditivity} \textbf{Additivity:}(cf. \cite[Chapter~1, Section~9, p.~65]{Milne}) If $X = \bigsqcup Z_i$ is a decomposition of $X$ into a disjoint union of locally closed subschemes, then $\chi(X) = \sum \chi(Z_i)$.
\item\label{Epoint} \textbf{Dimension axiom:}(cf. \cite[Chapter~1, Section~9, p.~65]{Milne}) $\chi(\Spec(F)) = 1$.
\item\label{Echain} \textbf{Cohomology of the projective line:}(cf. \cite[Chapter~1, Section~14, p.~90]{Milne})
  $\chi(\P^1_{\overline{F}}) = 2$. Combined with the dimension axiom
  and additivity, it follows that the compactly supported Euler
  characteristic of a tree of $N$ projective lines over $F$ is $N+1$.
\item\label{ERHformula} \textbf{\'{E}tale Riemann-Hurwitz formula:}(cf. \cite[Chapter~V, Remark~2.15~(c)]{MilnePrint}) Let $Y$ be a
  regular $F$-scheme and let $f \colon X \rightarrow Y$ be a finite
  \'{e}tale map of $F$-schemes of degree $d$.  Then $\chi(X) = d
  \chi(Y)$.
\end{enumerate}
\end{properties}


\begin{example}\label{example:optimal} Let $X$ be the curve with affine equation $y^n=x^n-\pi_K$. The geometric generic fiber $\X_{\c{K}}$ is smooth of genus $g=\frac{(n-2)(n-1)}{2}$, and the special fiber $\X_k$ consists of $n$ irreducible components isomorphic to $\P^1_k$ meeting pairwise transversely at a single point. By Properties \ref{Peulerchar}, we see $\chi(\X_{\c{K}})=3n-n^2$ and $\chi(\X_k)=n+1$, and by Proposition \ref{Pswanconductor}, the Swan conductor $\delta(X/K)$ of $X$ is $0$, so that $\Art^+(X/K)=(n-1)^2$. 
\end{example}

\subsection{Artin conductor of the normalization of a regular model of $\proj^1_K$
  with normal crossings branch divisor}\label{Sfirstartinformula}

For the rest of \S\ref{Srephrasing}, let $X$ be the smooth projective $K$-curve with affine
equation $y^n = f(x)$ satisfying \Cref{Afform}.
In \S\ref{Sfirstartinformula}, we make the following assumption:

\begin{assumption}\label{Ahypoonmodel}
We are given a regular model $\mc{Y}$ of $\P^1_K$ whose normalization
$\mc{X} \to \mc{Y}$ in $K(X) = K(x)[y]/(y^n-f(x))$ has branch locus $B$ that
is a simple normal crossings divisor.
  \end{assumption}
 
 \begin{notation}\label{Nmodel} Under Assumption~\ref{Ahypoonmodel},
   let $R$ be the inverse image of $B$ in $\mc{X}$.
  \begin{itemize}
 \item Write $\mc{Y}_s$ and $\mc{Y}_{\ol{\eta}} \cong \proj^1_{\ol{K}}$ for the special and geometric generic fibers of $\mc{Y}$, respectively. Similarly define $B_s,B_{\ol{\eta}},R_s$ and $R_{\ol{\eta}}$.
 \item For every $e \mid n$, let $N_e$ be the number of irreducible components of $\mc{Y}_s$ with ramification index $e$. Let $N = \sum_{e \mid n} N_e$. 
 \item For every pair of integers $e,e'$ dividing $n$, let $c_{ee'}$
   be the number of pairwise intersections of an irreducible component
   of $B$ with ramification index $e$ with
   another such component of ramification index $e'$. 
\end{itemize}


Fix $f$ as in \Cref{Afform}. For each $1 \leq i \leq
r$, recall that we have also chosen a root $\alpha_i$ of $f_i$ and we set $K_i = K(\alpha_i)$.
 \end{notation}

\begin{prop}\label{PartinNormalcalculation} Work under Assumption~\ref{Ahypoonmodel} and Notation~\ref{Nmodel}.
  Then $\Art^+(\mc{X}/\mc{O}_K)$ equals
$$ n \left(N-1 \right)-\sum_{e \mid n} \left(n-\frac{n}{e} \right) \left[2N_e -  \sum_{\substack{i \\ \gcd(d_i,n) = n/e}} \Delta_{K_i/K} \right]+\sum_{\substack{e \leq e' \\ e \mid n, e' \mid n}} c_{ee'} \left[ n+\gcd \left(\frac{n}{e},\frac{n}{e'} \right)-\frac{n}{e}-\frac{n}{e'} \right].$$
\end{prop}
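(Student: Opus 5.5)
The plan is to compute the three terms in Definition~\ref{DArtNormal} separately and then regroup. Recall $\Art^+(\mc{X}/\mc{O}_K) = -\chi(\mc{X}_{\ol{K}}) + \chi(\mc{X}_k) + \delta(X/K)$.

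\emph{Generic fibre and Swan conductor.} Since $n \mid \deg(f)$ by Assumption~\ref{Afform}, $\infty$ is not a branch point. Each root of $f_i$ is a branch point with $\gcd(n,d_i)$ points above it, so Riemann--Hurwitz gives
\[ -\chi(\mc{X}_{\ol{K}}) = 2g-2 = -2n + \sum_i (n-\gcd(n,d_i))\deg(f_i). \]
Adding the formula for $\delta(X/K)$ from Proposition~\ref{Pswanconductor}, the $\deg(f_i)$ terms cancel, and we get
\[ -\chi(\mc{X}_{\ol{K}}) + \delta(X/K) = -2n + \sum_i (n-\gcd(n,d_i))(\Delta_{K_i/K}+1). \]
Writing $e_i = n/\gcd(n,d_i)$, the coefficient $n-\gcd(n,d_i)$ equals $n - n/e_i$. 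Grouping by $e$ already produces the $\Delta_{K_i/K}$ terms of the claimed formula, together with a leftover $\sum_i (n-n/e_i)$.

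\emph{Special fibre, setup.} By topological invariance we may replace $\mc{X}_k$ and $\mc{Y}_s$ by their reductions. We stratify $\mc{Y}_s$ into three kinds of pieces:
\begin{itemize}
\item the open parts $C^\circ$ of the components $C$ of $\mc{Y}_s$, obtained by removing all points of $C$ meeting another vertical component or a horizontal component of $B$;
\item the finitely many vertical--vertical intersection points;
\item the points where horizontal components of $B$ meet $\mc{Y}_s$.
\end{itemize}
Several facts about this stratification need to be recorded.
\begin{itemize}
\item By Lemma~\ref{Lnormalcrossings}, the components of $\mc{Y}_s$ form a tree of $N$ copies of $\proj^1_k$, so there are $N-1$ vertical--vertical intersection points.
\item Each horizontal component of $B$ is regular and finite over the Henselian ring $\mc{O}_K$, hence local. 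It therefore meets $\mc{Y}_s$ in exactly one point, lying on a single vertical component, since three branches through a point would contradict the simple normal crossings hypothesis.
\item The horizontal components are exactly the closures of the roots of the $f_i$, one for each $i$, with index $e_i$.
\item Two horizontal components never meet, again by the simple normal crossings hypothesis.
\end{itemize}

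\emph{Special fibre, counting.} Over $C^\circ$, with $C$ of ramification index $e$ (where $e=1$ means $C$ is not branched), the reduced preimage is finite étale of degree $n/e$. Here I will use Kummer theory locally, as in the proof of Proposition~\ref{prop: structure of points upstairs}. So by property~\ref{ERHformula} its Euler characteristic is $(n/e)\chi(C^\circ)$, and $\chi(C^\circ) = 2 - \#(\text{removed points on } C)$.

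Over an intersection point of two branches of indices $e,e'$, with $e$ or $e'$ possibly $1$, Proposition~\ref{prop: structure of points upstairs}\ref{prop:whatappears} gives $n/\lcm(e,e') = \gcd(n/e,n/e')$ points. Over a point lying on only one branch component of index $e$ there are $n/e$ points.

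Summing, $\chi(\mc{X}_k)$ becomes:
\begin{itemize}
\item $\sum_C 2n/e_C$, which regroups as $2\sum_e (n/e)N_e$;
\item for each intersection point of branches with indices $e,e'$ (vertical--vertical or horizontal--vertical), a contribution $\gcd(n/e,n/e') - n/e - n/e'$, where the correction $-n/e$ applies only when that branch is vertical.
\end{itemize}

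\emph{Reconciling with the claimed formula.} To put this in the stated shape, I rewrite $2\sum_e(n/e)N_e = 2nN - \sum_e 2(n-n/e)N_e$. I then add $n$ to each of the $N-1$ vertical--vertical edge terms, which costs $-n(N-1)$; combined with the $2nN-2n$ this produces $n(N-1)$ overall.

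For a horizontal component of index $e$ meeting a vertical component of index $e'$, the local contribution is $\gcd(n/e,n/e') - n/e'$. This equals the bracketed term $n + \gcd - n/e - n/e'$ minus $(n - n/e)$. The leftover $-(n-n/e)$ is exactly cancelled by the $+1$ in $(\Delta_{K_i/K}+1)$ from the first step.

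Finally I will check that the terms with $e=e'=1$ vanish in the bracket, so that $c_{ee'}$ can be taken over all pairs.

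\emph{Main obstacle.} I expect the main obstacle to be this bookkeeping: making sure that unbranched components, vertical--horizontal intersections and the convention on $c_{ee'}$ are all counted consistently. The second delicate point is justifying the fibre cardinalities over the non-intersection strata, namely $n/e$ points and étaleness of the reduced preimage over $C^\circ$.
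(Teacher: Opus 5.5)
\textbf{There is a genuine gap.} It sits in two of your ``Special fibre, setup'' bullets: that a horizontal branch component meets $\mathcal{Y}_s$ at a point lying on only one vertical component, and that two horizontal branch components never meet. Neither follows from Assumption~\ref{Ahypoonmodel}, and both are false. Simple normal crossings is a condition on $B$ alone, and unbranched vertical components are not part of $B$.

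For (b), take $n=2$, $f=x(x-\pi_K)(x-1)(x-2)$ with $\operatorname{char} k\neq 2$, and $\mathcal{Y}=\mathbb{P}^1_{\mathcal{O}_K}$. The special fibre is unbranched, and $V(x)$, $V(x-\pi_K)$ cross transversally at $(x,\pi_K)$, so $B$ is simple normal crossings. Here $\mathcal{X}_k$ is a rational curve with one node, so $\chi(\mathcal{X}_k)=1$ and $\Art^+(\mathcal{X}/\mathcal{O}_K)=1$. The statement obtains this value from $c_{22}=1$. Your incidence-by-incidence count treats $x=0$ as two separate horizontal--vertical points and gives $\chi(\mathcal{X}_k)=4+4(1-2)=0$.

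For (a), blow up $\mathbb{P}^1_{\mathcal{O}_K}$ at $(x,\pi_K)$ and take $f=(x^2-\pi_K)(x-1)(x-2)$. The strict transform of $x^2=\pi_K$ passes through the point where the exceptional curve (branched) meets the strict transform of the old special fibre (unbranched).

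These configurations are not exotic. The first is exactly a multiplicity~$2$ point at which the local $n$-resolution of Definition~\ref{dfn: resolution of local branch divisor} stops immediately, so horizontal--horizontal crossings really do enter $c_{ee'}$ in the applications. Your reconciliation step only produces bracket terms for vertical--vertical and horizontal--vertical crossings, so these terms are never accounted for.

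\textbf{How to repair it.} Do the bookkeeping per point rather than per incidence. Let $p$ run over the closed points of $\mathcal{Y}_s$ lying on two vertical components or on some horizontal component of $B$. Let $V_p$ be the set of vertical components through $p$, and $B_p$ the set of components of $B$ through $p$; then $|B_p|\le 2$ by hypothesis. Let $g_p$ be the number of points of $\mathcal{X}$ over $p$. Thus $g_p$ equals $n$, $n/e_D$, or $\gcd(n/e_D,n/e_{D'})$ according as $B_p$ is empty, $\{D\}$, or $\{D,D'\}$. Your étaleness over the open strata gives
$\chi(\mathcal{X}_k)=\sum_C 2n/e_C+\sum_p\bigl(g_p-\sum_{C\in V_p}n/e_C\bigr)$.
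A direct check of the three cases gives
\[
g_p-\sum_{C\in V_p}\frac{n}{e_C}=-n\bigl(|V_p|-1\bigr)+\beta_p-\sum_{H\in B_p\ \text{horizontal}}\Bigl(n-\frac{n}{e_H}\Bigr),
\]
where $\beta_p=n+\gcd(n/e_D,n/e_{D'})-n/e_D-n/e_{D'}$ if $B_p=\{D,D'\}$ and $\beta_p=0$ otherwise.

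Since $\sum_p(|V_p|-1)=N-1$ and each horizontal component passes through exactly one $p$, summing this identity and adding your generic-fibre computation gives the stated formula. This covers both problematic configurations automatically. Alternatively, follow the paper: split each fibre as $(\mathcal{X}_*\setminus R_*)\sqcup R_*$, and split $B_s$ into $B_s^\circ$ and the crossing points of $B$. That decomposition never needs to know how horizontal components sit relative to unbranched vertical components. The rest of your argument (the Riemann--Hurwitz and Swan step, and the étale counts over the open strata) is fine.
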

\begin{proof}
From Definition~\ref{DArtNormal} and Proposition~\ref{Pswanconductor}, it suffices to show that $\chi(\mc{X}_s)-\chi(\mc{X}_{\overline{K}})$ equals
\begin{equation}\label{Etoprove}
\begin{aligned}
n \left(N-1 \right)-\sum_{e \mid n} \left(n-\frac{n}{e} \right) \left[2N_e -  \sum_{\substack{i \\ \gcd(d_i,n) = \frac{n}{e}}} (\deg(f_i)-1) \right]
\\+\sum_{\substack{e \leq e' \\ e \mid n, e' \mid n}} c_{ee'} \left[ n+\gcd \left(\frac{n}{e},\frac{n}{e'} \right)-\frac{n}{e}-\frac{n}{e'} \right] 
\end{aligned}
\end{equation}

To prove this, we will use Properties~\ref{Peulerchar} of $\chi$ with $F=\overline{K}$ and $F=k$. (We will omit specifying which of the two cases we are in since it is clear from context.)

Since $\cha(k) \nmid n$, it follows that $\mc{X}_s \setminus R_s \rightarrow \mc{Y}_s \setminus B_s$ and $\mc{X}_{\overline{\eta}} \setminus R_{\overline{\eta}} \rightarrow \mc{Y}_{\overline{\eta}} \setminus B_{\overline{\eta}}$ are finite \'{e}tale degree $n$ maps. For $*$ in $\{ s, \overline{\eta} \}$, using Property \ref{Peulerchar}~\ref{ERHformula} we get
\begin{equation*}
 \chi(\mc{X}_* \setminus R_*) = n \chi(\mc{Y}_* \setminus B_*), 
\end{equation*}
and combining this with Property \ref{Peulerchar}~\ref{Eadditivity} applied to the decompositions $\mc{X}_* = R_* \sqcup (\mc{X}_* \setminus R_*)$ and $\mc{Y}_* = B_* \sqcup (\mc{Y}_* \setminus B_*)$, we get
\begin{align}\label{Eeulerdiff1}
\begin{split}
 \chi(\mc{X}_s)-\chi(\mc{X}_{\overline{K}}) &= \chi(\mc{X}_s \setminus R_s)+\chi(R_s)-\chi(\mc{X}_{\overline{\eta}} \setminus R_{\overline{\eta}})-\chi(R_{\overline{\eta}}) \\
 &= n(\chi(\mc{Y}_s \setminus B_s) - \chi(\mc{Y}_{\overline{\eta}} \setminus B_{\overline{\eta}}))+\chi(R_s)-\chi(R_{\overline{\eta}}) \\
 &= n(\chi(\mc{Y}_s)-\chi(B_s) - \chi(\mc{Y}_{\overline{\eta}})+\chi(B_{\overline{\eta}}))+\chi(R_s)-\chi(R_{\overline{\eta}}) \\ 
 &=  n(\chi(\mc{Y}_s)-\chi(\mc{Y}_{\overline{\eta}}))+\left[ n\chi(B_{\overline{\eta}}) -\chi(R_{\overline{\eta}}) \right]+ \left[-n\chi(B_s)+\chi(R_s)\right].
\end{split}
 \end{align}
Since $\mc{Y}_{\overline{\eta}} \cong \P^1_{\overline{K}}$ and
$\mc{Y}_s$ is a tree of $N$ (possibly
non-reduced) projective lines over $k$, using Properties \ref{Peulerchar}~\ref{Etopinv} and \ref{Echain}, we get
\begin{equation}\label{Echinumberofcomp}
 \chi(\mc{Y}_s)-\chi(\mc{Y}_{\overline{\eta}}) = \left( N+1 \right)-2 = N-1.
\end{equation}

 Let $B^\circ \subset B$ be the open subset of the branch locus such
 that its complement is all pairwise intersections of components of
 $B$, and let $R^\circ$ be the preimage of $B^\circ$ under $\mc{X}
 \rightarrow \mc{Y}$. Then we have $B^\circ = \sqcup_{e \mid n}
 B^\circ_e$, where $B^\circ_e \subset B^\circ$ is the union of
 irreducible components of ramification index $e$. Let $R^\circ_e$ be
 the preimage of $B^\circ_e$ under $\mc{X} \rightarrow \mc{Y}$.
 Since, for all $e \mid n$,
 $R^{\circ}_e \to B^{\circ}_e$ is a topologically trivial map of
 degree $e$ followed by an \'{e}tale map of degree $n/e$, Properties \ref{Peulerchar}~\ref{Etopinv} and
 \ref{ERHformula} show that
 \begin{equation}\label{Echiramdiv1}
  \chi((R^\circ_e)_s) = \frac{n}{e} \chi((B^\circ_e)_s), \end{equation} and
  \begin{equation}\label{Echiramdiv2} \chi((R^\circ_e)_{\overline{\eta}}) = \frac{n}{e} \chi((B^\circ_e)_{\overline{\eta}}).
 \end{equation}
Since $n \mid \deg(f)$, it follows that $B_{\overline{\eta}}$ is
precisely the zeroes of $f$ and using Property
\ref{Peulerchar}~\ref{Epoint} and \Cref{Afform} we have 
\begin{equation}\label{Echibranchgeneric0}  
\chi((B_e)_{\overline{\eta}}) = \sum_{i,\gcd(d_i,n) = n/e} \deg(f_i), \end{equation} and, 
\begin{equation}\label{Echibranchgeneric}  
\chi(B_{\overline{\eta}}) = \sum_{e \mid n} \sum_{i,\gcd(d_i,n) = n/e} \deg(f_i). \end{equation}
Furthermore, since $B^\circ_{\overline{\eta}} = B_{\overline{\eta}}$, it also follows that $R^\circ_{\overline{\eta}} = R_{\overline{\eta}}$, and combining Equations~\eqref{Echiramdiv2} and \eqref{Echibranchgeneric0} and adding over all $e \mid n$, we also get
\begin{equation}\label{Echiramdiv3}
 -\chi(R_{\overline{\eta}}) = \sum_{e \mid n} \frac{n}{e}\left(-\sum_{i,\gcd(d_i,n) = n/e} \deg(f_i) \right).
\end{equation}

Now  $(B_e)_s$ is the union of $N_e$ vertical components of
ramification index $e$, each isomorphic to $\P^1_k$, and closed
points, each isomorphic to $\Spec k$, one for each of the
specializations of the $f_i$ that satisfy $\gcd(d_i,n) = n/e$. Using
Property \ref{Peulerchar}~\ref{Epoint}, we see that the sum over all components
of $(B_e)_s$ of the Euler characteristic of the intersection of each
component of $(B_e)_s$ with components of $B_s$ is $2c_{ee}
+\sum_{\substack{e' \neq e}}  c_{ee'}$. Combining this with
Property \ref{Peulerchar}~\ref{Eadditivity} applied to this decomposition
and Properties \ref{Peulerchar}~\ref{Epoint} and
\ref{Peulerchar}~\ref{Echain}, we get
\begin{equation}\label{Echibranchspecial} \chi((B^\circ_e)_s) = 2N_e -2c_{ee} -\sum_{\substack{e' \neq e}}  c_{ee'} +\sum_{\substack{i,\gcd(d_i,n) = n/e}} 1.\end{equation}
Combining Equations~\eqref{Echiramdiv1} and \eqref{Echibranchspecial} and adding over all $e \mid n$, we also get
\begin{equation}\label{Echiramdivint}
 -n\chi((B^\circ)_s)+\chi((R^\circ)_s) = \sum_{e \mid n} \left(-n+ \frac{n}{e} \right) \left[ 2N_e -2c_{ee} -\sum_{\substack{e' \neq e}}  c_{ee'} +\sum_{\substack{i,\gcd(d_i,n) = n/e}} 1 \right].
\end{equation}
Now for every closed point of $B$ that lies on the intersection of a component of ramification indices $e$ and $e'$, by Proposition~\ref{prop: structure of points upstairs}(iii) we know that there are $\gcd(n/e,n/e')$ closed points above it in $R$. This implies that
\[ -n\chi(B_s \setminus B^\circ_s) = \sum_{\substack{e \leq e' \\ e \mid n, e' \mid n}} -nc_{ee'}, \text{ and, }\chi(R_s \setminus R^\circ_s) = \sum_{\substack{e \leq e' \\ e \mid n, e' \mid n}} c_{ee'} \gcd \left(\frac{n}{e},\frac{n}{e'} \right),\]
and therefore
\begin{equation}\label{Echibranchintersection}
 -n\chi(B_s \setminus B^\circ_s) + \chi(R_s \setminus R^\circ_s) = \sum_{\substack{e \leq e' \\ e \mid n, e' \mid n}} c_{ee'} \left( -n+\gcd \left(\frac{n}{e},\frac{n}{e'} \right) \right).
\end{equation}
Adding $n$ times Equations~\eqref{Echinumberofcomp} and \eqref{Echibranchgeneric} to Equations~\eqref{Echiramdiv3},~\eqref{Echiramdivint} and ~\eqref{Echibranchintersection}, applying Property \ref{Peulerchar}~\ref{Eadditivity} to the decompositions $B_s = B_s^\circ \sqcup (B_s \setminus B_s^{\circ}), R_s = R_s^\circ \sqcup (R_s \setminus R_s^{\circ})$
and rearranging the right hand side using the equality 
\[\sum_{e \mid n} \left(n - \frac{n}{e}\right) \left(2c_{ee} + \sum_{e' \neq e} c_{ee'} \right) + \sum_{e \leq e'} c_{ee'} \left( -n+\gcd \left(\frac{n}{e},\frac{n}{e'} \right) \right) \] \[= \sum_{\substack{e \leq e' \\ e \mid n, e' \mid n}} c_{ee'} \left[ n+\gcd \left(\frac{n}{e},\frac{n}{e'} \right)-\frac{n}{e}-\frac{n}{e'} \right], \]
we get that the right hand side of Equation~\eqref{Eeulerdiff1} equals the expression in (\ref{Etoprove}).\qedhere


\end{proof}

Let $\Y$ be as in Assumption \ref{Ahypoonmodel}. Let $\X\to\Y$ be the
 normalization of $\Y$ in $K(X)$. Let $z_1,...,z_l$ be the points of multiplicity $2$ in the branch divisor $B$ of $\X\to\Y$. For $1\leq i\leq l$, let $e_i,e_i'$ denote the ramification indices of the two irreducible components of $B$ meeting at $z_i$.
\begin{prop}\label{Partincalculation}
Let $\mc{X}'\to\X$ be the minimal regular resolution of $\X$. Then  
  \[ \Art^+(\mc{X}'/\mc{O}_K)-\Art^+(\mc{X}/\mc{O}_K) = \sum_{i=1}^l 
     \gcd \left(\frac{n}{e_i},\frac{n}{e_i'} \right)
     \cdot\HJL(z_i).\]
\end{prop}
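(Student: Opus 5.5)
Since $\Art^+ = \chi(\mc{X}_k) - \chi(\mc{X}_{\ol{K}}) + \delta(X/K)$ and the generic fiber and the Swan conductor are unchanged by the resolution $\mc{X}' \to \mc{X}$, the plan is to reduce everything to the special fibers:
\[\Art^+(\mc{X}'/\mc{O}_K)-\Art^+(\mc{X}/\mc{O}_K) = \chi(\mc{X}'_s)-\chi(\mc{X}_s).\]
We will then compute this difference locally at the singular points, using additivity (Property~\ref{Peulerchar}~\ref{Eadditivity}) and topological invariance (Property~\ref{Peulerchar}~\ref{Etopinv}).

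By Proposition~\ref{prop: structure of points upstairs}(i), the non-regular points of $\mc{X}$ lie over the intersection points $z_1,\dots,z_l$ of pairs of components of $B$; these are the multiplicity $2$ points of $B$. The minimal resolution is an isomorphism away from the finitely many singular points. Let $S\subset\mc{X}_s$ be the finite set of points lying over the $z_i$; we include the regular ones, since for those the exceptional locus is empty, consistent with $\HJL=0$. We decompose
\[\mc{X}_s = (\mc{X}_s\setminus S)\sqcup S, \qquad \mc{X}'_s = (\mc{X}'_s\setminus \pi^{-1}(S))\sqcup \pi^{-1}(S).\]
The first pieces are isomorphic, so the difference is $\sum_{x\in S}\bigl(\chi(\pi^{-1}(x)_{\red})-1\bigr)$. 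By Proposition~\ref{prop: structure of points upstairs}(iii), over each $z_i$ there are exactly $n/\lcm(e_i,e_i') = \gcd(n/e_i,n/e_i')$ points of $\mc{X}$. The last equality holds because, for divisors $e,e'$ of $n$, $n/\lcm(e,e')=\gcd(n/e,n/e')$. Each such point has complete local ring a tame cyclic quotient singularity of type $\frac{1}{\gcd(e_i,e_i')}(1,\kappa)$.

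By Proposition~\ref{prop: resolution of tame cyclic quotient singularities} and Remark~\ref{rmk: bound on HJ length of point}, the reduced exceptional fiber of the minimal resolution of $\mc{X}$ at such a point $x$ is a chain of $\HJL(z_i)$ copies of $\P^1_k$ meeting transversely. We will justify that minimal resolutions of the completed local ring and of $\mc{X}$ at $x$ agree, via the cited argument of \cite[Proposition 3.5]{ObusWewers} and excellence. By Property~\ref{Peulerchar}~\ref{Echain}, a chain (a tree) of $L$ projective lines has $\chi = L+1$, so each such point contributes $(\HJL(z_i)+1)-1=\HJL(z_i)$. When $\HJL(z_i)=0$ the point is regular and contributes $0$, which is consistent. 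Summing over the $\gcd(n/e_i,n/e_i')$ points above each $z_i$ gives the formula.

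The main obstacle is the justification that the minimal resolution of $\mc{X}$ is computed locally: the global minimal resolution restricts over each $x$ to the minimal resolution of $\Spec\hat{\O}_{\mc{X},x}$, with the same reduced exceptional configuration. This relies on excellence and on resolution commuting with completion, as in Remark~\ref{rmk: bound on HJ length of point}. A secondary point is that the minimal resolution alters nothing over regular points; this is true because minimal resolutions are isomorphisms over the regular locus.
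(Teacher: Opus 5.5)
Your proposal is correct and follows essentially the same route as the paper. You reduce via additivity to comparing the Euler characteristics of the finitely many special points of $\mc{X}_s$ with those of their preimages in $\mc{X}'_s$. You then use Proposition~\ref{prop: structure of points upstairs}(i),(iii) and Remark~\ref{rmk: bound on HJ length of point} to see that each of the $\gcd(n/e_i,n/e_i')$ points over $z_i$ contributes $(\HJL(z_i)+1)-1$. The only cosmetic difference is that your set $S$ also includes the regular points over the $z_i$, each contributing $0$, whereas the paper's set $T$ consists of the singular points only.
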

\begin{proof}
Let $T \subset \mc{X}_s$ be the set of singular points of $\mc{X}$ and
let $T' \subset \mc{X}'_s$ be the preimage of the singular points
under $\mc{X}' \rightarrow \mc{X}$. Since the map $\mc{X}' \rightarrow
\mc{X}$ induces isomorphisms $\mc{X}'_{\overline{\eta}} \rightarrow
\mc{X}_{\overline{\eta}}$ and $\mc{X}'_s \setminus T' \rightarrow
\mc{X}_s \setminus T$, from Property \ref{Peulerchar}~\ref{Eadditivity} applied to the decompositions $\mc{X}_s = T \sqcup (\mc{X}_s \setminus T)$ and $\mc{X}'_s = T' \sqcup (\mc{X}'_s \setminus T')$, it suffices to show that 
\[ \chi(T') - \chi(T) = \sum_{i=1}^l 
     \gcd \left(\frac{n}{e_i},\frac{n}{e_i'} \right)
     \cdot\HJL(z_i) \]

 We now describe $T$ and $T'$. By Proposition~\ref{prop: structure of points upstairs}(i,iii), the
 singularities of $\mc{X}$ are isolated, and lie above the points
 $z_i$. Furthermore by Remark \ref{rmk: bound on HJ length of point},
 if $x_i$ is a point above $z_i$, then the reduced exceptional divisor
 of the minimal resolution of $x_i$ is a
 chain of $\HJL(z_i)$ components isomorphic to $\P^1_k$, and by Proposition \ref{prop: structure of points upstairs}(iii), there
 are exactly $\gcd \left(\frac{n}{e_i},\frac{n}{e_i'} \right)$ such
 points $x_i$ for each $z_i$. The proposition now follows from
 Properties~\ref{Peulerchar}~\ref{Eadditivity} and \ref{Echain}.
\end{proof}

\section{Rephrasing the conductor-discriminant inequality}\label{Srephrasing}
The goal of this section is to prove \Cref{prop: cond-disc ineq cdc version}, where we use Proposition~\ref{PartinNormalcalculation} and
Proposition~\ref{Partincalculation} to
rephrase the conductor-discriminant inequality \eqref{Econddiscineq}
in terms of a quantity called the \emph{conductor
  discriminant contribution} (cdc) attached to the resolution of each
isolated singularity on a normal model of $X$ (see \Cref{dfn: cdc of sequence} and
Proposition~\ref{prop: cond-disc ineq cdc version}).  Similarly, in
Corollary~\ref{cor: exp-disc ineq cedc version}, we rephrase the
conductor exponent-discriminant inequality in terms of a quantity
called the \emph{conductor exponent discriminant contribution} (cedc).

\subsection{Conductor-discriminant contribution of a sequence of blowups}

For the rest of \S\ref{Srephrasing}, we drop
Assumption~\ref{Ahypoonmodel} but still assume $X$ is given by an equation $y^n = f(x)$ as in \Cref{Afform}.

\begin{notation}\label{notation: divisor notations}(Branch divisor)
    Let $\Y$ be a regular model of $\P^1_{K}$. Let $\X$ be the normalization of $\Y$ in $K(X)$. If $\Gamma$ is an irreducible divisor on $\Y$, the ramification index of $\Gamma$ always means the ramification index with respect to $\X \rightarrow \Y$; likewise, the branch divisor on $\Y$ always means the branch divisor of $\X \rightarrow \Y$.  
    
    Finally, for a divisor $E=\sum a_i\Gamma_i$ on $\Y$, let $(E\tx{ mod }n)^{\red}$ denote the divisor $\sum b_i\Gamma_i$ on $\Y$, where $b_i=0$ if $n\mid a_i$ and $b_i=1$ otherwise. For example, if $E=\divi_0(f)$ on $\Y$, then $(E\tx{ mod }n)^{\red}$ is the branch divisor of $\X\to\Y$.
\end{notation}

\begin{notation}\label{not:multiplicities}(Multiplicities and ramification indices)  Let $\Y$ be a regular model of $\P^1_K$, and let $F$ be an effective divisor on $\Y$. 
    Let $\Y_m\to\Y_{m-1}\to...\to\Y_0=\Y$ be a finite sequence of blowups at reduced closed points. For $0\leq k\leq m$, let $F_k$ be the total transform of $F$ under $\Y_k\to\Y_0$, and let $D_k=(F_k\tx{ mod }n)^{\red}$. For $1\leq k\leq m$, let $y_k\in\Y_{k-1}$ denote the center of the blowup $\Y_k\to\Y_{k-1}$, and let $\nu_k \colonequals \mult_{y_k} (F_{k-1})$ and 
    $\mu_k \colonequals \mult_{y_k} (D_{k-1})$. 
    Let $z_1,...,z_l$ be the multiplicity $2$ simple normal crossings points of the branch divisor on $\Y_m$ that are contained in $D_m$, and for $1\leq i\leq l$, let $e_i,e_i'$ be the ramification indices of the two irreducible components of the branch divisor meeting at $z_i$.
\end{notation}

\begin{remark}
    Keep Notation \ref{not:multiplicities}. If $F=\divi_0(f)$, so that $D_m$ is equal to the branch divisor on $\Y_m$, or if $F=F_y$ as in Notation \ref{NSection10}, so that Lemma \ref{lemma: compatible resolutions}(ii) applies, the points $z_1,...,z_l$ are simply the multiplicity $2$ simple normal crossings points of $D_m$. These are the only cases of interest to us.
\end{remark}

\begin{defn}\label{dfn: cdc of sequence}
    Let $\mc{Y}_i,z_i,\nu_k,\mu_k,e_i$ be as in \Cref{not:multiplicities}. The \textit{conductor-discriminant contribution} 
    associated to the sequence of blowups $\Y_m\to\Y_{m-1}\to...\to\Y_0$ and divisor $F$ is defined by
    \begin{align*}
        \cdc(\Y_m\to\Y_{m-1}\to...\to\Y_0,F)\colonequals&\sum_{ k:n\nmid\nu_k}(n-2\gcd(\nu_k,n)+(n-1)\mu_k(\mu_k-3))\\
        +&\sum_{k:n\mid\nu_k}(-n+(n-1)\mu_k(\mu_k-1))\\+&\sum_{i=1}^l\left(\left(\frac{n}{e_i}-1\right)+\left(\frac{n}{e_i'}-1\right)+\left(n-\gcd \left(\frac{n}{e_i},\frac{n}{e_i'} \right)(\HJL(z_i)+1)\right)\right).
    \end{align*}
    Similarly, the \textit{conductor exponent-discriminant contribution} 
    is defined by
    \begin{align*}
        \cedc(\Y_m\to\Y_{m-1}\to...\to\Y_0,F)\colonequals&\sum_{ k:n\nmid\nu_k}(1+n-2\gcd(\nu_k,n)+(n-1)\mu_k(\mu_k-3))\\
        +&\sum_{k:n\mid\nu_k}(1-n+(n-1)\mu_k(\mu_k-1))
        \\+&\sum_{i=1}^l\left(\left(\frac{n}{e_i}-1\right)+\left(\frac{n}{e_i'}-1\right)+\left(n-\gcd \left(\frac{n}{e_i},\frac{n}{e_i'} \right)\right)\right).
    \end{align*}

\end{defn}

This terminology is motivated by the following proposition.

\begin{prop}\label{prop: cond-disc ineq cdc version} Let $\Y=\P^1_{\O_K}$, and let $\Y_0,\Y_m,\X_m$ be as in \Cref{not:multiplicities}. Let $F \colonequals \divi_0(f) \subset \Y_0$. 
    Assume that the branch divisor of the normalization $\X_m\to\Y_m$
    of $\Y_m$ in $K(X)$ has simple normal crossings. If
    $\mathcal{X}_m' \to \mathcal{X}_m$ is the minimal regular
    resolution, then
    \begin{align*}
        (n-1)\nu_K(\disc_n(f))-\Art^+(\mathcal{X}_m')&=\cdc(\Y_m\to\Y_{m-1}\to...\to\Y_0, F)\\
        &+\sum_{i=1}^r(\gcd(n,d_i)-1)\Delta_{K_i/K}+\begin{cases}0&b=0\\2-2\gcd(b,n)&b\geq1 \end{cases}\\
        &\geq \cdc(\Y_m\to\Y_{m-1}\to...\to\Y_0, F)+\begin{cases}0&b=0\\2-2\gcd(b,n)&b\geq1. \end{cases}
    \end{align*}
\end{prop}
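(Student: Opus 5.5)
The plan is to compute both sides explicitly and compare. Proposition~\ref{Partincalculation} gives $\Art^+(\X_m')=\Art^+(\X_m)+\sum_i\gcd(n/e_i,n/e_i')\HJL(z_i)$. Proposition~\ref{PartinNormalcalculation} gives $\Art^+(\X_m)$ in terms of $N$, the $N_e$, the $c_{ee'}$ and the $\Delta_{K_i/K}$ for the model $\Y_m$. The inequality at the end is immediate, since $\gcd(n,d_i)\ge 1$ and $\Delta_{K_i/K}\ge0$. So everything reduces to proving the stated equality, which will be done by induction on the number $m$ of blowups.

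\textbf{Base data on $\Y_0$.} Write $F=\divi_0(f)$ on $\Y_0=\P^1_{\O_K}$. Its vertical part is $b$ times the special fiber, and its horizontal part is $\sum d_i\,\divi(f_i)$. By Remark~\ref{Rdiscprop},
\[
\nu_K(\disc_n(f))=\sum_i\Delta_{K_i/K}+\db_K(f/\pi_K^b)+\epsilon_b,
\]
where $\epsilon_b=0$ if $b=0$ and $\epsilon_b=2\deg\rad f-2$ if $b\ge1$. So $(n-1)\nu_K(\disc_n(f))-\sum_i(\gcd(n,d_i)-1)\Delta_{K_i/K}$ has $\Delta$-part $\sum_i(n-\gcd(n,d_i))\Delta_{K_i/K}$. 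This should cancel exactly against the $\Delta$-terms of Proposition~\ref{PartinNormalcalculation}, because $(n-n/e)=n-\gcd(n,d_i)$ when $\gcd(d_i,n)=n/e$. What remains to show is an identity purely among the combinatorial data $N$, $N_e$, $c_{ee'}$, the HJ lengths, the discriminant bonus $\db_K$, and the blowup multiplicities $\nu_k,\mu_k$.

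\textbf{Discriminant bonus under blowups.} The key auxiliary step is to express $\db_K(f/\pi_K^b)$ through the blowup sequence. Blowing up points on the special fiber of $\P^1_{\O_K}$ until the horizontal branch components become disjoint and transverse to the fiber should decrease the length $\len(\disc(H\to\O_K))$ in a controlled way: each blowup at $y_k$ removes $\mu_k^{h}(\mu_k^{h}-1)$, where $\mu_k^h$ is the horizontal multiplicity. This is the analogue of the classical formula $\nu(\disc)=\sum_P m_P(m_P-1)$ over infinitely near points, as in \cite{OShyper_published} for $n=2$. I would check it by treating each $\divi(f_i)$ separately, using Remark~\ref{Rcolength} and the standard Dedekind/Milnor-type formula, and separating the cross terms between distinct $f_i$ as intersection numbers. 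The vertical contribution then converts $\mu_k$ (the multiplicity of the reduced branch divisor, including the exceptional divisor when it is branched, i.e.\ when $n\nmid\nu_k$) into the $\mu_k(\mu_k-3)$ and $\mu_k(\mu_k-1)$ terms.

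\textbf{Tracking the combinatorial data.} Starting from $\Y_0$, record how each blowup at $y_k$ changes the quantities. $N$ increases by $1$. $N_e$ increases for $e=n/\gcd(\nu_k,n)$, which is the ramification index of the new exceptional curve. The intersection counts $c_{ee'}$ change by removing the old crossings at $y_k$ and adding crossings between the exceptional curve and the $\mu_k$ strict-transform branches (only if the exceptional curve is branched). Then I verify, one blowup at a time, that the increment of $(n-1)\nu_K(\disc)-\Art^+$ matches the $k$-th summand of $\cdc$. The case split is $n\nmid\nu_k$ versus $n\mid\nu_k$, the latter meaning the exceptional curve is unramified, which produces the $-n$ and the $\mu_k(\mu_k-1)$ term. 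The residual terms on $\Y_m$ (the sum over $z_i$) come from rewriting $\sum c_{ee'}[\,\cdots]$ plus the HJ correction. The constant $2-2\gcd(b,n)$ comes from the initial vertical fiber, whose ramification index is $n/\gcd(b,n)$, evaluated at $m=0$.

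The main obstacle will be the bookkeeping of the $c_{ee'}$ terms. Each crossing $z_i$ must split its bracket $n+\gcd-n/e-n/e'$ into the form $(n/e_i-1)+(n/e_i'-1)+(n-\gcd\cdot(\HJL+1))$, with the leftover $-\gcd$ and constant pieces absorbed into the per-blowup terms. I would first verify the case $m=0$ and a single blowup by hand, and then argue that the identity is additive.
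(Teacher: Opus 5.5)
Your outer layer is fine: you apply \Cref{PartinNormalcalculation} and \Cref{Partincalculation} on $\Y_m$, the $\Delta_{K_i/K}$ terms cancel because $n-n/e=n-\gcd(n,d_i)$, and the final inequality is trivial. But the step that carries the proof is missing, and the mechanism you propose would not work.

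\textbf{The induction on $m$ cannot be run.} Simple normal crossings is assumed only on $\Y_m$. So on $\Y_k$ with $k<m$ neither Artin-conductor formula applies, and there is no ``increment of $\Art^+$'' to compare with the $k$-th summand of $\cdc$. Likewise, $c_{ee'}$ and $\HJL$ are not meaningful at non-SNC points.

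\textbf{Your update rule for crossings fails at tangencies.} In \Cref{example: local n-resolution}, $F_1'$ and $E_1$ meet non-transversely at $y_2$, and $E_2$ is unbranched ($\nu_2=3$). Your rule therefore predicts no crossing on $\Y_2$, yet $F_2'$ and $E_1'$ still meet there. Counting crossing points is the wrong invariant.

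\textbf{The crux is left unspecified.} You need to turn the discriminant into the terms $(n-1)\mu_k(\mu_k-3)$ and $(n-1)\mu_k(\mu_k-1)$, where $\mu_k$ is the multiplicity of the \emph{full} reduced branch divisor $D_{k-1}$, special fiber and exceptional curves included. You leave this as ``the vertical contribution then converts\dots''. Your horizontal decrement $\mu_k^h(\mu_k^h-1)$ is correct, but it does not produce these terms unless you also track $H\cdot V$ and the nodes among vertical branch components through every blowup, which you do not do.

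\textbf{The matching at nodes is misdescribed.} Write $g_i=\gcd(n/e_i,n/e_i')$. At each $z_i$ the $\gcd$ terms already match exactly. The discrepancy between $-\bigl(n+g_i-\frac{n}{e_i}-\frac{n}{e_i'}+g_i\HJL(z_i)\bigr)$ and the $z_i$-term of $\cdc$ is precisely $2(n-1)$. This must be supplied by the discriminant, not absorbed into per-blowup terms.

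\textbf{The missing idea} is to track $\len_{\O_K}(\O_{\tilde D}/\O_D)$ for the whole reduced branch divisor $D$.
By \Cref{lemma: length change with blowup}, blowing up a point of multiplicity $\mu$ on $D$ lowers this length by:
\begin{itemize}
\item $\mu(\mu-1)/2$ if $n\mid\nu$, since then $D'$ is just the strict transform;
\item $\mu(\mu-3)/2$ if $n\nmid\nu$, since then $D'$ also contains $E$, which meets the strict transform with intersection number $\mu$.
\end{itemize}
On the SNC model $\Y_m$ this length equals the number $l$ of multiplicity-$2$ points (\Cref{Elengthexpression}). Moreover $\db_K(f)=2\len_{\O_K}(\O_{\tilde D_0}/\O_{D_0})-2c$, with $c=0$ if $b=0$ and $c=1$ otherwise (\Cref{Edbexpression}); this is where your $\epsilon_b$ and the special fiber are absorbed.

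With this, no induction is needed; one computes directly on $\Y_m$. There $N=m+1$, the original fiber has ramification index $n/\gcd(n,b)$, and the $k$-th exceptional curve has ramification index $n/\gcd(n,\nu_k)$. Hence
\[
n(N-1)-\sum_e 2N_e(n-n/e)=-2(n-\gcd(n,b))+\sum_k\bigl(2\gcd(\nu_k,n)-n\bigr).
\]
The term $(n-1)\db_K(f)$ then supplies three things: $2(n-1)l$ for the nodes, the $\mu_k$-terms of $\cdc$, and $-2(n-1)c$. The last combines with $2(n-\gcd(n,b))$ to give the constant $0$ or $2-2\gcd(b,n)$.
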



We first prove a few lemmas. Let $\Y$ be a regular model of $\P^1_K$. Let $D$ be the branch divisor of the normalization $\X\to\Y$ of $\Y$ in $K(X)$. Let $y\in\Y$ be a closed point such that $\mult_y D = \mu$ and $\mult_y (\divi_0(f)) = \nu$.
Let $\pi \colon \Y'\to\Y$ be the blowup of $\Y$ at $y$, let $E$ be the corresponding exceptional divisor with normalized discrete valuation $\ord_E \colon K(Y)^\times \rightarrow \Z$, and let $D'$ be the branch divisor of the normalization $\X'\to\Y'$ of $\Y'$ in $K(X)$. 
Let $\tilde{D}$ and $\tilde{D}'$ denote the normalizations of the divisors $D$ and $D'$ respectively. We continue to use $D$ to denote the strict transform in $\mc{Y}'$ of the divisor $D \subset\mc{Y}$.
\begin{lemma}\label{lemma: length change with blowup}
\hfill
\begin{enumerate}[\upshape (i)] 
\item\label{L:multexc} $\ord_E(f) = \nu$. If $n\mid\nu$, then $D'=D$, and if $n\nmid\nu$, then $D'=D+E$.
\item   \[\l{\tilde{D}}{D}=\l{\tilde{D'}}{D'}+\begin{cases}
        \frac{\mu(\mu-1)}{2}&n\mid\nu\\
        \frac{\mu(\mu-3)}{2}&n\nmid\nu.\end{cases}\]
\end{enumerate}
\end{lemma}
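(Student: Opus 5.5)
Part (i) is a local computation at $y$, and part (ii) follows from it by an adjunction argument. For (i), work in $\hat{\mc{O}}_{\mc{Y},y}$, a two-dimensional regular local ring. Write $f = u\prod_j g_j^{m_j}$ with $g_j$ the local equations of the irreducible components of $\divi_0(f)$ through $y$, so $\nu = \sum_j m_j\,\mult_y(g_j)$. Blowing up the maximal ideal, $\ord_E$ is the $\mathfrak{m}$-adic order valuation on a regular local ring, so $\ord_E(g_j) = \mult_y(g_j)$. Hence $\ord_E(f) = \nu$. The branch divisor on $\mc{Y}'$ is $(\divi_0(f)\text{ mod } n)^{\red}$ computed on $\mc{Y}'$. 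Away from $E$, the blowup is an isomorphism, so the non-exceptional part of $D'$ is the strict transform of $D$. The exceptional component $E$ appears exactly when $n\nmid \ord_E(f)=\nu$. This gives $D' = D$ or $D' = D+E$.

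For (ii), I will use the standard formula relating the $\delta$-invariant of a curve on a regular surface to the $\delta$-invariant of its strict transform. Applied to the reduced divisor $D$ with $\mult_y D = \mu$, it reads
\[\len(\mc{O}_{\tilde D}/\mc{O}_D) = \len(\mc{O}_{\tilde D}/\mc{O}_{D_{\mathrm{str}}}) + \tfrac{\mu(\mu-1)}{2}.\]
I would prove this via adjunction, following \cite[Chapter~9]{Liubook}. The arithmetic genus of a reduced curve $C$ on a regular arithmetic surface satisfies $p_a(C) = p_a(\tilde C) + \len(\mc{O}_{\tilde C}/\mc{O}_C)$, where the genus of a non-connected normalization is read via Euler characteristics. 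Adjunction gives $2p_a(C)-2 = C\cdot(C+K)$. On the blowup, the strict transform is $\pi^*D - \mu E$ and $K' = \pi^*K + E$. Using $E^2=-1$ and $\pi^*D\cdot E = 0$, we get $2p_a(D_{\mathrm{str}}) - 2 = 2p_a(D)-2 - \mu(\mu-1)$. Since $D$ and its strict transform have the same normalization, the first case $n\mid\nu$ follows.

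A subtlety is that $D$ may have horizontal components, so it is not proper over $k$. I would handle this by working locally: the length is a sum of local contributions at singular points, and only the point $y$ changes. The cleanest route is to compute $\delta_y(D)$ via the local intersection-theoretic formula $2\delta = \mu(\mu-1) + \sum_{\text{infinitely near}}\dots$. Alternatively, I could complete $D$ by adding enough general vertical or horizontal curves and then use the global adjunction.

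For $n\nmid\nu$, $D' = D_{\mathrm{str}} + E$. Here $E\cong \proj^1_k$ is regular, so its normalization is itself, and $\tilde D' = \tilde D \sqcup E$. For reduced curves $C_1,C_2$ without common components, $\delta(C_1+C_2) = \delta(C_1)+\delta(C_2) + C_1\cdot C_2$. Since $D_{\mathrm{str}}\cdot E = \mu$, we get
\[\len(\mc{O}_{\tilde D'}/\mc{O}_{D'}) = \len(\mc{O}_{\tilde D}/\mc{O}_{D_{\mathrm{str}}}) + \mu.\]
Combining this with the $n\mid\nu$ case gives $\tfrac{\mu(\mu-1)}{2} - \mu = \tfrac{\mu(\mu-3)}{2}$.

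The main obstacle is justifying the $\delta$-invariant formulas (the blowup formula and the additivity with intersection number) in the mixed-characteristic, non-proper setting with horizontal components. I would first try to reduce both formulas to the purely local statement in $\hat{\mc{O}}_{\mc{Y},y}$. There I would use $\delta(C) = \len(\mc{O}_{\tilde C}/\mc{O}_C)$ together with the conductor/colength computation for a single blowup of a regular two-dimensional local ring, which holds for excellent rings.
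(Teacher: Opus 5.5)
Your proof is correct. It takes the paper's route for (i), and for (ii) it proves what the paper only cites.

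\textbf{Part (i).} This is the paper's argument. The paper cites \cite[Proposition~9.2.23]{Liubook} for $\ord_E(f)=\nu$. That is your observation that $\ord_E$ is the $\mathfrak{m}_y$-adic order function of the regular local ring at $y$. The paper then reads off $D'$ from $D'=(\divi_0(f)\bmod n)^{\red}$, exactly as you do.

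\textbf{Part (ii).} The paper gives no argument of its own here. It quotes \cite[Equations (3.6) and (3.7)]{OShyper_published} and notes that only the final inequalities there use $n=2$. You instead rederive the two facts behind those equations:
\begin{itemize}
\item the single-blowup formula $\delta_y(D)=\sum_{y'\mapsto y}\delta_{y'}(D_{\mathrm{str}})+\mu(\mu-1)/2$;
\item the gluing formula $\delta(C_1+C_2)=\delta(C_1)+\delta(C_2)+C_1\cdot C_2$, applied with $D_{\mathrm{str}}\cdot E=\mu$.
\end{itemize}
Your bookkeeping $\mu(\mu-1)/2-\mu=\mu(\mu-3)/2$ is right. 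Your version is self-contained and shows that $n$ enters only through whether $E\subseteq D'$. The paper's citation is shorter, but it leaves the reader to check that nothing else in \cite{OShyper_published} depends on $n=2$.

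\textbf{Replace the adjunction step.} The global adjunction computation cannot be the actual proof. $D$ typically has horizontal components, so $D\cdot D$ and $p_a(D)$ over $k$ are undefined. Adding further curves does not make $D$ proper over $k$.

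The local route you mention at the end should be the main argument, and it is short:
\begin{enumerate}
\item The ideal sheaf of $D_{\mathrm{str}}$ is $\pi^*\mathcal{O}_{\mathcal{Y}}(-D)\otimes\mathcal{O}_{\mathcal{Y}'}(\mu E)$.
\item Use $\pi_*\mathcal{O}_{\mathcal{Y}'}(\mu E)=\mathcal{O}_{\mathcal{Y}}$ and $R^1\pi_*\mathcal{O}_{\mathcal{Y}'}=0$. Pushing forward $0\to\mathcal{O}_{\mathcal{Y}'}(\mu E-\pi^*D)\to\mathcal{O}_{\mathcal{Y}'}\to\mathcal{O}_{D_{\mathrm{str}}}\to 0$ gives $\pi_*\mathcal{O}_{D_{\mathrm{str}}}/\mathcal{O}_D\cong R^1\pi_*\mathcal{O}_{\mathcal{Y}'}(\mu E)\otimes\mathcal{O}_{\mathcal{Y}}(-D)$.
\item Using $\mathcal{O}_E(jE)\cong\mathcal{O}_{\mathbb{P}^1_k}(-j)$ and inducting on $j$, the length of this quotient is $\sum_{j=1}^{\mu}h^1(\mathbb{P}^1_k,\mathcal{O}(-j))=\mu(\mu-1)/2$.
\end{enumerate}
This works on any regular two-dimensional scheme, horizontal components included. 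It uses $\kappa(y)=k$ (available since $k$ is algebraically closed) in the same way your $E^2=-1$ does.

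The gluing formula comes from the exact sequence $0\to A/(g_1g_2)\to A/(g_1)\times A/(g_2)\to A/(g_1,g_2)\to 0$ for coprime $g_1,g_2$ in the local ring $A$. The number $D_{\mathrm{str}}\cdot E=(\pi^*D-\mu E)\cdot E=\mu$ is meaningful because $E$ is vertical.
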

\begin{proof} By definition of $X, D,$ and $D'$, it follows that $D \equiv (\divi_0(f)\tx{ mod }n)^{\red}\subset \Y$ and $D' \equiv (\divi_0(f)\tx{ mod }n)^{\red}\subset\Y'$. By \cite[Proposition~9.2.23]{Liubook}, it follows that $\ord_E(f) = \mult_y (\divi_0(f)) \equalscolon \nu$.
From the last two sentences, it follows that if $n\mid\nu$, then $D'$ is the strict transform of $D$, and if $n\nmid\nu$, then $D'=D+E$, where we continue to use $D$ to denote the strict transform in $\mc{Y}'$ of the divisor $D \subset \mc{Y}$. This proves \ref{L:multexc}. Part (ii) of the lemma then follows from \cite[Equations (3.6) and (3.7)]{OShyper_published}, where we ignore the final inequalities in both equations, which are the only parts of these equations that depend on the assumption $n=2$ in \cite{OShyper_published}.   
\end{proof}

\begin{corollary}\label{Elengthexpression}
 Let $D_0,D_m$ be as in \Cref{not:multiplicities}. Assume that $D_m$ has simple normal crossings and that $l$ is the number of multiplicity $2$ points of $D_m$. Then
 \[ \l{\tilde{D}_0}{D_0}=l+\sum_{k:n\nmid\nu_k}\frac{\mu_k(\mu_k-3)}{2}+\sum_{k:n\mid\nu_k}\frac{\mu_k(\mu_k-1)}{2}.\]
\end{corollary}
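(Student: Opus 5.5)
The plan is to telescope Lemma~\ref{lemma: length change with blowup}(ii) along the sequence of blowups $\Y_m\to\Y_{m-1}\to\cdots\to\Y_0$, and then to compute the final term $\l{\tilde{D}_m}{D_m}$ directly from the simple normal crossings hypothesis.

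First, for each $1\leq k\leq m$, apply Lemma~\ref{lemma: length change with blowup} to the single blowup $\Y_k\to\Y_{k-1}$ at $y_k$, taking $\divi_0(f)$ to be the total transform $F_{k-1}$. The lemma only uses that $D=(\divi_0(f)\tx{ mod }n)^{\red}$, together with the multiplicities $\mu=\mult_{y_k}D_{k-1}=\mu_k$ and $\nu=\mult_{y_k}F_{k-1}=\nu_k$. Its proof relies only on \cite[Proposition~9.2.23]{Liubook} and the formulas from \cite{OShyper_published}, so it applies equally to the divisor $F_{k-1}$ on the regular surface $\Y_{k-1}$ with $D_{k-1}=(F_{k-1}\tx{ mod }n)^{\red}$. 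Part (i) identifies $D_k$ as the strict transform of $D_{k-1}$, plus the exceptional curve exactly when $n\nmid\nu_k$. Part (ii) then gives
\[\l{\tilde{D}_{k-1}}{D_{k-1}}-\l{\tilde{D}_k}{D_k}=\frac{\mu_k(\mu_k-3)}{2}\text{ or }\frac{\mu_k(\mu_k-1)}{2},\]
according to whether $n\nmid\nu_k$ or $n\mid\nu_k$.

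Summing over $k$ telescopes to $\l{\tilde{D}_0}{D_0}=\l{\tilde{D}_m}{D_m}+\sum_k(\cdots)$. It then remains to show $\l{\tilde{D}_m}{D_m}=l$. Since $D_m$ has simple normal crossings, its components are regular, so the normalization $\tilde{D}_m$ is their disjoint union. The quotient $\O_{\tilde{D}_m}/\O_{D_m}$ is therefore supported at the points where two components meet, and these are exactly the $l$ multiplicity $2$ points. At such a point $z$ the local ring is $\O_{\Y_m,z}/(s_1s_2)$, where $s_1,s_2$ form a regular system of parameters because the intersection is transverse. This ring embeds into $\O_{\Y_m,z}/(s_1)\times\O_{\Y_m,z}/(s_2)$ with cokernel $\O_{\Y_m,z}/(s_1,s_2)=k$, so each such point contributes length $1$ over $\O_K$, using that $k$ is algebraically closed. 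This yields the formula.

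The only step needing care is checking that Lemma~\ref{lemma: length change with blowup} applies verbatim with an arbitrary effective divisor $F_{k-1}$ in place of $\divi_0(f)$, and that the strict transform of $D_{k-1}$ is what the lemma calls $D$. Both are immediate from its proof, since it only uses the identity $D=(\cdot\tx{ mod }n)^{\red}$ and $\ord_E=\mult_y$.
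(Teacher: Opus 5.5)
Your proposal is correct and is essentially the paper's proof. You telescope Lemma~\ref{lemma: length change with blowup}(ii) over the blowups $\Y_k\to\Y_{k-1}$, and you use that $\l{\tilde{D}_m}{D_m}=l$ when $D_m$ has simple normal crossings. Beyond the paper's argument, you spell out the local length-one computation at each transverse crossing and note that the lemma applies with an arbitrary effective divisor $F_{k-1}$ in place of $\divi_0(f)$; the paper leaves both of these implicit.
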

\begin{proof}
Since $D_m$ has simple normal crossings and $l$ is the number of multiplicity $2$ points of $D_m$, it follows that $\l{\tilde{D}_m}{D_m} = l$. Combining this with a repeated application of Lemma \ref{lemma: length change with blowup}, we get
\begin{equation*}
    \begin{aligned}
        \l{\tilde{D}_0}{D_0}&=\l{\tilde{D}_m}{D_m}+\sum_{k:n\nmid\nu_k}\frac{\mu_k(\mu_k-3)}{2}+\sum_{k:n\mid\nu_k}\frac{\mu_k(\mu_k-1)}{2}\\
        &=l+\sum_{k:n\nmid\nu_k}\frac{\mu_k(\mu_k-3)}{2}+\sum_{k:n\mid\nu_k}\frac{\mu_k(\mu_k-1)}{2}.
      \end{aligned} 
      \end{equation*}
\end{proof}

\begin{lemma}\label{Edbexpression}
Suppose $\Y=\P^1_{\O_K}$, and let $D_0$ be as in \Cref{not:multiplicities}. Then
 \[ \db_K(f) =2\l{\tilde{D}_0}{D_0}-\begin{cases}
     0&b=0\\
     2&b\geq 1
 \end{cases}\qedhere\] 
\end{lemma}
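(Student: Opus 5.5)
The plan is to compute both sides directly in terms of the components of $D_0$, using two standard local facts. The first is the additivity of the $\delta$-invariant: for reduced effective divisors $C_1, C_2$ without common components on a regular arithmetic surface,
\[\len(\mc{O}_{\widetilde{C_1+C_2}}/\mc{O}_{C_1+C_2}) = \len(\mc{O}_{\tilde{C}_1}/\mc{O}_{C_1}) + \len(\mc{O}_{\tilde{C}_2}/\mc{O}_{C_2}) + C_1\cdot C_2.\]
The second is the relation between discriminant and index for a monic irreducible $f_i$ with root $\alpha_i$:
\[\nu_K(\disc(f_i)) = \Delta_{K_i/K} + 2\,\len_{\mc{O}_K}(\mc{O}_{K_i}/\mc{O}_K[\alpha_i]).\]
This is the content of Remark~\ref{Rcolength}, and it follows from $\disc(\mc{O}_K[\alpha_i]) = [\mc{O}_{K_i}:\mc{O}_K[\alpha_i]]^2\disc(\mc{O}_{K_i})$. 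The proof then splits into the cases $b=0$ and $b\geq 1$.

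\textbf{Case $b=0$.} Under Assumption~\ref{Afform}, $n\mid\deg f$ and the $f_i$ are monic, so $D_0 = H$ is the union of the horizontal curves $H_i = V(f_i) \cong \Spec \mc{O}_K[x]/(f_i)$. These lie in the affine chart $\Spec\mc{O}_K[x]$ and do not meet $\infty$. Each $H_i$ is integral with normalization $\Spec\mc{O}_{K_i}$, so its $\delta$-invariant is $\len(\mc{O}_{K_i}/\mc{O}_K[\alpha_i])$. Moreover $H_i\cdot H_j = \len \mc{O}_K[x]/(f_i,f_j) = \nu_K(\operatorname{Res}(f_i,f_j))$ for monic $f_i,f_j$. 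Iterating additivity gives
\[\len(\mc{O}_{\tilde{D}_0}/\mc{O}_{D_0}) = \sum_i \len(\mc{O}_{K_i}/\mc{O}_K[\alpha_i]) + \sum_{i<j}\nu_K(\operatorname{Res}(f_i,f_j)).\]
On the other side, $\disc(\rad f) = \prod_i\disc(f_i)\prod_{i<j}\operatorname{Res}(f_i,f_j)^2$. Together with Remark~\ref{Rdiscprop} ($\nu_K(\disc_n f)=\nu_K(\disc(\rad f))$) and the index relation, this yields
\[\db_K(f) = 2\sum_i\len(\mc{O}_{K_i}/\mc{O}_K[\alpha_i]) + 2\sum_{i<j}\nu_K(\operatorname{Res}(f_i,f_j)) = 2\,\len(\mc{O}_{\tilde D_0}/\mc{O}_{D_0}).\]

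\textbf{Case $b\geq 1$.} Here $n\nmid b$, so $D_0 = H + V$ with $V = \proj^1_k$ the reduced special fiber and $H$ the horizontal part of $f/\pi_K^b$. Since $V$ is regular, its $\delta$-invariant is zero. Additivity then gives
\[\len(\mc{O}_{\tilde D_0}/\mc{O}_{D_0}) = \len(\mc{O}_{\tilde H}/\mc{O}_H) + H\cdot V.\]
Because $H$ is monic of degree $\deg\rad(f/\pi_K^b) = \deg\rad(f)$, we have $H\cdot V = \deg \rad(f)$. By the previous case applied to $f/\pi_K^b$, we get $\db_K(f/\pi_K^b) = 2\,\len(\mc{O}_{\tilde H}/\mc{O}_H)$. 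Remark~\ref{Rdiscprop} then gives
\[\db_K(f) = 2\,\len(\mc{O}_{\tilde H}/\mc{O}_H) + 2\deg\rad(f) - 2 = 2\,\len(\mc{O}_{\tilde D_0}/\mc{O}_{D_0}) - 2,\]
as claimed.

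\textbf{Expected obstacle.} The main point needing care is justifying the two local facts over $\mc{O}_K$. For additivity, the exact sequence $0\to\mc{O}_{C_1+C_2}\to\mc{O}_{C_1}\oplus\mc{O}_{C_2}\to\mc{O}_{C_1\cap C_2}\to 0$ identifies the intersection number with the length of the cokernel, and comparing with normalizations gives the formula. For the index relation, one also needs that the $\mc{O}_K$-length used here agrees with $\nu_K$ of the index ideal, which holds since $k$ is algebraically closed.
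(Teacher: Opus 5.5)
Your proof is correct and follows the same route as the paper. In both, Remark~\ref{Rdiscprop} reduces to the horizontal part $f/\pi_K^b$, one uses $\db_K(f/\pi_K^b)=2\,\len_{\O_K}(\O_{\tilde H}/\O_H)$, and the regular vertical fiber $V$ adds $H\cdot V=\deg\rad(f)$ to the $\delta$-invariant. The only difference is that the paper cites these two facts from \cite{OShyper_published} (equation (3.9) and the equation preceding (3.10)), whereas you derive them directly from additivity of the $\delta$-invariant and the index--discriminant relation. As a minor point, $\O_K$-length equals $\nu_K$ of the index ideal over any discrete valuation ring, so $k$ algebraically closed is not needed there.
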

\begin{proof}
For convenience, set $c=0$ if $b=0$ and $c=1$ otherwise. Let $D_0'$ be the
 horizontal part of $D_0$, and let $\widetilde{D_0'}$ denote the normalization of $D_0'$. In \eqref{Edbexpresscorrect} below, the first equality follows from
    \Cref{Rdiscprop}, the second from the equation between (3.9) and
    (3.10) in \cite{OShyper_published}, and the third from
    \cite[(3.9)]{OShyper_published}.

    \begin{equation}\label{Edbexpresscorrect}
    \begin{aligned}
        \db_K(f)&=2c(\deg(\rad(f))-1)+\db_K(f/\pi^b) & \\
        &=2c(\deg(\rad(f))-1)+2\tx{ }\l{\widetilde{D_0'}}{D_0'} & \\
        &=2c(\deg(\rad(f))-1)+2\tx{ }\l{\widetilde{D}_0}{D_0}-2c\deg(\rad(f)) & \\
        &=2\tx{ }\l{\tilde{D}_0}{D_0}-2c.\quad \quad \quad \quad \quad \quad &\qedhere
      \end{aligned}
      \end{equation}
      
\end{proof}


\begin{proof}[Proof of Proposition \ref{prop: cond-disc ineq cdc version}]
Keep the notation of \Cref{dfn: cdc of sequence}.  For $e\mid n$, let $N_e$ be the number of irreducible components of the special fiber $(\Y_m)_s$ of ramification index $e$, and let $N \colonequals \sum_{e\mid n}N_e$.
By Propositions \ref{PartinNormalcalculation}, \ref{Partincalculation} and \ref{prop: structure of points upstairs}(iii), we have
\begin{equation}\label{eq:artplus}
    \begin{aligned}
        \Art^+(\X'_m&)
        =n(N-1)-\sum_{e\mid n} \left(n-\frac{n}{e} \right) \left(2N_e-\sum_{i:\gcd(d_i,n)=\frac{n}{e}}\Delta_{K_i/K} \right)\\
        &\quad\quad+\sum_{i=1}
        ^l \left(n+\gcd \left(\frac{n}{e_i},\frac{n}{e_i'} \right)-\frac{n}{e_i}-\frac{n}{e_i'} \right)+\sum_{i=1}^l\gcd \left(\frac{n}{e_i},\frac{n}{e_i'} \right)\HJL(z_i). 
    \end{aligned}
    \end{equation}

    The special fiber $(\mc{Y}_0)_s=\P^1_k$ is irreducible and has
    ramification index $\frac{n}{\gcd(n,b)}$, where $b$ is the
    valuation of the leading coefficient of $f$ as in Assumption~\ref{Afform}. By \Cref{lemma: length change with blowup}~\ref{L:multexc}, the blowup $\Y_k\to\Y_{k-1}$, centered at a closed point in $\Y_{k-1}$ of multiplicity $\nu_k$ in $\divi_0(f)$, introduces a single irreducible component in the special fiber on which the order of $f$ is $\nu_k$, which implies that the ramification index of this component is $\frac{n}{\gcd(n,\nu_k)}$. It follows that $N=m+1$,
    \begin{align*}
        \sum_{e\mid n}2N_e \left(n-\frac{n}{e} \right)&=2(n-\gcd(n,b))+\sum_{k=1}^m2(n-\gcd(\nu_k,n)),
    \end{align*}
    and therefore
    \begin{equation}\label{eq:compcount}
    \begin{aligned}
        n(N-1)-\sum_{e\mid n}2N_e \left( n-\frac{n}{e} \right)&=-2(n-\gcd(n,b))+ \sum_{k=1}^{m}(2\gcd(\nu_k,n)-n).
    \end{aligned}
    \end{equation}

    We also have
    \begin{equation}\label{eq:disc}
    \begin{aligned}
        \sum_{e\mid n} \left(n-\frac{n}{e} \right) \left(\sum_{i:\gcd(d_i,n)=\frac{n}{e}}\Delta_{K_i/K} \right)&=\sum_{i=1}^r(n-\gcd(n,d_i))\Delta_{K_i/K}
        \\&=(n-1)\sum_{i=1}^r\Delta_{K_i/K}-\sum_{i=1}^r(\gcd(n,d_i)-1)\Delta_{K_i/K}.
    \end{aligned}
    \end{equation}
    
    Putting \eqref{eq:artplus}, \eqref{eq:compcount}, and \eqref{eq:disc} together, we get

   \begin{equation}\label{EArtexpression}
    \begin{aligned}
        \Art^+(\X'_m) &=-2(n-\gcd(n,b))+\sum_{k=1}^m(2\gcd(\nu_k,n)-n)\\&+(n-1)\sum_{i=1}^r\Delta_{K_i/K}-\sum_{i=1}^r(\gcd(d_i,n)-1)\Delta_{K_i/K}
        \\&+\sum_{i=1}^l\left(n-\frac{n}{e_i}-\frac{n}{e_i'}+\gcd \left(\frac{n}{e_i},\frac{n}{e_i'} \right)(\HJL(z_i)+1)\right).
      \end{aligned}
      \end{equation}

   Using \eqref{EArtexpression} and \Cref{Ddiscbonus} for the first
   equality, \Cref{Elengthexpression} and
   \Cref{Edbexpression} for the second equality, and \Cref{dfn: cdc of sequence} for the final equality, we get that
   \begin{align*}
   \quad &(n-1)\nu_K(\disc_n(f))-\Art^+(\mathcal{X}_m') \\        
        \quad= &\ (n-1)\db_K(f)+2(n-\gcd(n,b))-\sum_{k=1}^m(2\gcd(n,\nu_k)-n)+\sum_{i=1}^r(\gcd(n,d_i)-1)\Delta_{K_i/K}\\
        \quad &-\sum_{i=1}^l\left(n-\frac{n}{e_i}-\frac{n}{e_i'}+\gcd \left(\frac{n}{e_i},\frac{n}{e_i'} \right)(\HJL(z_i)+1)\right)\\
        =  &\ 2l(n-1)+\sum_{k:n\nmid\nu_k}(n-1)\mu_k(\mu_k-3)+\sum_{k:n\mid\nu_k}(n-1)\mu_k(\mu_k-1)-\begin{cases}
            0&b=0\\
            2(n-1)&b\geq 1
        \end{cases}\\
        \quad\quad & +2(n-\gcd(n,b))-\sum_{k=1}^m(2\gcd(n,\nu_k)-n)+\sum_{i=1}^r(\gcd(n,d_i)-1)\Delta_{K_i/K}\\
        \quad\quad & -\sum_{i=1}^l\left(n-\frac{n}{e_i}-\frac{n}{e_i'}+\gcd \left(\frac{n}{e_i},\frac{n}{e_i'} \right)(\HJL(z_i)+1)\right)\\
        = &\sum_{k:n\nmid\nu_k}\left(n-2\gcd(\nu_k,n)+(n-1)\mu_k(\mu_k-3)\right)+\sum_{k:n\mid\nu_k}\left(-n+(n-1)\mu_k(\mu_k-1)\right)\\
        \quad\quad & +\sum_{i=1}^l\left(\frac{n}{e_i}-1 \right)+\left(\frac{n}{e_i'}-1 \right)+ \left(n-\gcd \left(\frac{n}{e_i},\frac{n}{e_i'} \right)(\HJL(z_i)+1) \right)\\
        \quad\quad & +\sum_{i=1}^r(\gcd(n,d_i)-1)\Delta_{K_i/K}+2(n-\gcd(b,n))-\begin{cases}
            0&b=0\\
            2(n-1)&b\geq 1
        \end{cases}\\
        \quad\quad= &\ \cdc(\Y_m\to...\Y_0,F)+\sum_{i=1}^r(\gcd(n,d_i)-1)\Delta_{K_i/K}+\begin{cases}
            0&b=0\\
            2-2\gcd(b,n)&b\geq1
        \end{cases} \\
         \quad\quad\geq &\ \cdc(\Y_m\to...\Y_0,F)+\begin{cases}
            0&b=0\\
            2-2\gcd(b,n)&b\geq1,
        \end{cases}
    \end{align*}
    where we use that the terms $(\gcd(n,d_i)-1)\Delta_{K_i/K}$ are nonnegative for the last inequality.
\end{proof}

Now let $\phi(X/K)$ denote the conductor exponent for the Galois
representation $\Gal (\overline{K}/K) \rightarrow $ $\Aut_{\Q_\ell}
(H^1_{\mathrm{et}}(X_{\overline{K}}, \Q_\ell))$ ($\ell \neq \cha k$),
which depends only on the generic fiber $X$ and not on the model $\mc{X}$. We can adapt Proposition \ref{prop:
  cond-disc ineq cdc version} to give a lower bound on
$(n-1)\nu_K(\disc_n(f))-\phi(X/K)$.  Recall from
Proposition~\ref{Partinconductor} that if $\mc{X}/\mc{O}_K$ is any regular model of $X$ and $N$ is the number of irreducible components of the special fiber $\mc{X}_{s}$, then
\begin{equation}\label{Eliu}
  \Art^+(\mc{X}/\mc{O}_K) = N - 1 + \phi(X/K).
\end{equation}

\begin{corollary}\label{cor: exp-disc ineq cedc version}
    Keep the notation of Proposition \ref{prop: cond-disc ineq cdc version}. Then
    \[(n-1)\nu_K(\disc_n(f))-\phi(X/K) \geq \cedc(\Y_m\to\Y_{m-1}\to...\to\Y_0,F)+\begin{cases}
            0&b=0\\
            2-2\gcd(b,n)&b\geq1.
        \end{cases}\] 
\end{corollary}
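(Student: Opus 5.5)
The plan is to deduce the corollary directly from Proposition~\ref{prop: cond-disc ineq cdc version} using Liu's formula \eqref{Eliu} applied to the regular model $\mc{X}_m'$. By \eqref{Eliu}, $\phi(X/K) = \Art^+(\mc{X}_m') - (N'-1)$, where $N'$ is the number of irreducible components of the special fiber of $\mc{X}_m'$. Hence
\[(n-1)\nu_K(\disc_n(f)) - \phi(X/K) = \bigl((n-1)\nu_K(\disc_n(f)) - \Art^+(\mc{X}_m')\bigr) + (N'-1),\]
and the first bracket is bounded below by $\cdc(\Y_m\to\cdots\to\Y_0,F)$ plus the $b$-correction term, by Proposition~\ref{prop: cond-disc ineq cdc version}. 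So it suffices to show that
\[N' - 1 \geq \cedc(\Y_m\to\cdots\to\Y_0,F) - \cdc(\Y_m\to\cdots\to\Y_0,F).\]

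Next, compute the right-hand side from Definition~\ref{dfn: cdc of sequence}. Each blowup index $k$ contributes $+1$ more to $\cedc$ than to $\cdc$, giving $m$ in total. Each point $z_i$ contributes
\[\Bigl(n-\gcd\bigl(\tfrac{n}{e_i},\tfrac{n}{e_i'}\bigr)\Bigr) - \Bigl(n-\gcd\bigl(\tfrac{n}{e_i},\tfrac{n}{e_i'}\bigr)(\HJL(z_i)+1)\Bigr) = \gcd\bigl(\tfrac{n}{e_i},\tfrac{n}{e_i'}\bigr)\HJL(z_i).\]
Thus the difference is $m + \sum_i \gcd(n/e_i,n/e_i')\HJL(z_i)$.

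It then remains to count components of $\mc{X}'_m$ and show $N' \geq m+1+\sum_i \gcd(n/e_i,n/e_i')\HJL(z_i)$. The special fiber of $\Y_m$ has $m+1$ irreducible components, namely $(\Y_0)_s$ together with one exceptional curve per blowup, as already noted in the proof of Proposition~\ref{prop: cond-disc ineq cdc version}. Each of these has at least one irreducible component of $\mc{X}_m$ above it, and these components are distinct because they map to distinct components. The minimal resolution $\mc{X}_m'\to\mc{X}_m$ then adds, above each $z_i$, the $\gcd(n/e_i,n/e_i')$ singular points described in Proposition~\ref{prop: structure of points upstairs}(iii), each contributing a chain of $\HJL(z_i)$ new exceptional components by Remark~\ref{rmk: bound on HJ length of point}. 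These exceptional components are new and distinct from the strict transforms, which gives the inequality.

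The only point needing care is the use of Liu's formula. It counts geometric components, but since $k$ is algebraically closed this causes no issue. Apart from that the argument is bookkeeping, and I expect no real obstacle; the likely slip is a sign error in the $\HJL$ term of the difference $\cedc-\cdc$.
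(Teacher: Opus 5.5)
Your proposal is correct and is essentially the paper's proof. The paper also combines Proposition~\ref{prop: cond-disc ineq cdc version} with \eqref{Eliu} for $\mathcal{X}_m'$. It then bounds $N_m' \geq 1+m+\sum_i \gcd(n/e_i,n/e_i')\,\HJL(z_i)$ by counting components over the $m+1$ components of $(\mathcal{Y}_m)_s$ together with the Hirzebruch--Jung chains, and this bound is exactly the difference $\cedc-\cdc$ that you computed.
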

\begin{proof}
    Let $N_m'$ denote the number of irreducible components of the
    special fiber $(\X_m')_s$. By Proposition \ref{prop: cond-disc
      ineq cdc version} and \eqref{Eliu} applied
    to the regular model $\X_m'$ of $X$, it follows that 
    \begin{equation}\label{Ecdcexpression}
    \begin{aligned}
    &\quad\  (n-1)\nu_K(\disc_n(f))-\phi(X/K) \\
        &= N_m'-1+ (n-1)\nu_K(\disc_n(f))-\Art^+(\mathcal{X}_m')\\
        &\geq N_m'-1+\cdc(\Y_m\to\Y_{m-1}\to...\to\Y_0, F)+\begin{cases}
            0&b=0\\
            2-2\gcd(b,n)&b\geq1.
        \end{cases}\\
    \end{aligned}
    \end{equation}
    
   We now estimate $N_m'$. The strict transforms on $\Y_m$ of the
   special fiber $(\Y_0)_s=\P^1_k$ and of the exceptional divisor of
   each blowup $\Y_k\to\Y_{k-1}$ are distinct irreducible components
   of $(\Y_m)_s$. In turn, each irreducible component of $(\Y_m)_s$
   lies under at least one irreducible component of the special fiber
   $(\X_m)_s$ of the normalization $\X_m$ of $\Y_m$ in $K(X)$. Now let
   $z\in \Y_m$ be the intersection point of two irreducible components
   $\Gamma,\Gamma'$ of the branch divisor $D_m$ with ramification
   indices $e,e'$ respectively. By Proposition \ref{prop: structure of
     points upstairs}(iii), Definition \ref{dfn: Hirzebruch--Jung
     length of point}, and Remark~\ref{rmk: bound on HJ length of point}, there are $\gcd(\frac{n}{e},\frac{n}{e'})$
   points of $\X_m$ lying over $z$, and for each such point $x$,
   there are $\HJL(z_i)$ irreducible components of the special fiber $(\X_m')_s$ contained in the preimage of $x$ under the resolution $\X_m'\to\X_m$. It follows that
    \[N_m'\geq 1+m+\sum_{i=1}^l\gcd \left(\frac{n}{e},\frac{n}{e'} \right)\cdot\HJL(z_i). \]
    Combining this with the definition of $\cdc(\Y_m\to\Y_{m-1}\to...\to\Y_0, F)$ and writing $m = \sum_{k \colon n \nmid \nu_k} 1 + \sum_{k \colon n \mid \nu_k} 1$, we get
    \begin{align*}
        &N_m'-1+\cdc(\Y_m\to\Y_{m-1}\to...\to\Y_0, F)\\=& N_m'-1+\sum_{k:n\nmid\nu_k}[n-2\gcd(\nu_k,n)+(n-1)\mu_k(\mu_k-3)]+\sum_{k:n\mid\nu_k}[-n+(n-1)\mu_k(\mu_k-1)]\\
        &\quad+\sum_{i=1}^l \left[ \left(\frac{n}{e_i}-1 \right)+ \left(\frac{n}{e_i'}-1 \right)+\left(n-\gcd \left(\frac{n}{e_i},\frac{n}{e_i'} \right)(\HJL(z_i)+1) \right) \right]\\
        \geq& \sum_{k:n\nmid\nu_k}[1+n-2\gcd(\nu_k,n)+(n-1)\mu_k(\mu_k-3)]\\
        &\quad+\sum_{k:n\mid\nu_k} [1-n+(n-1)\mu_k(\mu_k-1)]+\sum_{i=1}^l \left[ \left(\frac{n}{e_i}-1 \right)+\left(\frac{n}{e_i'}-1 \right)+\left(n-\gcd \left(\frac{n}{e_i},\frac{n}{e_i'} \right) \right) \right]\\
        =& \cedc(\Y_m\to\Y_{m-1}\to...\to\Y_0,\F).
    \end{align*}
     Substituting this into Equation \eqref{Ecdcexpression}, we obtain the corollary.
\end{proof}

\section{Proof of the conductor-discriminant inequality via explicit analysis of multiplicity $2$ points of the branch divisor}\label{Scontribution}

In this section, we first prove the conductor exponent-discriminant inequality (\Cref{Ccedi}), which is easier to prove, and then prove the full conductor-discriminant inequality (\Cref{Tcdi}). Recall from \Cref{Lcollisions} that when the branch divisor of $\mc{X}_0 \rightarrow \mc{Y}_0$ is smooth, or equivalently, when $\nu_K(\disc_n(f)) = 0$, the normalization $\mc{X}_0$ of $\mc{Y}_0 \colonequals \P^1_{\mc{O}_K}$ in $K(X)$ is smooth, and hence the Artin conductor of $\mc{X}_0$ is $0$. When $\nu_K(\disc_n(f)) \neq 0$, the problem reduces to explicitly constructing a model $\mc{X}_m$ as the normalization of an inductively defined blowup $\mc{Y}_m$ of $\mc{Y}_0$ that partially resolves the branch locus, and such that $\X_m$ has only tame cyclic quotient singularities (\Cref{prop: reduce to mult 2}), and which has an explicit resolution $\mc{X}_m'$ with sufficiently many contractible components (\Cref{prop: non-negativity of cdc and existence of contractions}). 
\Cref{prop: reduce to mult 2} reduces the proof of \Cref{Ccedi} and \Cref{Tcdi} to the problem of explicitly understanding resolutions of multiplicity $2$ points of the branch divisor. We define explicit resolutions of these multiplicity $2$ points (\Cref{dfn: resolution of local branch divisor} and \Cref{lemma: local resolution is finite}), and then appeal to key computations of the conductor-discriminant contribution and the conductor exponent-discriminant contribution for multiplicity $2$ points from \cite[Theorems~1.6,1.7]{Part2} to deduce our results.  

\begin{notation}\label{NSection10}
    Let $\Y,\X$ be as in \Cref{notation: divisor notations}. Let $F \colonequals \divi_0(f) \subset \mc{Y}$ and let $D \colonequals (F\mod n)^{\red}$ be the branch divisor for the map $\X \rightarrow \Y$. Let $y$ be a multiplicity $2$ point of $D$, and let $F_y$ be the sum of the irreducible components of $D$ that contain $y$, with multiplicities inherited from $F$.
\end{notation}

\begin{defn}\label{dfn: resolution of local branch divisor}    
The \textit{local $n$-resolution} of $D$ at $y$ is the sequence of blowups $...\to\Y_k\to...\to\Y_0 \equalscolon \Y$ centered at reduced closed points $y_{k+1}\in\Y_k$ defined inductively as follows.
    \begin{itemize}
        \item Set $F_0'=F_0=F_y$.
        \item If the divisor $F_k'$ on $\Y_k$ contains multiple points lying over $y$ via the map $\Y_k\to\Y_0$, or if $F_k'$ contains a unique point lying over $y$ and $(F_k\tx{ mod }n)^{\red}$ has simple normal crossings at this point, the local $n$-resolution of $D$ at $y$ terminates at the stage $\Y_k\to...\to\Y_0$.
        \item Otherwise, let $y_{k+1}\in\Y_k$ be the unique point of $F_k'$ lying over $y$, let $\Y_{k+1}\to\Y_k$ be the blowup of $\Y_k$ at $y_{k+1}$, and let $F_{k+1},F_{k+1}'$ respectively denote the total and strict transforms of $F_y$ under $\Y_{k+1}\to\Y_0$, so that in particular, $F_{k+1}'$ contains at least one point lying over $y$.
\end{itemize}
\end{defn}

\begin{lemma}\label{lemma: local resolution is finite} \cite[Proposition 2.15]{Part2}
    Keep the notation of Definition \ref{dfn: resolution of local branch divisor}. The local $n$-resolution of $D$ at $y$ terminates at a finite stage $\Y_m\to...\to\Y_0$ at which $(F_m\textup{ mod }n)^{\red}$ has simple normal crossings.
\end{lemma}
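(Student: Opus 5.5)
We need to show that the local $n$-resolution of $D$ at $y$ (Definition~\ref{dfn: resolution of local branch divisor}) terminates, and that at termination $(F_m \textup{ mod } n)^{\red}$ has simple normal crossings. The plan is to compare this process with an embedded resolution of the curve germ $F_y$ and control the exceptional components that appear.

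\textbf{Step 1: set up the comparison.} Since $\mc{O}_K$ is excellent, $\Y$ is an excellent regular two-dimensional scheme. Embedded resolution of curves on excellent regular surfaces therefore applies to the reduced curve $\Supp(F_y)$ near $y$ (\cite[Theorem 9.2.26]{Liubook}, Lipman). Concretely, there is a finite sequence of blowups at closed points lying over $y$ after which the total transform of $\Supp(F_y)$ has simple normal crossings near the preimage of $y$.

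\textbf{Step 2: show our blowups follow that resolution.} Our procedure blows up only the unique point of the strict transform $F_k'$ lying over $y$, and only while $(F_k \textup{ mod } n)^{\red}$ fails to be simple normal crossings there. To handle this I will track the local invariant
\[\ell_k = \textstyle\sum$ over points $q$ of $F_k'$ above $y$ of $\len_{\O_K}(\O_{\widetilde{F_k'},q}/\O_{F_k',q})$,\]
together with the intersection number of $F_k'$ with the exceptional locus. By Lemma~\ref{lemma: length change with blowup}(ii), applied to the reduced strict transform, blowing up a point of multiplicity $\mu \geq 2$ on $F_k'$ decreases the $\delta$-invariant by $\mu(\mu-1)/2 > 0$. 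Hence only finitely many blowups occur at singular points of $F_k'$.

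Once $F_k'$ is regular at its unique point over $y$, further blowups only separate $F_k'$ from the last exceptional divisor $E$. These blowups are needed when the two meet non-transversally, or when $E$ lies in the branch divisor and meets a second branch component at the same point. Each such blowup drops the local intersection number $(F_k', E)_q$ by $1$. This number is finite because $F_k'$ is horizontal or not equal to $E$, so this stage is also finite. The process stops as soon as $F_k'$ splits into several points over $y$, which is again one of the stopping conditions.

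\textbf{Step 3: check simple normal crossings at termination.} At termination there are two cases. If the rule ended with a unique point over $y$, the simple normal crossings property holds at that point by definition. If it ended with several points $q_j$ above $y$, then each $q_j$ lies on $F_k'$ and at most on the last one or two exceptional curves. I still need that $(F_k \textup{ mod } n)^{\red}$ is simple normal crossings at each $q_j$ and along the exceptional chain. Exceptional curves are regular, and they meet one another transversally by Lemma~\ref{Lnormalcrossings}. The branches of $F_k'$ through distinct $q_j$ come from the two branches of the multiplicity-$2$ point $y$ that were separated. The key claim is that each branch is regular at $q_j$ and transverse to the exceptional curve there. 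The reason is that a branch at the separation step has multiplicity $1$ (the multiplicities sum to $2$), so the point $q_j$ has multiplicity at most $2$ in $D$ with regular transverse components. Exceptional components with $n \mid \nu_k$ drop out of the reduced branch divisor, which can only help.

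\textbf{Main obstacle.} I expect the hardest part to be Step 3 in the case where $q_j$ also lies on an exceptional curve in the branch locus. Then three components could meet at $q_j$: the branch of $F_k'$, the newest exceptional curve, and an older one. I would rule this out using the explicit tree structure of the exceptional configuration together with the fact that the center was the unique point of $F_k'$, so older exceptional curves do not pass through the new points. If this argument fails, I would fall back on the finiteness bound from Step 2 and invoke the computations of \cite{Part2}, where this proposition is proved.
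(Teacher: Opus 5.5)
You correctly flag Step~3 as the main obstacle, but it is never resolved, and that is the decisive part of the statement. When the process stops because $F_m'$ has two points $q_1,q_2$ over $y$, you must show that $(F_m\textup{ mod }n)^{\red}$ is simple normal crossings at each $q_j$. Knowing that each branch has multiplicity $1$ at $q_j$ and meets $E_m$ transversally only controls $F_m'$. Write $E_j^{(k)}$ for the strict transform of $E_j$ on $\Y_k$. Nothing you say excludes that $q_j$ is the node $E_m\cap E_{m-1}^{(m)}$, in which case three branch components pass through $q_j$; so ``multiplicity at most $2$'' does not follow.

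The principle you propose to exclude this is false. You argue that the center was the unique point of $F_k'$, so older exceptional curves miss the new points. But in Example~\ref{example: local n-resolution}, $y_2$ is the unique point of $F_1'$ over $y$, yet $F_2'$ passes through $E_2\cap E_1'$. The tree structure does not forbid this either. Falling back on \cite{Part2} is not a proof: the paper gives no argument for this lemma, only the citation \cite[Proposition~2.15]{Part2}.

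The missing idea is intersection-number bookkeeping along the chain. Let $s_k\le 2$ be the multiplicity of $(F'_{k-1})^{\red}$ at $y_k$.

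\begin{itemize}
\item Since $y_{k+1}$ is the only point of $F'_k$ over $y$, and all such points lie on $E_k$, we have $((F'_k)^{\red}\cdot E_k)=s_k$.
\item The blowup formula for local intersection numbers gives $((F'_{k+1})^{\red}\cdot E_j^{(k+1)})=((F'_k)^{\red}\cdot E_j^{(k)})-s_{k+1}$ for each exceptional curve $E_j^{(k)}$ through $y_{k+1}$.
\item $F'_{k+1}$ misses $E_j^{(k+1)}$ when $E_j^{(k)}$ does not contain $y_{k+1}$.
\item In particular, $((F'_{k+1})^{\red}\cdot E_k^{(k+1)})=s_k-s_{k+1}$.
\end{itemize}

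Now suppose $F_y$ has two branches; they are necessarily regular at $y$. Both pass through $y_1,\dots,y_m$, so $s_1=\dots=s_m=2$. Also $m\ge 2$: branches separated by one blowup of $y$ are transverse at $y$, so the process would already have stopped at stage $0$. By induction, $F'_m$ meets no exceptional curve other than $E_m$, and each branch meets $E_m$ with intersection number $1$ at its own point $q_j$. This gives simple normal crossings at $q_1$ and $q_2$. Elsewhere over $y$ only exceptional curves occur, and these are handled by Lemma~\ref{Lnormalcrossings}.

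The same count repairs your finiteness argument in the phase where $F'$ is regular at its unique point. The quantity you track is not a decreasing invariant: the intersection of $F'$ with the newest exceptional curve goes $2\to1\to1$. However, the formulas above show the process stops at most two blowups after $F'$ becomes regular, as in Example~\ref{example: local n-resolution}.

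Alternatively, let your Step~1 do this work. Every center is a non-SNC point of $(F_k)^{\red}$, since a subdivisor of an SNC divisor is SNC. Hence any embedded resolution of $\Supp F_y$ factors through every $\Y_k$, which bounds $m$.
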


The local $n$-resolution of a general multiplicity $2$ point is described explicitly in \cite[Section 3]{Part2}. We include a simple example to illustrate. 

\begin{example}\label{example: local n-resolution}
     
Suppose $\tx{char}(k)\neq 3$, and let $X$ be the smooth projective curve over $K$ with affine equation $w^3=x^3-\pi_K^2$. Let $\Y=\P^1_{\O_K}$, let $\X$ be the normalization of $\Y$ in $K(X)$ and let $D$ be the branch divisor of $\X \rightarrow \Y$. Then $D$ is the flat closure of the point of $\P^1_K$ defined by the ideal $(x^3-\pi_K^2)\subset K[x]$, the unique closed point $y$ on $D$ is defined by the ideal $(x,\pi_K)\subset \O_K[x]$, and $y$ is a singular point of $D$ of multiplicity $2$.

  An explicit computation shows that the local $3$-resolution terminates after two blow-ups $\Y_2\to\Y_1\to\Y_0=\Y$. Furthermore, if $E_1,E_2$ are the exceptional divisors of $\Y_1\to\Y_0$ and $\Y_2\to\Y_1$ respectively, and if $E_1'$ is the strict transform of $E_1$ on $\Y_2$, then
    \begin{align*}
     &F_0=F_0'=F_y=D, &(F_0\tx{ mod }3)^\tx{red}=F_0', \\
     &F_1=F_1'+2E_1, &(F_1\tx{ mod }3)^\tx{red}=F_1'+E_1, \\
     &F_2=F_2'+2E_1'+3E_2, &(F_2\tx{ mod }3)^\tx{red}=F_2'+E_1'.
    \end{align*}
where the divisors $F_1'$ and $E_1$ on $\Y_1$ intersect non-transversely, and the divisors $F_2',E_1',$ and $E_2$ on $\Y_2$ meet pairwise transversely at a single common point.

For more details of the computation above, apply \cite[Propositions 3.7, 3.11, and 3.13]{Part2} with $n=e_0=3$ and $j=1$. Note that in this case the local $3$-resolution of $y$ differs from the full embedded resolution of $y$ since the total transform $F_2$ of $D$ is not a simple normal crossings divisor; in this case the full embedded resolution of $y$ needs one additional blowup. (See, e.g., \cite[Chapter~V, Example~3.9.1]{hartshorne} for the analogous computation in the geometric setting of a surface singularity over $\mathbb{C}$, with $y$ in \cite{hartshorne} playing the role of our $\pi_K$.)
\end{example}

\begin{defn}\label{dfn: cdc of a point}
    Keep the notation of \Cref{dfn: resolution of local branch divisor} and recall \Cref{dfn: cdc of sequence}. Define the 
    \textit{conductor-discriminant contribution} $\cdc(y)$ and
    \textit{conductor exponent-discriminant contribution} $\cedc(y)$ of $y$ to be those associated to the local $n$-resolution $\Y_m\to...\to\Y_0$ of $D$ at $y$ and divisor $F_y$ on $\Y$. 
\end{defn}

The next lemma shows the quantities $\cdc(y)$ and $\cdc(\Y_m\to...\to\Y_0,\divi_0(f))$ are the same except that the crossing factor and crossing bonus in $\cdc(\Y_m\to...\to\Y_0,\divi_0(f))$ may have additional contributions  from points lying outside of the preimage of $y$ on $\Y_m$. It is an easy consequence of Definition \ref{dfn: resolution of local branch divisor} (see \cite[Lemma 2.17]{Part2} for a short argument).

\begin{lemma}\label{lemma: compatible resolutions} \cite[Lemma 2.17]{Part2}
    Suppose the local $n$-resolution of $D$ at $y$ begins with the sequence of blow-ups $\Y_k\to...\to\Y_0\equalscolon\Y$.
    \begin{enumerate}[\upshape (i)]
        \item $(F_k\textup{ mod }n)^{\red}$ is the sum of the irreducible components of $(\divi_0(f)\textup{ mod }n)^{\red}$ that intersect the preimage of $y$ on $\Y_k$.
        \item The multiplicity $2$ simple normal crossings points of $(F_k\textup{ mod }n)^{\red}$ are exactly those of $(\divi_0(f)$ $\textup{mod }n)^{\red}$ that are contained in the preimage of $y$ on $\Y_k$.
        \item If $y'\in\Y_k$ is a closed point lying over $y$ via $\Y_k\to\Y_0$, then $\mult_{y'}(\divi_0(f))\equiv \mult_{y'}(F_k^*)$ \textup{mod} $n$ and $\mult_{y'}((\divi_0(f)\textup{ mod }n)^{\red})=\mult_{y'}((F_k\textup{ mod }n)^{\red})$.
    \end{enumerate}
\end{lemma}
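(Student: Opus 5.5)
The plan is to compare, at each stage $k$ of the local $n$-resolution, the divisors $F_k$ (total transform of $F_y$) and $G_k$ (total transform of $\divi_0(f)$) near the preimage of $y$. We induct on $k$. We also record the invariant that, near the exceptional locus over $y$, $G_k - F_k$ is the strict transform of the part of $\divi_0(f)$ not in $F_y$, plus multiples of $n$ times vertical components.

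We begin by writing $\divi_0(f) = F_y + F'' + nA$ locally at $y$. Here $F''$ collects the components of $\divi_0(f)$ passing through $y$ whose multiplicity is divisible by $n$; these are exactly the components through $y$ not in $D$. The remaining components of $\divi_0(f)$ through $y$ lie in $D$, and they are already in $F_y$ with the same multiplicities. Since $F''$ has all multiplicities divisible by $n$, we can write $F'' = nA$ with $A$ effective, so $\divi_0(f) - F_y$ is $n$ times an effective divisor in a neighborhood of $y$. Components not passing through $y$ only affect points outside the preimage of $y$, after shrinking to a neighborhood.

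The key step is pullback compatibility. Total transforms under blowups are additive, so on $\Y_k$ the total transform of $\divi_0(f)$ equals $F_k + n\,\pi_k^*A$ near the preimage of $y$. Every coefficient of $n\pi_k^*A$, including those on exceptional components, is a multiple of $n$. Reducing mod $n$ and taking $\red$ therefore gives identical divisors in a neighborhood of the preimage of $y$. Part (iii) follows immediately: multiplicities at $y'$ are additive, so $\mult_{y'}$ of the two total transforms differ by $n\,\mult_{y'}(\pi_k^*A)$, and the reduced divisors agree near $y'$.

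Part (i) follows by globalizing. A component of $(\divi_0(f) \bmod n)^{\red}$ on $\Y_k$ meeting the preimage of $y$ appears in $(F_k \bmod n)^{\red}$ by the local equality. Conversely, every component of $(F_k \bmod n)^{\red}$ is either exceptional or the strict transform of a component of $F_y$, and in either case it meets the preimage of $y$. Part (ii) then follows from (i) together with the local equality, because the property of being a multiplicity $2$ simple normal crossings point is local.

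The main obstacle is the care needed in writing $\divi_0(f) - F_y$ as $n$ times an effective divisor only in a neighborhood of $y$ rather than globally. Components through $y$ whose multiplicity is divisible by $n$ could carry a nonzero residual divisible by $n$, and this needs checking. The other point to verify is that the centers $y_{k+1}$ always lie over $y$, so that all exceptional divisors live inside the neighborhood where the local equality holds. This is guaranteed by Definition~\ref{dfn: resolution of local branch divisor}.
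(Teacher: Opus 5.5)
\textbf{Verdict: there is a genuine gap, in the global half of (ii).} The paper gives no proof of this lemma; it only calls it an easy consequence of Definition~\ref{dfn: resolution of local branch divisor} and cites \cite[Lemma 2.17]{Part2}, so I am judging your argument on its own.

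\textbf{What is correct.} Your local comparison is sound. Near $y$, every component of $\divi_0(f)$ through $y$ either lies in $F_y$ with its full multiplicity or has multiplicity divisible by $n$. Hence $\divi_0(f)-F_y=nA$ with $A$ effective on a neighbourhood $U$ of $y$; the point you flag as the ``main obstacle'' is immediate. Pulling back to $\pi_k^{-1}(U)\supseteq\pi_k^{-1}(y)$ gives (iii), gives (i), and gives the statement of (ii) at points lying over $y$.

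\textbf{The gap.} Statement (ii) also asserts that $(F_k \bmod n)^{\red}$ has \emph{no} multiplicity $2$ simple normal crossings points outside $\pi_k^{-1}(y)$. This does not follow from (i) together with the local equality, and ``being a multiplicity $2$ snc point is local'' only compares the two divisors where they agree.
\begin{itemize}
\item The components of $(F_k\bmod n)^{\red}$ that are strict transforms of components of $F_y$ leave $\pi_k^{-1}(U)$.
\item Two of them could a priori meet again at a point $y''\neq y$ of $\Y$.
\item The corresponding point of $\Y_k$ would then be a multiplicity $2$ point of $(F_k\bmod n)^{\red}$ not lying over $y$. It need not even be a multiplicity $2$ point of the full branch divisor, if other branch components pass through $y''$.
\end{itemize}
This global half is exactly what the paper uses the lemma for: it ensures the crossing terms in $\cdc(y)$ come only from points over $y$, so the decomposition in Proposition~\ref{prop: reduce to mult 2} has no double counting.

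\textbf{How to close it.} You need a global geometric input.
\begin{itemize}
\item Since $\mult_y D=2$, $F_y$ has at most two components.
\item A horizontal prime divisor on $\Y$ is proper and quasi-finite, hence finite, over $\O_K$. Because $\O_K$ is Henselian, it is a local scheme, so $y$ is its only closed point.
\item Vertical components are regular, and two of them meet in at most one point, because the dual graph of the special fiber of a regular model of $\P^1_K$ is a tree (Lemma~\ref{Lnormalcrossings}).
\item Hence $y$ is the only singular point of $\Supp F_y$.
\item The isomorphism $\Y_k\setminus\pi_k^{-1}(y)\cong\Y\setminus\{y\}$ identifies $(F_k\bmod n)^{\red}$ there with $\Supp F_y\setminus\{y\}$. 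So $(F_k\bmod n)^{\red}$ has multiplicity at most $1$ off $\pi_k^{-1}(y)$.
\end{itemize}
Some such input is unavoidable. Over a non-Henselian base, a horizontal component of $F_y$ could meet a vertical component of $F_y$ transversally at a second point $y''$. Then (ii) already fails for $k=0$.
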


We also have:

\begin{prop}\label{prop: error term cancellation}\cite[Proposition 4.7]{Part2} 
    Let $D$ be as in Notation~\ref{NSection10}. Let $y$ be a multiplicity $2$ point of $D$ contained in two regular irreducible components of $D$, whose ramification indices are $e$ and $e'$. If $y$ is a simple normal crossings point of $D$, or if $e\neq e'$, then $\cedc(y)\geq \cdc(y)\geq 2(n/e)-2$.
\end{prop}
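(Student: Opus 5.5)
We would prove Proposition~\ref{prop: error term cancellation} by computing both contributions directly from Definition~\ref{dfn: cdc of sequence}, applied to the local $n$-resolution of $D$ at $y$ (Definition~\ref{dfn: resolution of local branch divisor}). We split into two cases.

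\textbf{First case: $y$ is a simple normal crossings point of $D$.} Then the local $n$-resolution terminates immediately, with $m=0$. The only term of $\cdc(y)$ is the crossing term for $z_1=y$. Writing $g=\gcd(n/e,n/e')$, this term equals
\[
\left(\tfrac{n}{e}-1\right)+\left(\tfrac{n}{e'}-1\right)+n-g\,(\HJL(y)+1).
\]
By Remark~\ref{rmk: bound on HJ length of point} we have $\HJL(y)\le \gcd(e,e')-1$, so $g(\HJL(y)+1)\le g\gcd(e,e')$. Here we use the elementary identity $\gcd(n/e,n/e')\gcd(e,e')\le n$. This holds because the first factor equals $n/\lcm(e,e')$ and $\gcd(e,e')\le\lcm(e,e')$. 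It follows that $\cdc(y)\ge (n/e-1)+(n/e'-1)$.

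Choosing labels so that $n/e'\ge n/e$ would give $\cdc(y) \ge 2(n/e)-2$ only for the smaller of the two quantities. The statement is symmetric only in that sense, so we interpret $e$ as the larger index, or else prove the sharper bound $\ge (n/e-1)+(n/e'-1)$, which dominates $2\min(n/e,n/e')-2$. The comparison $\cedc(y)\ge\cdc(y)$ follows from the definitions because $m=0$: the crossing terms differ by $g\,\HJL(y)\ge 0$.

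\textbf{Second case: $e\ne e'$ and $y$ is not an snc point.} Here we need the full shape of the local $n$-resolution. The plan is to track the sequence of multiplicities $\nu_k$ and $\mu_k$. At each blowup in this process $\mu_k=2$, since the strict transforms meet in a single point until termination. Each blowup therefore contributes either $n-2\gcd(\nu_k,n)-2(n-1)$ if $n\nmid\nu_k$, or $-n+2(n-1)=n-2$ if $n\mid\nu_k$. In $\cedc$ these become $3-n-2\gcd(\nu_k,n)$ and $n-1$ respectively.

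The first kind of term is negative, so we must show it is compensated. The compensation comes from the snc crossing points created at the end, which involve the exceptional components. Their ramification indices $n/\gcd(n,\nu_k)$ are small, so the terms $n/e_i-1$ and $n-g(\HJL+1)$ are large. The relation between $\cedc$ and $\cdc$ again follows from the extra $+1$ per blowup together with the $+g\,\HJL$ per crossing. Hence $\cedc(y)\ge\cdc(y)$ holds in general, and the real content is the lower bound on $\cdc(y)$.

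To organize the count, we parametrize locally by $F_y=\nu_1 D_1+\nu_2 D_2$, where $D_1,D_2$ are regular and have contact order $j\ge 2$ at $y$. The blowups then resemble a Euclidean-type algorithm: the multiplicities evolve as $\nu_1+\nu_2$, then $2\nu_1+\nu_2$, and so on, until the strict transforms separate. This is the explicit description referenced from \cite[Section 3]{Part2}.

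\textbf{Main obstacle.} The hardest step is the exact bookkeeping of the Hirzebruch--Jung lengths at the final crossing points, where three components may be involved, for instance when one exceptional component is unramified mod $n$. We would first handle contact order $j$ with $\nu_1+\nu_2\not\equiv0$, and then use the hypothesis $e\ne e'$ to rule out the bad configuration. That bad configuration is $e=e'$ with $\nu_1+\nu_2\equiv 0\bmod n$, which is essentially the rational double point case excluded in the method-of-proof discussion.
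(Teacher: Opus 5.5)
The paper does not prove this statement; it imports it from \cite[Proposition 4.7]{Part2}. So I am judging your proposal on its own terms. Your observation that $\cedc(y)-\cdc(y)=m+\sum_i \gcd(n/e_i,n/e_i')\HJL(z_i)\ge 0$ is correct, and it settles the first inequality. The lower bound on $\cdc(y)$, however, has two genuine gaps.

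\emph{The snc case.} You prove only $\cdc(y)\ge (n/e-1)+(n/e'-1)$ and then restrict to the labelling with $e\ge e'$. That is a weakening of the statement, not an interpretation of it. The labels in the statement are arbitrary, and the proof of Proposition~\ref{prop: reduce to mult 2} needs them to be. There, $e=n/\gcd(b,n)$ is forced to be the index of the special fiber, while the other component may have larger index (e.g.\ $e'=n$). In that case your bound gives only $\gcd(b,n)-1$, which is less than the required $2\gcd(b,n)-2$.

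The repair is easy if you keep the term $n-g(\HJL(y)+1)$ instead of discarding it. Write $\alpha=\gcd(e,e')$, $e=\alpha p$, $e'=\alpha q$, so that $g=n/\lcm(e,e')=n/(\alpha pq)$ and $\HJL(y)+1\le\alpha$. Then $n-g(\HJL(y)+1)\ge n(pq-1)/(pq)\ge n|p-q|/(\alpha pq)=|n/e-n/e'|$, since $pq-1-(p-q)=(p+1)(q-1)\ge 0$ for $p\ge q$. Hence $\cdc(y)\ge 2\max(n/e,n/e')-2$, which is the bound the statement actually requires.

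\emph{The non-snc case.} This case is not proved at all; only a plan is given, and the plan rests on incorrect data. Write $F_y=a\Gamma_1+a'\Gamma_2$ with $\Gamma_i$ regular of contact order $j\ge 2$ at $y$, and set $s=a+a'$.
\begin{itemize}
\item For $k\ge 2$ the center $y_k$ lies on both strict transforms \emph{and} on $E_{k-1}$, though not on $E_{k-2}$. Hence $\nu_k=ks$. There is no Euclidean-type recursion; that behaviour belongs to a single singular branch, as in Example~\ref{example: local n-resolution}.
\item Consequently $\mu_k=3$, not $2$, whenever $n\nmid (k-1)s$. For instance, already for $k=2$ the center is a triple point of $\Gamma_1'+\Gamma_2'+E_1$ when $e\ne e'$.
\item The exceptional curves form a chain $E_1-\cdots-E_m$, with $m=j-1$ if $n\mid (j-1)s$ and $m=j$ otherwise.
\item The terminal crossings are the points $E_k\cap E_{k+1}$ with both curves ramified, together with either $\Gamma_1\cap E_m$ and $\Gamma_2\cap E_m$ (when $m=j$) or $\Gamma_1\cap\Gamma_2$ (when $m=j-1$).
\item The hypothesis $e\ne e'$ enters simply because it forces $n\nmid s$: if $a'\equiv -a \bmod n$, then $\gcd(a,n)=\gcd(a',n)$. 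There is no further configuration to exclude.
\end{itemize}
With the correct data, every blowup term is $\ge 0$ except the terms $-n+2-2\gcd(s,n)$ at indices $k\equiv 1 \bmod n/\gcd(s,n)$. The real content of the proposition is to show that the crossing terms at the end of the chain absorb these negative terms and still leave at least $2\max(n/e,n/e')-2$. Your proposal never carries out this bookkeeping, so the case is unproved.
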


 We now explain why resolutions of multiplicity $2$ points are the bottlenecks for deducing the
 conductor-discriminant and conductor exponent-discriminant
 inequalities easily from the results thus far.  We begin by showing that we can first resolve the
 points of multiplicity $\geq 3$ in the branch divisor until there are no such points left, and we then
 construct explicit resolutions of points of multiplicity $2$ one by
 one. To that end, we define the following arithmetic schemes. For the
 remainder of the paper, for an $\mc{O}_K$-model $\Y_i$ of $\P^1_K$,
 we let $\X_i$ denote the normalization of $\Y_i$ in $K(X)$ and we let
 $D_i$ denote the branch divisor of the map $\X_i
 \rightarrow \Y_i$.
 

    \begin{lemma}\label{lem:resolveinorder}
     There is a sequence of blowups $\Y_j\to\Y_{j-1}\to...\to\Y_0\colonequals \Y$ at reduced closed points $y_k \in \Y_{k-1}$, such that
     \begin{enumerate}[\upshape (i)]
        \item for all $1\leq k\leq j$, the point $y_k$ has multiplicity $\geq 3$ in the branch divisor $D_{k-1}$, and,
        \item there are no points of multiplicity $\geq 3$ in the branch divisor $D_j$.
    \end{enumerate}
    \end{lemma}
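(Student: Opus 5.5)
The plan is to construct the sequence greedily and show that it terminates. Set $\Y_0 \colonequals \Y$. If $D_0$ has no point of multiplicity $\geq 3$ we take $j=0$. Otherwise $D_{k-1}$ has only finitely many such points: $D_{k-1}$ is a reduced divisor on the excellent regular surface $\Y_{k-1}$, so its singular locus is a finite set of closed points. We choose one such point $y_k$ and let $\Y_k \to \Y_{k-1}$ be the blowup at $y_k$. Then (i) holds by construction, and the lemma follows once we know this process stops. So the entire content is a termination argument.

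For termination we use a decreasing invariant. Define
\[\lambda(\Y_k) \colonequals \len_{\O_K}(\mathcal{O}_{\tilde D_k}/\mathcal{O}_{D_k}),\]
where $\tilde D_k$ is the normalization of the reduced branch divisor $D_k$. This is a nonnegative integer: $D_k$ is a reduced excellent curve, so $\tilde D_k \to D_k$ is finite and birational, and the quotient is supported at the finitely many singular points of $D_k$.

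Write $\mu = \mult_{y_k}(D_{k-1}) \geq 3$. By Lemma~\ref{lemma: length change with blowup}(ii),
\[\lambda(\Y_{k-1}) - \lambda(\Y_k) = \frac{\mu(\mu-1)}{2} \ \text{or}\ \frac{\mu(\mu-3)}{2},\]
according as $n \mid \nu$ or $n \nmid \nu$. In both cases the drop is $\geq \mu(\mu-3)/2 \geq 0$.

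The main obstacle is the case $\mu = 3$ with $n \nmid \nu$. There the drop is $0$, because the exceptional divisor $E$ joins the branch divisor and absorbs the gain. To handle it, I would first try a lexicographic invariant: the pair $(\lambda, \sum_{y}\mult_y(D)(\mult_y(D)-1))$, where the sum runs over the points of $D$ of multiplicity $\geq 3$. In the bad case, the new points of $D_k$ lying on $E$ are the intersections of $E$ with the strict transform $D'$ of $D_{k-1}$. Their total intersection number is $(D'\cdot E) = 3$, and each new point $z$ has
\[\mult_z(D_k) = 1 + \mult_z(D') \leq 1 + (D'\cdot E)_z.\]
So at most one new point can have multiplicity $\geq 3$. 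If it does, it has multiplicity at most $4$, and equality forces $D'$ to meet $E$ at a single point with $D'$ of multiplicity $3$ there, that is, $D'$ tangent to $E$ with the full multiplicity.

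That residual configuration needs a separate argument. Blowing up again changes the situation: the new exceptional divisor meets the previous exceptional divisor $E$ transversally, so $\nu$ and the multiplicities behave differently. The fallback is the classical argument. Embedded resolution of curves on excellent regular surfaces (\cite[Theorem~9.2.26]{Liubook}) shows that every infinitely near singular point of the reduced curve $D_0 \cup (\Y_0)_s$ is eventually separated after finitely many point blowups. Since $D_k$ is always contained in the total transform of $D_0 \cup (\Y_0)_s$, the multiplicity of $D_k$ at any point is at most the number of branches of that total transform through it, and at most $2$ once the total transform has normal crossings. Our blowup centers are all among these infinitely near points, so only finitely many blowups can occur. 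I would present this embedded-resolution argument as the main proof, using the invariant $\lambda$ only to simplify the counting.
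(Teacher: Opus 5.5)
Your final embedded-resolution argument is correct and is essentially the paper's proof. The paper likewise bounds the blowups at multiplicity-$\geq 3$ points by an embedded resolution from \cite[Theorems~9.2.26 and 9.2.2]{Liubook}, but packages it as induction on the minimal number $r$ of blowups making $\pi^*D$ normal crossings: any such resolution must blow up the bad point $y$, this blowup can be done first, and $D_1 \leq (\pi_1')^*D$ then needs at most $r-1$ blowups. Your version instead claims the set of infinitely near points where the total transform is not normal crossings is finite, which additionally uses the easy fact that points infinitely near a normal crossings point are again normal crossings points. Two minor points: the $\lambda$-invariant detour is unnecessary, and your remark that multiplicity is bounded by the number of branches holds only at normal crossings points (a cusp has one branch and multiplicity $2$), though that is the only case you actually use.
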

    \begin{proof}


    Since $\O_K$ is excellent, by the theorem on the existence of embedded
    resolutions of curves on arithmetic surfaces
    \cite[Theorems 9.2.26 and 9.2.2]{Liubook}, there is a modification $\pi \colon
    \Y' \to \Y$, where $\pi$ is given as a composition 
    $$\Y' =: \Y'_r \xrightarrow{\pi_r'} \Y'_{r-1} \xrightarrow{\pi_{r-1}'} \cdots \xrightarrow{\pi_1'} 
    \Y'_0 := \Y$$ of blowups at
    reduced closed points $y'_i \in \Y'_{i-1}$, such that $\pi^*D$
    has normal crossings. In particular, the divisor $(\pi^*D)^\tx{red}$ contains no points of multiplicity $\geq 3$. Since the branch divisor $D'$ of the normalization $\X'\to\Y'$ of $\Y'$ in $K(X)$ satisfies $D'\leq (\pi^*D)^\tx{red}$, then $D'$ also contains no points of multiplicity $\geq 3$, and the sequence of blowups $\Y_r'\to...\to\Y_0'$ satisfies (ii).

    
    It is possible however that some $y_i'$
    has multiplicity $< 3$ in the appropriate branch divisor, and thus part (i) is not
    satisfied. So the idea is to do the process above, but without doing any of
    the blowups at points of multiplicity $< 3$ in the appropriate
    branch divisor.  More precisely, we prove our general lemma by induction on $r$, the minimal number of point blowups required for the pullback of $D$ to have normal crossings.  If $r = 0$, then $D$ has normal crossings and thus no points
    of multiplicity $\geq 3$, so we can take $\mc{Y}_j = \mc{Y}$.  In
    fact, regardless of $r$, if $D$ has no points of multiplicity
    $\geq 3$, we can take $\Y_j = \Y$.

    Thus we may assume that $D$ has a point $y$ of multiplicity $\geq
    3$.  Since $y$ is an isolated singularity, we may further assume that $y =
    y_0'$, that is, $\pi_1' \colon \Y'_1 \to \Y$ is the blowup at $y$. Set $\Y_1 =
    \Y_1'$. Let $\pi' \colonequals \pi_r' \circ \cdots \circ \pi_2'$. Then the pullback $(\pi')^*D_1$ of the branch divisor $D_1$ has normal crossings since $(\pi')^*D_1 \leq (\pi')^*(\pi_1^*D)$ and since  $(\pi')^*(\pi_1^*D) = (\pi' \circ \pi_1')^*D$ has normal crossings by definition of the $\pi_i'$. and since
    $D_1 \leq (\pi_1')^* D$.  By induction, there is a sequence
    of point blowups of $\Y_1$ satisfying parts (i) and (ii) of the
    lemma, so we are done.
    \end{proof}

    Let $\Y_j$ be a model satisfying the assumptions in
    \Cref{lem:resolveinorder}, and let $z_1,...,z_w$ be the points of
    multiplicity $2$ in $D_j \subset \Y_j$. For $1\leq i\leq w$, we
    inductively define schemes $\Y_{j+i}$ such that there is a
    birational morphism $\Y_{j+i} \rightarrow \Y_j$ that is an
    isomorphism away from the set $\{z_1,\ldots,z_{i-1}\}$ in $\Y_j$
    and its preimage in $\Y_{j+i}$. In particular, above each point in
    the set $\{z_{i},\ldots,z_w\}$, there will be a unique point in
    $\Y_{j+i-1}$ also of multiplicity $2$ in the corresponding branch
    divisor $D_{j+i-1}$. Define $\Y_{j+i}\to\Y_{j+i-1}$ to be the
    local $n$-resolution of the branch divisor $D_{j+i-1}$ at the
    unique point of $\Y_{j+i-1}$ lying over $z_i$.
    Let $m=j+w$.

\begin{prop}\label{prop: reduce to mult 2} Suppose $\Y=\P^1_{\O_K}$. Let $\Y_j$ be as in \Cref{lem:resolveinorder}, and define $\Y_m,\X_m,D_m$ as
above. Then the branch divisor $D_m$ of $\X_m \rightarrow
\Y_m$ has simple normal crossings, the model $\X_m$ has only tame
cyclic quotient singularities, and if $\X_m'\to\X_m$ is the minimal
regular resolution, then there is a subset $A\subseteq\{z_1,...,z_w\}$ such that
    \begin{align*}
        (n-1)\nu_K(\disc_n(f))-\Art^+(\mathcal{X}_m')&\geq\sum_{z\in A}\cdc(z),
    \end{align*}
    and
    \begin{align*}
        (n-1)\nu_K(\disc_n(f))-\phi(X/K)&\geq\sum_{z\in A}\cedc(z).
    \end{align*}
\end{prop}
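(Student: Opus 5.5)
The plan is to apply Proposition~\ref{prop: cond-disc ineq cdc version} and Corollary~\ref{cor: exp-disc ineq cedc version} to the full sequence of blowups $\Y_m\to\cdots\to\Y_j\to\cdots\to\Y_0=\P^1_{\O_K}$ with $F=\divi_0(f)$. Then we decompose the resulting $\cdc$ (resp.\ $\cedc$) into three kinds of pieces. The first consists of the contributions of the blowups $\Y_k\to\Y_{k-1}$ for $k\le j$, which are centered at points of multiplicity $\mu_k\ge 3$ in the branch divisor. The second consists of the contributions of each local $n$-resolution at the $z_i$. The third consists of crossing terms at multiplicity $2$ simple normal crossings points of $D_m$ not lying over any $z_i$ that was blown up. We also have to account for the correction term $2-2\gcd(b,n)$ when $b\ge1$.

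First I check the hypotheses. By Lemma~\ref{lem:resolveinorder}, $D_j$ has no points of multiplicity $\ge3$. The local $n$-resolutions at distinct $z_i$ are performed over disjoint neighborhoods, so by Lemma~\ref{lemma: local resolution is finite} combined with Lemma~\ref{lemma: compatible resolutions}(i),(ii), the branch divisor $D_m$ is simple normal crossings above each $z_i$. Away from the $z_i$, $D_j$ has only multiplicity $\le 1$ points, so it is regular there. Hence $D_m$ is simple normal crossings. Proposition~\ref{prop: structure of points upstairs} then shows that $\X_m$ has only tame cyclic quotient singularities.

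Next I bookkeep. By Lemma~\ref{lemma: compatible resolutions}(iii), the multiplicities $\nu_k \bmod n$ and $\mu_k$ computed for $F_{z_i}$ agree with those computed for $\divi_0(f)$. By (ii), the multiplicity $2$ points of $D_m$ above $z_i$ are exactly those appearing in $\cdc(z_i)$. So the global $\cdc$ equals $\sum_i \cdc(z_i)$ plus the $j$ blowup terms. For $\mu_k\ge3$, each blowup term is nonnegative: if $n\nmid\nu_k$ the term is $n-2\gcd(\nu_k,n)+(n-1)\mu_k(\mu_k-3)\ge n-2\cdot\tfrac n2\ge0$, since $\gcd(\nu_k,n)\le n/2$. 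If $n\mid\nu_k$ the term is $-n+6(n-1)\ge0$. The same holds for $\cedc$, with one more added. Some $z_i$ may have been multiplicity $2$ points of $D_j$ that are already simple normal crossings, in which case the local resolution is trivial; there, or whenever $e\neq e'$, Proposition~\ref{prop: error term cancellation} gives $\cdc(z_i)\ge 2(n/e)-2\ge0$. The points with possibly negative contribution are kept in $A$. Thus I take $A$ to be the set of $z_i$ with $\cdc(z_i)<0$ or non-snc with $e=e'$, and discard the others since they contribute nonnegatively.

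The main obstacle is the term $2-2\gcd(b,n)$ when $b\ge1$, which is negative unless $\gcd(b,n)=1$. The special fiber $(\Y_0)_s$ is then a vertical branch component with ramification index $e=n/\gcd(b,n)$. Because the genus is at least $1$, at least three horizontal branch points meet it, so $D_0$ has at least three points where the horizontal part crosses the vertical component. I would absorb the correction using these crossings. Either such a point is a transversal multiplicity $2$ point, and Proposition~\ref{prop: error term cancellation} gives each such $z$ a contribution of at least $2\gcd(b,n)-2$, which at least one such point (excluded from $A$) cancels. Or it has multiplicity $\ge3$, in which case the blowup term at that point has slack at least $(n-1)\mu(\mu-3)+n-2\gcd\ge n-2\gcd(\nu,n)$, which I would need to compare against $2\gcd(b,n)-2$. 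In that last case I expect to have to use $\gcd(\nu,n)$ in relation to $\gcd(b,n)$ more carefully, possibly via the extra crossing terms it generates downstream. This is the step I expect to require the most care.
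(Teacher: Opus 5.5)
Your setup and bookkeeping agree with the paper's proof: the simple normal crossings property of $D_m$, the tame cyclic quotient singularities, and the decomposition of the global $\cdc$ and $\cedc$ into $\sum_i\cdc(z_i)$ plus nonnegative terms from the first $j$ blowups. Your smaller choice of $A$ is also harmless, since every discarded point has $\cedc\ge\cdc\ge0$ by Proposition~\ref{prop: error term cancellation}.

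\textbf{The gap.} It is exactly where you anticipated: absorbing $2-2\gcd(b,n)$ when $2\le\gcd(b,n)<n$. You leave this open, and the route you sketch does not work as stated.

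\emph{The case $j=0$.} Your dichotomy (transversal multiplicity-$2$ crossing versus multiplicity $\ge 3$) omits tangential multiplicity-$2$ crossings of a horizontal branch component with $\Gamma=(\Y_0)_s$. If that component has ramification index $e'=e=n/\gcd(b,n)$, Proposition~\ref{prop: error term cancellation} gives nothing. Also, the $\ge 3$ geometric branch points may all specialize to a single point of $\Gamma$, so you do not get three crossing points. The paper's fix is to choose a horizontal branch component $\Delta$ with $e'\neq e$. Such a $\Delta$ exists: otherwise $\gcd(d_i,n)=\gcd(b,n)\ge 2$ for all $i$, so $f$ would be a $\gcd(b,n)$-th power and $X$ would not be geometrically integral. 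Since $j=0$, the point where $\Delta$ meets $\Gamma$ is a multiplicity-$2$ point of $D_0$ lying on two regular components. Because $e\neq e'$, the proposition applies regardless of transversality and gives $\cdc\ge 2\gcd(b,n)-2$.

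\emph{The case $j>0$.} Comparing the slack of the blowup term with $2\gcd(b,n)-2$ cannot work. If $n\nmid\nu_k$ and $\mu_k=3$, the term equals $n-2\gcd(\nu_k,n)$, which is $0$ when $\gcd(\nu_k,n)=n/2$. The missing idea is to look at $\Y_j$ rather than at the blown-up point. Since $(\Y_j)_s$ is connected, the strict transform $\Gamma'_j$ meets the exceptional locus at some point $y$. By Lemma~\ref{Lnormalcrossings}, $y$ lies on exactly one exceptional component $(E_k)'_j$, which meets $\Gamma'_j$ transversally. There are two cases.
\begin{itemize}
\item If $E_k$ is ramified, then $y$ lies on two transversal vertical branch components. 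Since $D_j$ has no points of multiplicity $\ge 3$, $y$ is a multiplicity-$2$ simple normal crossings point of $D_j$, i.e.\ one of the $z_i$. Proposition~\ref{prop: error term cancellation} gives $\cdc(y)\ge 2\gcd(b,n)-2$, so you remove $y$ from $A$.
\item If $E_k$ is unramified, then $n\mid\nu_k$ and $\mu_k\ge 3$. The $k$-th blowup term is then at least $5n-6>n-2\ge 2\gcd(b,n)-2$, and it absorbs the correction by itself.
\end{itemize}
This ramified/unramified dichotomy for the exceptional component adjacent to $\Gamma'_j$, together with the choice $e'\neq e$ when $j=0$, is what your proposal is missing.
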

\begin{proof}
    By assumption (ii) of \Cref{lem:resolveinorder}, the non-regular points of $D_j$ are exactly $z_1,...,z_w$. Since these points form an isolated set, Lemma \ref{lemma: local resolution is finite} implies $D_m$ has normal crossings, and Lemma \ref{lemma: compatible resolutions}(ii) and (iii) imply
    \begin{align*}
    \cdc(\Y_m\to...\to\Y_0,F)&=\sum_{i=1}^w\cdc(z_i)\\
    &+\sum_{k\leq j:n\nmid\nu_k}[n-2\gcd(\nu_k,n)+(n-1)\mu_k(\mu_k-3)]\\
        &+\sum_{k\leq j:n\mid\nu_k}[-n+(n-1)\mu_k(\mu_k-1)],
    \end{align*}
    where the second two sums run only over $k\leq j$, and $\mu_k,\nu_k$ denote the multiplicities of the point $y_k$ in the divisors $D_{k-1}$ and $\divi_0(f)$ on $\Y_{k-1}$ respectively.
    
    For $k\leq j$, we have $\mu_k\geq 3$ by assumption (i), and since $n\geq 2$, it follows that $-n+(n-1)\mu_k(\mu_k-1)\geq 0$ when $n \mid \nu_k$, and that $n-2\gcd(\nu_k,n)+(n-1)\mu_k(\mu_k-3)\geq0$ when $n\nmid\nu_k$. 
    It follows that
    \begin{equation}\label{EInequality1}
    \begin{aligned}
    \cdc(\Y_m\to...\to\Y_0,F)\geq\sum_{i=1}^w\cdc(z_i).
      \end{aligned}
      \end{equation}
    Similarly, we obtain
    \begin{equation}\label{EInequality2}
    \begin{aligned}
      \cedc(\Y_m\to...\to\Y_0,F)\geq\sum_{i=1}^w\cedc(z_i).
      \end{aligned}
      \end{equation}
    In the case $b=0$ or $\gcd(b,n)=1$, if we take $A=\{z_1,...,z_w\}$, the result follows by substituting \eqref{EInequality1} and \eqref{EInequality2} respectively into the inequalities of Propositions \ref{prop: cond-disc ineq cdc version} and \ref{cor: exp-disc ineq cedc version}.
    
    Now consider the case $2 \leq \gcd(b,n) < n$. Let $\Gamma$ denote the special fiber of $\Y_0$, let $\Gamma_j'$ be its strict transform under $\Y_j\to\Y_0$, and let $e \colonequals \frac{n}{\gcd{(b,n)}}$ be its ramification index. Then $2\leq e\leq\frac{n}{2}$.
    
    Suppose $j=0$, so that $\Y_j=\Y_0=\P^1_{\O_K}$ satisfies assumption (ii) of \Cref{lem:resolveinorder}. Since $X$ is connected of genus $g\geq1$, the divisor $D_0$ contains an irreducible horizontal component $\Delta$ of ramification index $e'\neq e$, which intersects $\Gamma$ at a point $y$. Since $y$ is a singular point of $D_0$, it follows from assumption (ii) of \Cref{lem:resolveinorder} that $y\in\{z_1,...,z_w\}$ and $\Delta$ is regular. Since $\Gamma$ is regular, Proposition \ref{prop: error term cancellation} implies \[\cedc(y)\geq \cdc(y)\geq 2\frac{n}{e}-2=2\gcd{(b,n)}-2\]
    If we take $A=\{z_1,...,z_w\}\setminus\{y\}$, the result follows by substituting \eqref{EInequality1} and \eqref{EInequality2} into the inequalities of \Cref{prop: cond-disc ineq cdc version} and \Cref{cor: exp-disc ineq cedc version} respectively, and pairing off the terms $\cdc(y)$ and $\cedc(y)$ of \eqref{EInequality1} and \eqref{EInequality2} with the term $2-2\gcd(b,n)$ in \Cref{prop: cond-disc ineq cdc version} and \Cref{cor: exp-disc ineq cedc version}.
    
    Suppose instead $j>0$. Then there is a point $y\in \Y_j$ at which $\Gamma_j'$ intersects the exceptional divisor $E$ of $\Y_j\to\Y_0$. Since the regular model $\Y_j$ of $\P^1_K$ has simple normal crossings by Lemma \ref{Lnormalcrossings} and since $\Gamma_j'$ and $E$ are vertical divisors on $\Y_j$, then $y$ is the intersection point of $\Gamma_j'$ with exactly one irreducible component of $E$, which is the strict transform $(E_k)'_j$ on $\Y_j$ of the exceptional divisor $E_k$ of the blowup $\Y_k\to\Y_{k-1}$ for some $1\leq k\leq j$.
    
    If $E_k$ is ramified, then $y$ is a simple normal crossings point of $D_j$ of multiplicity $2$, hence $y\in\{z_1,...,z_w\}$, and by Proposition \ref{prop: error term cancellation}, \[\cedc(y)\geq \cdc(y)\geq 2\frac{n}{e}-2=2\gcd{(b,n)}-2\]
    If we take $A=\{z_1,...,z_w\}\setminus\{y\}$, the result follows as in the previous case.

    Finally, if $E_k$ is unramified, then $n\mid\nu_k$ and $\mu_k\geq 3$, so that the blowup $\Y_k\to\Y_{k-1}$ contributes a term of \[-n+(n-1)\mu_k(\mu_k-1)\geq 5n-6> 2\frac{n}{2}-2\geq 2\gcd(b,n)-2\] to $\cdc(\Y_m\to...\to\Y_0,F)$. We can then improve inequalities \ref{EInequality1} and \ref{EInequality2} to
    \begin{equation}\label{EInequality3}
    \begin{aligned}
    \cdc(\Y_m\to...\to\Y_0,F)\geq2\gcd(b,n)-2+\sum_{i=1}^w\cdc(z_i).
    \end{aligned}
    \end{equation}
    and
    \begin{equation}\label{EInequality4}
    \begin{aligned}
      \cedc(\Y_m\to...\to\Y_0,F)\geq2\gcd(b,n)-2+\sum_{i=1}^w\cedc(z_i).
    \end{aligned}
    \end{equation}
    If we take $A=\{z_1,...,z_w\}$, the result follows by substituting \eqref{EInequality3} and \eqref{EInequality4} respectively into the inequalities of \Cref{prop: cond-disc ineq cdc version} and \Cref{cor: exp-disc ineq cedc version}.
\end{proof}

We now apply Proposition \ref{prop: reduce to mult 2} and results of
\cite{Part2} to deduce the conductor exponent-discriminant
and
conductor-discriminant inequalities for $X$ (Corollary~\ref{Ccedi} = Corollary \ref{cor:
  exponent-discriminant ineq} and Theorem~\ref{Tcdi} = Corollary \ref{cor:
  conductor-discriminant inequality}). Recall that, in light of \eqref{Eliu}, the conductor exponent-discriminant inequality
is a consequence of the conductor-discriminant inequality. Since the
conductor exponent-discriminant inequality is of independent interest
and can be deduced without the full results of \cite{Part2}, we
first extract a simpler result from \cite{Part2} and prove
the conductor exponent-discriminant inequality directly.

\begin{prop}[{\cite[Theorem 1.6]{Part2}}]\label{prop: non-negativity for cedc}

Let $y$ be as in Notation \ref{NSection10}. Then $\cedc(y)\geq0$.
\end{prop}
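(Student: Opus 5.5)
We will prove $\cedc(y)\geq 0$ by analysing the local $n$-resolution $\Y_m\to\dots\to\Y_0$ of $D$ at $y$ explicitly and checking the sign of each term in Definition~\ref{dfn: cdc of sequence}.

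\textbf{Term-by-term signs.} For each blowup $\Y_k\to\Y_{k-1}$ the center $y_k$ lies over $y$, so $\mu_k\le 2$. Indeed, Lemma~\ref{lemma: compatible resolutions}(iii) and the fact that multiplicity does not increase under strict transform show this. If $\mu_k=2$:
\begin{itemize}
\item when $n\nmid\nu_k$, the blowup term is $1+n-2\gcd(\nu_k,n)-2(n-1) = 3-n-2\gcd(\nu_k,n)$, which is negative;
\item when $n\mid\nu_k$, the term is $1-n+2(n-1)=n-1>0$.
\end{itemize}
At the terminal stage, each multiplicity $2$ normal crossings point $z_i$ over $y$ contributes
\[
\frac{n}{e_i}+\frac{n}{e_i'}-2+n-\gcd\!\left(\tfrac{n}{e_i},\tfrac{n}{e_i'}\right)\geq 0 .
\]
So the task is to show that the crossing contributions at the end, together with the positive terms from unramified exceptional divisors, dominate the negative terms from ramified ones.

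\textbf{Reduction to a normal form.} First reduce to a local normal form. Since $y$ has multiplicity $2$ in $D$, there are two cases:
\begin{itemize}
\item $y$ lies on two regular branches meeting with some tangency order $j$;
\item $y$ is a cusp-like point on a single unibranch or bibranch component.
\end{itemize}
Write $F_y = a\Gamma_1 + a'\Gamma_2$, or $a\Gamma$ in the single-component case, in completed local coordinates. After each blowup, record the multiplicity $\nu_k=\ord_{E_k}(f)$ by \cite[Proposition~9.2.23]{Liubook}, for instance $\nu_k=\nu_{k-1}+a+a'$ along a tangency chain. Then track which $E_k$ are ramified, namely those with $n\nmid\nu_k$. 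This is Euclidean-algorithm-like bookkeeping in the orders, similar to Example~\ref{example: local n-resolution}.

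\textbf{Telescoping and the main obstacle.} The key step is to group each ramified blowup (negative term) with a subsequent gain. Such a gain is either an unramified blowup, contributing $n-1$, or the final crossing point, which involves the strict transforms and at least one exceptional $E_k$. Carrying this out gives a closed formula for $\cedc(y)$ in terms of $a,a',j,n$. The main obstacle is the tangent two-branch case with $j$ large. There the orders $\nu_k$ grow arithmetically, so ramified and unramified blowups interleave in a pattern governed by $\gcd$'s of $a+a'$ with $n$. I would first handle the case $\gcd(a+a',n)=1$, where roughly one in every $n$ blowups is unramified, and verify the sum over a full period is nonnegative. The general case would then follow by replacing $n$ with $n/\gcd(a+a',n)$ along the chain. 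The cusp case terminates in a bounded number of steps and is a direct check.
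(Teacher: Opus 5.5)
Your term-by-term sign analysis rests on the claim $\mu_k\le 2$, and that claim is false. By definition $\mu_k$ is the multiplicity at $y_k$ of $D_{k-1}=(F_{k-1}\bmod n)^{\red}$. This divisor contains every \emph{ramified exceptional curve}, and the fact that strict-transform multiplicity does not increase says nothing about those curves.

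Concretely, take $n=3$, $y=(x,\pi_K)$ on $\P^1_{\mathcal{O}_K}$, and $F_y=\Gamma_1+\Gamma_2$ with $\Gamma_1=\{x=0\}$ and $\Gamma_2=\{x=\pi_K^2\}$. Then $\nu_1=2$, so $E_1$ is ramified. In the chart $x=\pi_K x_1$, the three regular curves $\{x_1=0\}$, $\{x_1=\pi_K\}$ and $E_1=\{\pi_K=0\}$ pass pairwise transversally through $y_2$, so $\mu_2=3$. More generally, along the chain separating two tangent branches, $\mu_k=3$ for every $k\ge 2$ such that $E_{k-1}$ is ramified. Such steps contribute $1+n-2\gcd(\nu_k,n)\ge 1$ (or $5n-5$), not $3-n-2\gcd(\nu_k,n)$. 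So you have misidentified where the negative terms are.

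You also undercount the crossing terms. The last sum in $\cedc$ runs over \emph{all} multiplicity-$2$ points of $D_m$ over $y$. This includes the nodes $E_k'\cap E_{k+1}'$ between consecutive ramified exceptional curves, not just ``the final crossing point'' involving the strict transforms, and these nodes supply much of the positivity.

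The error is not cosmetic. Take $n=5$ and $\Gamma_2=\{x=\pi_K^4\}$ instead. Then $\nu_k=2k$ for $k=1,\dots,4$, all four blowups are ramified, and $m=4$. Your accounting gives $4\cdot(-4)=-16$. Even crediting all three crossings on $E_4$ gives only $3\cdot 4=12$, so your total is negative. The actual value is $-4+3\cdot 4+5\cdot 4=28$: the first blowup gives $-4$, the three blowups with $\mu_k=3$ give $4$ each, and there are five crossings of $4$ each.

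Several other steps are wrong or unsupported.
\begin{enumerate}
\item The cusp case does not terminate in a bounded number of steps. For $x^2=\pi_K^{2r+1}$ the strict transform is still singular after each of the first $r-1$ blowups, so the local $n$-resolution needs at least $r$ blowups, with $\nu_k=2ka$ along the way. This is an arbitrarily long chain, just like the tangency case.
\item Replacing $n$ by $n/\gcd(a+a',n)$ is not a legitimate reduction, because the summands depend on $n$ itself, through $n-2\gcd(\nu_k,n)$, $(n-1)\mu_k(\mu_k-3)$ and $n-\gcd(n/e,n/e')$.
\item Normal forms obtained by completing the square are unavailable in residue characteristic $2$, which is allowed for odd $n$. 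You could avoid normal forms entirely, since only the multiplicity sequence and the positions of the centers enter $\cedc$.
\item The decisive computation is asserted rather than carried out. This includes the effect of the termination rule, which can stop before the embedded resolution (cf.\ Example~\ref{example: local n-resolution}) and so determines which $\mu_k$ and which crossings actually occur.
\end{enumerate}

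The paper does not prove this statement itself. It quotes \cite[Theorem~1.6]{Part2}, which rests on an explicit description of the local $n$-resolution of an arbitrary multiplicity-$2$ point \cite[Section~3]{Part2}, followed by summation. Your plan has that overall shape, but as written the bookkeeping is incorrect and the verification is missing.
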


\begin{corollary}[= Corollary~\ref{Ccedi}]\label{cor: exponent-discriminant ineq}
The conductor exponent-discriminant inequality holds for
$f$. That is
    \[\phi(X/K)\leq (n-1)\nu_K(\disc_n(f)).\]
\end{corollary}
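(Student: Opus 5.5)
The plan is to combine the reductions already in place with the nonnegativity result quoted from \cite{Part2}. First, by \S\ref{Salgclosedreduction} and Proposition~\ref{Preduce2}, we may assume that $k$ is algebraically closed and that $f$ satisfies Assumption~\ref{Afform}. Both reductions preserve $\phi(X/K)$ and $\nu_K(\disc_n(f))$. Indeed, $\phi(X/K)$ is insensitive to unramified base change, and the coordinate changes in $\GL_2(\mc{O}_K)$ used in \S\ref{Sfurtherredn} change neither the curve nor $\nu_K(\disc_n(f))$.

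Next, take $\Y_0 = \P^1_{\O_K}$ and build the sequence $\Y_m \to \cdots \to \Y_j \to \cdots \to \Y_0$ as described before Proposition~\ref{prop: reduce to mult 2}. This means first blowing up the points of multiplicity $\geq 3$ in the branch divisor, as in Lemma~\ref{lem:resolveinorder}. Then, one point at a time, we perform the local $n$-resolution at each of the remaining multiplicity $2$ points $z_1,\ldots,z_w$; each of these terminates by Lemma~\ref{lemma: local resolution is finite}. Proposition~\ref{prop: reduce to mult 2} then supplies a subset $A\subseteq\{z_1,\ldots,z_w\}$ with
\[(n-1)\nu_K(\disc_n(f))-\phi(X/K)\geq\sum_{z\in A}\cedc(z).\]

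Each $z\in A$ is a multiplicity $2$ point of the branch divisor on the corresponding intermediate model, so it fits the setup of Notation~\ref{NSection10}, and Proposition~\ref{prop: non-negativity for cedc} gives $\cedc(z)\geq 0$. Summing over $A$ shows the right-hand side is nonnegative, which is the claimed inequality. There is also a degenerate case: if $D_0$ has no singular points, then $\nu_K(\disc_n(f))=0$. Lemma~\ref{Lcollisions} then shows that $\X_0$ is smooth, so $\phi(X/K)=0$; this case is also covered formally by the empty-sum version of the same argument.

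The step that needs the most care is checking that each $z_i$ really satisfies the hypotheses of Notation~\ref{NSection10}. The $\cedc(z_i)$ appearing in Proposition~\ref{prop: reduce to mult 2} is computed on the model $\Y_{j+i-1}$, not on $\P^1_{\O_K}$. So one must confirm that this model is a regular model of $\P^1_K$, which holds because it is obtained by point blowups, and that the unique point over $z_i$ is still a multiplicity $2$ point of $D_{j+i-1}$. The latter holds because $\Y_{j+i-1}\to\Y_j$ is an isomorphism near the preimage of $z_i$. The genuinely hard input, the nonnegativity of $\cedc$ at a multiplicity $2$ point, is carried entirely by \cite[Theorem 1.6]{Part2}, so beyond this bookkeeping the proof is a direct assembly.
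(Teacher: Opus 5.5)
Your argument is correct and is the same as the paper's, which deduces the corollary directly from Proposition~\ref{prop: reduce to mult 2} together with the nonnegativity $\cedc(z)\geq 0$ of Proposition~\ref{prop: non-negativity for cedc}; your check that each $z_i$ is a multiplicity $2$ point on the intermediate model $\Y_{j+i-1}$ is left implicit in the paper. One small correction to your aside on the degenerate case: a regular $D_0$ does not force $\nu_K(\disc_n(f))=0$ (for example, a horizontal component cut out by $x^2-\pi_K$ is regular but contributes to the discriminant), but this does not affect the proof, since the empty-sum case of your main argument already covers it.
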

\begin{proof}
    This follows immediately from Propositions \ref{prop: reduce to mult 2} and \ref{prop: non-negativity for cedc}.
\end{proof}

Unfortunately, the analog of Proposition~\ref{prop: non-negativity for
  cedc} with $\cdc$ instead of $\cedc$ does not always hold, i.e., it is possible to have $\cdc(y) < 0$ for some $y$.
But whenever this happens, it turns out that, after blowing up to
obtain a normal crossings branch locus and then taking
Hirzebruch--Jung resolutions of the tame cyclic quotient singularities, one
can blow down sufficiently many $-1$-components to salvage the
conductor-discriminant inequality.  The proposition below gives the
main result we need.

Let $\Y$ be a regular model of $\proj^1_K$.
Let $\X$ be the normalization of $\Y$ in $K(X)$, let $D$ be the
branch divisor of $\X \rightarrow \Y$, and let $y \in \Y$ be a
multiplicity 2 point of $D$. Let $\Y_m\to...\to\Y_0 = \Y$ be the local
$n$-resolution of $D$ at $y$, and  let $\X_m$ be the normalization of
$\Y_m$ in $K(X)$. Let $\X'\to\X$ and $\X_m'\to\X_m$ be the minimal resolutions of $\X$ and $\X_m$ respectively.

\begin{prop}[{\cite[Theorem 1.7]{Part2}}]\label{prop: non-negativity of cdc and existence of contractions}
Assume $\cdc(y)<0$. Then the canonical morphism $\pi: \X_m' \to\X'$ contracts at least $-\cdc(y)$ irreducible components of the special fiber $(\X_{m}')_s$ contained in the preimage of $y$ on $\X_{m}'$.
\end{prop}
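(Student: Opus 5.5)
The statement is quoted from \cite[Theorem 1.7]{Part2}, so here I only sketch how I would prove it directly. The strategy has three parts: make the local $n$-resolution explicit, compute $\cdc(y)$ term by term, and exhibit enough $(-1)$-curves in $(\X_m')_s$ that are contracted by $\X_m'\to\X'$.

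\textbf{Step 1 (normal form of $y$).} Since $y$ has multiplicity $2$ on $D$, either $y$ lies on two regular components of $D$ or on a single singular one. By Proposition~\ref{prop: error term cancellation}, if $y$ is a simple normal crossings point, or if the two ramification indices differ, then $\cdc(y)\ge 2(n/e)-2\ge 0$, so the hypothesis $\cdc(y)<0$ fails. So I may assume one of two cases:
\begin{itemize}
\item $y$ is a point of non-transverse tangency of two components with the same ramification index $e$;
\item $y$ is a cusp or node on one component.
\end{itemize}
Completing, I would write $\hat{\O}_{\Y,y}\cong \O_K[[u,v]]/(\pi_K-\beta u^av^b)$ and $F_y$ locally as $s^{\nu}$ with $s=v^2-u^{c}\cdot(\text{unit})$, or as $s_1^{\nu}s_2^{\nu}$ with contact order $c$. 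Following the model of Example~\ref{example: local n-resolution}, the local $n$-resolution is then a chain of blowups at the unique point on the strict transform. Its multiplicities $\nu_k$ are governed by a Euclidean-type algorithm on $(c,\nu,a,b)$, and $\mu_k\in\{2,3\}$ throughout.

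\textbf{Step 2 (compute $\cdc(y)$).} I would plug the data $\nu_k,\mu_k$, the ramification indices $e_i,e_i'$ at the final normal crossings points $z_i$, and $\HJL(z_i)$ into Definition~\ref{dfn: cdc of sequence}. The $\HJL(z_i)$ come from the continued fraction of the type in Proposition~\ref{prop: structure of points upstairs}(iii). The blowups with $\mu_k=3$ and $n\nmid\nu_k$ contribute $n-2\gcd(\nu_k,n)\ge 0$. The blowups with $\mu_k=2$ contribute $-n+2(n-1)$ or $n-2\gcd-2(n-1)$. The final crossing terms contribute $n-\gcd\cdot(\HJL+1)$ plus bonuses. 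This pins down exactly which configurations give $\cdc(y)<0$: essentially those with many blowups where $n\nmid\nu_k$ with large $\gcd(\nu_k,n)$, or where $\HJL(z_i)$ is large.

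\textbf{Step 3 (contractible components).} For these configurations I would identify components of $(\X_m')_s$ over $y$ with self-intersection $-1$. These come from two sources:
\begin{itemize}
\item preimages of exceptional curves $E_k$ of the local resolution whose preimage in $\X_m$ splits into $\gcd(\nu_k,n)$ components;
\item ends of Hirzebruch--Jung chains adjacent to them.
\end{itemize}
To compute the self-intersections, I would use $(C,(\X_m')_s)=0$ together with the multiplicities computed as in the proof of Proposition~\ref{prop: resolution of tame cyclic quotient singularities}. Because $\X'$ is the \emph{minimal} resolution of $\X$ near $y$ and the map $\X_m'\to\X'$ is proper birational between regular models, $\X_m'\to\X'$ factors through iterated Castelnuovo contractions. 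Every contracted chain of $(-1)$-curves, together with the successively created $(-1)$-curves, lies over $y$. Alternatively, I could count directly: the number of contracted components equals (number of components of $(\X_m')_s$ over $y$) minus (number of exceptional components of the minimal resolution of $\X$ at the preimages of $y$). The latter is bounded using a Lipman-style description: for the rational double point cases $A_k$, $D_k$, $E_k$ it is read off from \cite{Lip69}, and otherwise from the number of irreducible components of the minimal resolution.

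The main obstacle is Step 3 in the tangential, equal-$e$ case. There the number of components over $y$ in $\X_m'$ must be compared with that in $\X'$, and I would first try the case $\X$ having a rational double point at $y$. In that case the exceptional divisor of $\X'$ over $y$ has a known number of components (the Milnor/Tjurina-type count), and the count of components of $\X_m'$ follows from Steps 1–2. The remaining non-rational cases I expect to handle by showing that every $(-1)$-curve produced is contracted, via a genus and adjunction argument on the chain.
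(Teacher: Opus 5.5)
Your outline has the right overall shape: explicit local $n$-resolution, term-by-term evaluation of $\cdc(y)$, then comparing the components of $(\X_m')_s$ over $y$ with those of the minimal resolution $\X'$. But it stops exactly where the content of \cite[Theorem~1.7]{Part2} lies.

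The missing idea is the one recorded in \Cref{remark: rational double points}, from \cite[Theorem~1.8]{Part2}: $\cdc(y)<0$ forces every point $x$ of $\X$ over $y$ to be a rational double point. Hence the part of $\X'$ over $y$ is a union of standard ADE configurations, and their number of components is read off from Lipman's list once you have an explicit equation for $\hat{\O}_{\X,x}$. Only with this does your ``count directly'' alternative become an argument. The number of contracted components is then the number of components of $(\X_m')_s$ over $y$ minus the ranks of these ADE diagrams. Here the components over $y$ are those of $(\X_m)_s$ lying over the exceptional curves of $\Y_m\to\Y$, plus $\gcd(n/e_i,n/e_i')\HJL(z_i)$ for each crossing $z_i$ over $y$. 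One must then check, configuration by configuration, that this difference is at least $-\cdc(y)$. \Cref{example: cdc} does exactly this: $5$ components against $4$ for $D_4$, with $\cdc(y)=-1$.

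Your Step~2 says the computation ``pins down exactly which configurations give $\cdc(y)<0$'' but never carries it out, and that classification is what you need to identify the singularities over $y$. In Step~3 you instead treat rational double points as a first case and defer ``the remaining non-rational cases'', which under the hypothesis never arise.

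The fallback you propose for those cases would not work anyway. That a $(-1)$-curve of $(\X_m')_s$ over $y$ is contracted by $\X_m'\to\X'$ is automatic. If its image were a curve, that image would have self-intersection $\geq -1$. By negative definiteness of the exceptional locus of $\X'\to\X$ it would then be exactly $-1$, with no blowup center on it. It would thus be a $(-1)$-curve in a fiber of the minimal desingularization, which is impossible. So a genus/adjunction argument adds nothing here, and it says nothing about \emph{how many} components are contracted. The statement needs a lower bound of $-\cdc(y)$. Nothing in your proposal connects $\cdc(y)$, a formula in the $\nu_k,\mu_k,e_i,\HJL(z_i)$, either to the self-intersections in $(\X_m')_s$ or to the size of $\X'$ over $y$. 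The first would let you exhibit enough $(-1)$-curves, including those created by successive contractions; the second would give the count directly.

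Two smaller points in Step~1:
\begin{itemize}
\item For the two components $D_1,D_2$ of $D$ through $y$, equal ramification indices only give $\gcd(\ord_{D_1}(f),n)=\gcd(\ord_{D_2}(f),n)$, not equal orders. For example, $n=5$ with orders $1$ and $2$ cannot be made equal by changing the Kummer generator. The individual orders enter the $\nu_k$, hence which $E_k$ are branch components, and the types and $\HJL(z_i)$ at the final crossings. So $s_1^{\nu}s_2^{\nu}$ is not a normal form.
\item Rewriting a double point as $v^2-u^c\cdot(\text{unit})$ requires $2$ to be invertible, whereas residue characteristic $2$ is allowed when $n$ is odd.
\end{itemize}
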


\begin{remark}\label{remark: rational double points}
    The condition $\cdc(y)<0$ is only satisfied under very
    restrictive conditions on the degree of $f$, the orders of $f$
    along the components of $D$ containing $y$, and the length of
    the local $n$-resolution of $D$ at $y$. In fact, by
    \cite[Theorem 1.8]{Part2}, if $\cdc(y)<0$, then for any
    point $x\in \X$ lying over $y$, the scheme
    $\Spec\hat{\O}_{\X,x}$ is a rational double point in the
    sense of \cite[Section 24]{Lip69}, and the morphism $\X'\to\X$
    includes the standard ADE resolutions of all such points. The
    proof of Proposition~\ref{prop: non-negativity of cdc and existence of contractions} is technically involved and takes the bulk of
    \cite{Part2} --- the proof of Proposition~\ref{prop:
      non-negativity for cedc} is comparatively much simpler.
\end{remark}

\begin{example}\label{example: cdc} 
    Resume the notation of Example \ref{example: local n-resolution}. Recall that
    \begin{align*}
     &F_0=F_0'=F_y=D, &(F_0\tx{ mod }3)^\tx{red}=F_0', \\
     &F_1=F_1'+2E_1, &(F_1\tx{ mod }3)^\tx{red}=F_1'+E_1, \\
     &F_2=F_2'+2E_1'+3E_2, &(F_2\tx{ mod }3)^\tx{red}=F_2'+E_1'.
    \end{align*} 
    We compute $\cdc(y):=\cdc(\Y_2\to\Y_1\to\Y_0,D)$. In the notation of \Cref{dfn: cdc of sequence}, we have
    \begin{align*}
     &\nu_1 = 2, &\mu_1=2, \\
     &\nu_2=3, &\mu_2=2,\\
     &\ord_f(F_2') = 1,  &\ord_f(E_1') = 2.
    \end{align*}
Let $z_1$ be the multiplicity $2$ point of the branch divisor on $\mc{Y}_2$ that is the intersection of the two branch components $F_2'$ and $E_1'$. By \Cref{prop: structure of points upstairs}~\ref{prop:whatappears}, it follows that the associated ramification indices for the crossing at $z_1$ are $e_1=e_1'=3$, and 
\[\tx{HJL}(z_1)=L \left(3,-\left(\frac{1}{\gcd(1,3)} \right)^{-1}\cdot\frac{2}{\gcd(2,3)} \right)=L(3,1)=1.\]

    Plugging all these invariants into \Cref{dfn: cdc of a point}, we get 
    \begin{multline*} \cdc(y)=3-2\gcd(\nu_1,3)+(3-1)\mu_1(\mu_1-3)-3+(3-1)\mu_2(\mu_2-1)+\\
    \left(\frac{3}{e_1}-1 \right)+\left(\frac{3}{e_1'}-1 \right)+\left(3-\gcd \left(\frac{3}{e_1},\frac{3}{e_1'} \right)(\tx{HJL}(z_1)+1) \right) = -1. \end{multline*}
    It remains to show if $\mc{X}_2' \rightarrow \mc{X}_2$ is the minimal resolution of the normalization $\mc{X}_2$ of $\mc{Y}_2$, then the special fiber $(\X_2')_s$ has at least one contractible component. The reader may refer to the proof of \cite[Proposition 6.1]{Part2} in the case when $n=e_0=3$ and $j=1$ for details; we simply summarize the results here. There are four irreducible components in $\mc{X}_2$ lying above the point $y$, of which three lie above the component $E_2$ in $\mc{Y}_2$, and one lies above the component $E_1'$ in $\mc{Y}_2$. Since $\HJL(z_1) = 1$, the minimal resolution of the unique singular point above $z_1$ adds another component, bringing the total number of components lying above $y$ in $\mc{X}_2'$ to five. 
     
     On the other hand, the branch divisor $D$ of the normalization $\X\to\Y$ is totally ramified, hence $\X$ contains a unique point $x$ lying above $y$, for which $\hat{\O}_{\X,x}\cong \O_K[[x,w]]/(w^3-x^3-\pi^2)$. By Case III C. of \cite[Section 24]{Lip69} and the subsequent remark beginning with "Conversely...", we see $\Spec \hat{\O}_{\X,x}$ is a rational double point admitting a $D_4$ minimal resolution. In particular, if $\X'\to\X$ is the minimal resolution of $\X$, the preimage of $y$ on $\X'$ contains four irreducible components of the special fiber $\X'_s$. It follows that the canonical morphism $\mc{X}_2'\to\mc{X}'$ contracts $1=-\cdc(y)$ irreducible component of the special fiber $(\X_2)'_s$ contained in the preimage of $y$.    
\end{example}

\begin{prop}\label{prop: difference in Artin conductors}
Keep the notation of Proposition \ref{prop: reduce to mult 2}. Let $\X_{\min}$ be the minimal proper regular $\O_K$-model of $X$. Then
\[\Art^+(\X_m')-\Art^+(\X_{\min})\geq
  -\sum_{z\in A:\cdc(z)<0}\cdc(z).\] 
\end{prop}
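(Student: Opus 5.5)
We use Liu's formula \eqref{Eliu}. Since $\X_m'$ and $\X_{\min}$ are both proper regular models of $X$ and $\phi(X/K)$ depends only on $X$, we get
\[\Art^+(\X_m')-\Art^+(\X_{\min}) = N(\X_m') - N(\X_{\min}),\]
where $N(\cdot)$ is the number of irreducible components of the special fiber. So it suffices to show that the canonical morphism $\X_m'\to\X_{\min}$ contracts at least $\sum_{z\in A:\cdc(z)<0}(-\cdc(z))$ irreducible components. This morphism exists because $g\geq 1$ and $\X_{\min}$ is obtained from any regular model by successively contracting $(-1)$-curves.

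The idea is to localize at each $z_i$ with $\cdc(z_i)<0$ and apply Proposition~\ref{prop: non-negativity of cdc and existence of contractions}. Fix such a $z=z_i\in A$. Let $\Y_{j+i-1}$ be the stage just before the local $n$-resolution at $z$, with normalization $\X_{j+i-1}$ and minimal resolution $\X_{j+i-1}'$. By construction, the later steps $\Y_m\to\Y_{j+i}$ are isomorphisms over a neighbourhood of the preimage of $z$, since they only blow up points over the other $z_{i'}$. The same holds for their normalizations and minimal resolutions, because minimal resolution is local on isolated singularities. Hence, over a neighbourhood of the preimage of $z$, the morphism $\X_m'\to\X_{j+i-1}'$ looks exactly like the morphism $\pi$ of Proposition~\ref{prop: non-negativity of cdc and existence of contractions} applied with $\Y=\Y_{j+i-1}$, $y=z$. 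That proposition therefore provides at least $-\cdc(z)$ irreducible components of $(\X_m')_s$ lying over $z$ that are contracted in $\X_{j+i-1}'$. Components over distinct $z$'s are distinct, since their images in $\Y_j$ are distinct points.

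Next, let $\X_j'$ be the minimal resolution of $\X_j$. We check that the composite $\X_m'\to\X_j'$ is a proper birational morphism of regular models; note that $\X_{j+i-1}'$ maps to $\X_j'$. Any component contracted in some $\X_{j+i-1}'$ is then also contracted in $\X_j'$, and a fortiori in $\X_{\min}$, since $\X_j'\to\X_{\min}$ exists by minimality. Summing over $z\in A$ with $\cdc(z)<0$ yields
\[N(\X_m')-N(\X_{\min})\geq \sum_{z\in A:\,\cdc(z)<0}(-\cdc(z)),\]
which is the claim.

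The main obstacle is the localization step: checking that the ``minimal resolutions'' $\X'$ in Proposition~\ref{prop: non-negativity of cdc and existence of contractions} for the intermediate model $\Y_{j+i-1}$ match ours locally over $z$. This needs two facts. First, normalization commutes with restricting to an open neighbourhood of $z$. Second, the minimal resolution of a normal surface with isolated singularities is obtained by resolving each singular point independently (cf.\ Remark~\ref{rmk: bound on HJ length of point} and \cite[Proposition~3.5]{ObusWewers}). We also need that a component contracted in an intermediate regular model stays contracted further down; this holds because the morphisms $\X_m'\to\X_{j+i-1}'\to\X_{\min}$ are compatible birational maps that are the identity on the generic fiber.
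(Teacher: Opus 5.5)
Your proposal is correct and follows the paper's argument. You use \eqref{Eliu} to reduce to counting the components contracted by the canonical morphism $\X_m'\to\X_{\min}$, then get at least $-\cdc(z)$ such components over each $z\in A$ with $\cdc(z)<0$ from Proposition~\ref{prop: non-negativity of cdc and existence of contractions}; your localization through the intermediate models $\X_{j+i-1}'$ just makes explicit what the paper's one-line application of that proposition leaves implicit.
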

\begin{proof}
    Let $\pi:\X_m'\to\X_{\min}$ be the canonical contraction, and let $N_m'$ and $N_{\min}$ denote the numbers of irreducible components of the (geometric) special fibers $(\X_m')_s$ and $(\X_{\min})_s$ respectively. By Proposition \ref{prop: non-negativity of cdc and existence of contractions}, for each $z\in A$ with $\cdc(z)<0$, the map $\pi$ contracts at least $-\cdc(z)$ irreducible components of $(\X_m')_s$ contained in the preimage of $z_i$ on $\X_m'$. Since $\pi$ is a composition of blow-downs of components of the special fiber and so creates no new components in the special fiber, we obtain 
    \[N_m'-N_{\min}\geq -\sum_{z\in A:\cdc(z)<0}\cdc(z).\]
    The result then follows from \eqref{Eliu}. 
\end{proof}
    
\begin{corollary}[= Theorem~\ref{Tcdi}]\label{cor: conductor-discriminant inequality}
    The conductor-discriminant inequality holds for $f$, i.e.,
    \[\Art^+(\X_{\min})\leq (n-1)\nu_K(\disc_n(f)).\]
\end{corollary}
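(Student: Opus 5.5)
We combine Proposition~\ref{prop: reduce to mult 2} with Proposition~\ref{prop: difference in Artin conductors}. Since \Cref{Preduce2} and \S\ref{Salgclosedreduction} allow us to assume \Cref{Afform} and that $k$ is algebraically closed, the models $\Y_m$, $\X_m$, $\X_m'$ and the subset $A\subseteq\{z_1,\dots,z_w\}$ from Proposition~\ref{prop: reduce to mult 2} are available. That proposition gives
\[(n-1)\nu_K(\disc_n(f))-\Art^+(\X_m')\geq \sum_{z\in A}\cdc(z).\]

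Next we add the inequality of Proposition~\ref{prop: difference in Artin conductors},
\[\Art^+(\X_m')-\Art^+(\X_{\min})\geq -\sum_{z\in A:\cdc(z)<0}\cdc(z).\]
The sum gives
\[(n-1)\nu_K(\disc_n(f))-\Art^+(\X_{\min})\geq \sum_{z\in A}\cdc(z)-\sum_{z\in A:\cdc(z)<0}\cdc(z)=\sum_{z\in A:\cdc(z)\geq 0}\cdc(z)\geq 0.\]
This is the claimed inequality. Theorem~\ref{Tcdi} then follows by taking $\mc{X}_f=\X_{\min}$, or equally $\X_m'$ when all $\cdc$ values are nonnegative. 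By \Cref{Preduce2} it holds for every $f$, and the unramified descent of \S\ref{Salgclosedreduction} removes the assumption on $k$.

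The only delicate point is that Proposition~\ref{prop: non-negativity of cdc and existence of contractions} is stated for a single multiplicity $2$ point $y$ on a model $\Y$, whereas we apply it to each $z_i$ inside the iterated construction of $\Y_m$. So we must check that the components it contracts for different $z\in A$ are distinct. This holds because they lie over distinct points $z_i$ of $\Y_j$, and the construction of the $\Y_{j+i}$ is an isomorphism away from the preimages of the points being resolved. We also need the contractions in the local statement to be compatible with the global contraction $\X_m'\to\X_{\min}$. This is automatic: the contraction $\X_m'\to\X'$ of the local statement is local over $z_i$, and any component contracted in a minimal resolution of a neighbourhood of $z_i$ is also contracted in the global minimal model, since $\X_{\min}$ is obtained from $\X_m'$ by successive contractions of $(-1)$-curves. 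This is exactly the argument already carried out in the proof of Proposition~\ref{prop: difference in Artin conductors}, so the corollary reduces to the two displayed inequalities above.
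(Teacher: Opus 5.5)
Your proposal is correct and is exactly the paper's argument: adding the inequalities of Propositions~\ref{prop: reduce to mult 2} and~\ref{prop: difference in Artin conductors} cancels the negative $\cdc$ terms and leaves $\sum_{z\in A:\cdc(z)\geq 0}\cdc(z)\geq 0$. The compatibility issues in your last paragraph are already handled inside the proof of Proposition~\ref{prop: difference in Artin conductors}, so, as you say, nothing further is needed for the corollary.
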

\begin{proof}
    This follows immediately from Propositions \ref{prop: reduce to mult 2} and \ref{prop: difference in Artin conductors}.
\end{proof}

\end{document}